\theoremstyle{plain}
\newtheorem{theorem}{Theorem}[section]
\newtheorem{lemma}[theorem]{Lemma}
\newtheorem{prop}[theorem]{Proposition}
\newtheorem{propd}[theorem]{Proposition and Definition}
\newtheorem{cor}[theorem]{Corollary}
\theoremstyle{definition}
\newtheorem{defn}[theorem]{Definition}
\newtheorem{rmk}[theorem]{Remark}
\newtheorem{nota}[theorem]{Notation}
\def\trianglelefteqslant{\ThisStyle{\mathrel{%
  \stackinset{r}{.75pt+.15\LMpt}{t}{.1\LMpt}{\rule{.3pt}{1.1\LMex+.2ex}}{\SavedStyle\leqslant}%
}}}
\def\sS{\mathcal{S}}
\def\bZ{\mathbb{Z}}
\def\bF{\mathbb{F}}
\def\Id{\mathrm{Id}}
\def\Inf{\mathrm{Inf}}
\def\Def{\mathrm{Def}}
\def\dBInf{\mathrm{dBInf}}
\def\Iso{\mathrm{Iso}}
\def\dBIso{\mathrm{dBIso}}
\title{Orthogonal Units of the Double Burnside Ring}
\author{Jamison Barsotti}
\address{Department of Mathematics,
The College of William \& Mary}
\email{jbbarsott@wm.edu}
\begin{document}

\begin{abstract}
Given a finite group $G$, its double Burnside ring $B(G,G)$, has a natural duality operation that arises from considering opposite $(G,G)$-bisets. 
In this article, we systematically study the subgroup of units of $B(G,G)$, where elements are inverse to their dual, so called \emph{orthogonal units}. We show the existence of an
inflation map that embeds the group of orthogonal units of $B(G/N,G/N)$ into the group of orthogonal units of $B(G,G)$, when $N$ is a normal subgroup of $G$,
and study some properties and consequences. In particular, we use these maps to determine the orthogonal units of $B(G,G)$, when $G$ is a cyclic $p$-group, and $p$ is an odd prime.
\end{abstract}

\maketitle
{\bf Keywords:} Burnside ring, double Burnside ring, biset, biset functor, orthogonal unit, rng morphism
\section{introduction}
The double Burnside ring of a finite group $G$, denoted $B(G,G)$, is an important and interesting invariant in 
the representation theory of finite groups. It is a central object in the study of biset functors, which has in turn answered important questions in representation theory of finite groups.
In particular, bisets functors were used in determining the unit group of the standard Burnside ring for $p$-groups (see \cite{BoucUnits}) and determining the Dade group of a a finite group
(see Chapter $12$ of \cite{BoucBook}). It has also been the subject of study in connection to group fusion and algebraic topology (see \cite{ragnarsson2013saturated}).

Unlike the usual Burnside ring, the double Burnside ring is non-commutative and does not seem to have a convenient, so called, \emph{ghost ring} (see Theorem~\ref{ghostRing}) that one can embed it into.
This has been the subject of much research (see \cite{boltje2012ghost}, \cite{boltje2013ghost}, and \cite{masterson2018table}) . The trade-off is that $B(G,G)$ carries much more data about the group $G$ than its standard Burnside ring, though it is more difficult to work with.

The goal of the article at hand is to begin a structural theory of orthogonal units of the double Burnside ring. That is, units $u\in B(G,G)^\times$ such that $uu^\circ=u^\circ u=\Id_G$, where $(-)^\circ$ is
the natural duality operator on $B(G,G)$ (see Proposition~\ref{dual}) and $\Id_G$ is the identity element of $B(G,G)$. 
These units form a group, denoted by $B_\circ(G,G)$. We offer three main results towards this goal:

First, we introduce inflation maps that embed units of double Burnside rings of quotient groups. If $N$ is a normal subgroup of $G$, we denote these group homomorphism by 
$\dBInf^G_{G/N}:B(G/N,G/N)^\times\to B(G,G)^\times$ (Proposition~\ref{dBInf}). We also define isomorphism maps and show these behave well with the inflation maps, in the biset sense. 
These maps restrict to maps between orthogonal unit groups.

Second, our first main theorem, {\bf Theorem~\ref{mainThm1}}, establishes 
the existence of a naturally occurring elementary abelian $2$-subgroup of orthogonal units of $B(G,G)$. These subgroups have a basis (in the sense of $\bF_2$ vector spaces) parametrized
using the normal subgroups of $G$.

Lastly, our second main theorem (listed below), begins the classification of these orthogonal unit groups for cyclic $p$-groups $C_{p^n}$, where $p$ is a prime. 
We complete this classification for odd primes, leaving only the cases where $p=2$ and $n>1$.

\begin{theorem}\label{main2}
Let $G$ be a cyclic $p$-group with $p$ a prime:
\begin{enumerate}[label=\roman*)]
\item If $G$ is trivial, then $B_\circ(G,G)\cong C_2$.
\item If $G=C_2$, then $B_\circ(G,G)\cong C_2\times D_8$.
\item If $p$ is odd and $|G|=p^n$, then
\[B_\circ(G,G)\cong \left\{\begin{array}{lll}
	C_2^{n+2}\times\prod_{i=1}^n\mathrm{Out}(C_{p^i}) &\mathrm{if}& p=3\\\\
	C_2^{n+1}\times\prod_{i=1}^n\mathrm{Out}(C_{p^i}) &\mathrm{if}& p>3
	\end{array}\right.\]
\end{enumerate}
\end{theorem}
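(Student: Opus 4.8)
The plan is to treat the three cases on a common footing: $(i)$ is immediate, $(ii)$ is a finite explicit computation, and $(iii)$ is an induction on $n$ in which the inflation maps $\dBInf$ provide the inductive bridge. For $(i)$, $B(1,1)\cong\bZ$ carries the trivial duality, so every unit is orthogonal and $B_\circ(1,1)=\{\pm\Id_1\}\cong C_2$. For $(ii)$ I would work inside the rank-$5$ ring $B(C_2,C_2)$, whose $\bZ$-basis is indexed by the five subgroups of $C_2\times C_2$: writing out the biset multiplication and the involution $(-)^\circ$ explicitly, one identifies the Theorem~\ref{mainThm1} units attached to the normal subgroups of $C_2$ (among them $-\Id_{C_2}$) together with the remaining ``exotic'' $2$-local units, and then checks directly that the group they generate — and indeed all of $B_\circ(C_2,C_2)$ — is $C_2\times D_8$. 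Here the $M_2(\bQ)$-block of $\bQ B(C_2,C_2)$ is what produces the dihedral factor, via $2\times2$ signed permutation matrices.

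For $(iii)$, fix an odd prime $p$ and write $C_{p^i}$ for the quotient of $C_{p^n}$ of order $p^i$. The lower bound comes from exhibiting explicit orthogonal units. First, the elementary abelian $2$-subgroup of Theorem~\ref{mainThm1}, indexed by the $n+1$ normal subgroups $1\trianglelefteqslant C_p\trianglelefteqslant\cdots\trianglelefteqslant C_{p^n}$, contributes a copy of $C_2^{n+1}$ (containing $-\Id_{C_{p^n}}$). Second, for each $1\le i\le n$ the isomorphism units $\dBIso$ built from $\mathrm{Out}(C_{p^i})$, transported into $B(C_{p^n},C_{p^n})$ by $\dBInf^{C_{p^n}}_{C_{p^i}}$ when $i<n$, contribute $\prod_{i=1}^n\mathrm{Out}(C_{p^i})$; the compatibility of $\dBInf$ with $\dBIso$ recorded after Proposition~\ref{dBInf} shows these pieces fit together correctly. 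Third, when $p=3$ there is one further orthogonal unit, with no analogue for $p>3$, supplying the extra $C_2$. Since $\mathrm{Out}(C_{p^i})$ is cyclic for $p$ odd and all the units above commute, the subgroup they generate is the asserted direct product once one verifies $\bF_2$-independence of the listed $2$-torsion generators modulo the $\mathrm{Out}$-part — and it is exactly at this independence check that the dichotomy between $p=3$ and $p>3$ surfaces.

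The upper bound is the substantive part. Because $B(C_{p^n},C_{p^n})$ admits no ghost ring (Theorem~\ref{ghostRing}), I would pass to $\bQ B(C_{p^n},C_{p^n})$, into whose group of $(-)^\circ$-orthogonal units $B_\circ(C_{p^n},C_{p^n})$ injects. Using the Goursat parametrization of subgroups of $C_{p^n}\times C_{p^n}$ — quintuples recording the two projections, the two kernels, and a twisting automorphism of the common cyclic quotient — one extracts from an orthogonal unit $u$ its ``diagonal part'', that is, its component along the twisted diagonals; this is a unitary unit of $\bZ[\mathrm{Out}(C_{p^n})]$, and since unitary units of integral group rings of abelian groups are torsion and torsion units of abelian group rings are trivial, it lies in $\pm\mathrm{Out}(C_{p^n})$. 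After multiplying $u$ by a suitable automorphism unit one reduces to orthogonal units supported on proper sections, which a deflation/restriction argument shows are accounted for by $\dBInf^{C_{p^n}}_{C_{p^{n-1}}}\bigl(B_\circ(C_{p^{n-1}},C_{p^{n-1}})\bigr)$ together with the Theorem~\ref{mainThm1} unit of the top normal subgroup; the single rational block that behaves anomalously at $p=3$ is tracked separately and yields the extra $C_2$. Feeding in the inductive hypothesis for $C_{p^{n-1}}$ and the base cases $n\le1$ then closes the argument.

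I expect the main obstacle to be controlling $B(C_{p^n},C_{p^n})$ as a concrete $\bZ$-order — not merely its rational hull — finely enough to run the ``peel off the diagonal part, then deflate'' step and, in particular, to see why the prime $3$ alone contributes an extra orthogonal unit while the excluded prime $2$ makes the unit group non-abelian; by comparison, the group-ring input on unitary units is routine. A more self-contained alternative would dispense with rationalization and argue entirely combinatorially within the Goursat basis of $B(C_{p^n},C_{p^n})$, at the cost of considerably heavier coefficient bookkeeping in the inductive step.
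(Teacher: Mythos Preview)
Your architecture matches the paper's: split off the diagonal $\pm\mathrm{Out}(C_{p^n})$ part via the quotient map $\rho$ to $\bZ\mathrm{Out}(C_{p^n})$ (the paper's Lemma~\ref{workhorse}), show that the semidirect product is actually direct for cyclic $G$ (Lemma~\ref{trivialAction}), prove that $\ker(\rho^\times)$ coincides with $\mathrm{im}(\dBInf^{C_{p^n}}_{C_{p^n}/C_p})$ (Proposition~\ref{inductiveCase}), and induct on $n$ with the cases $n\le 1$ as base (Proposition~\ref{baseCase}). So the skeleton is right.

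The substantive divergence is in how you propose to prove the inclusion $\ker(\rho^\times)\subseteq\mathrm{im}(\dBInf^{C_{p^n}}_{C_{p^n}/C_p})$. The paper does \emph{not} rationalize; it takes exactly the route you list as your ``more self-contained alternative''. Writing $u=\Id_G-\alpha$ with $\alpha\in I_G$, one must show that every coefficient of $\alpha$ on a Goursat subgroup $(a,b;c,d)_\varphi$ with $b=0$ or $d=0$ vanishes. The paper computes, for each $b$ descending from $n-1$ to $0$, the coefficient of $\Delta(C_{p^{b}})$ in $\alpha\alpha^\circ=\alpha+\alpha^\circ$: the left side is a positive-definite integer quadratic form in the relevant $a_X$'s with leading term $p^{n-b}a_{\Delta(C_{p^b})}^2$, the right side is $2a_{\Delta(C_{p^b})}$, and for $p$ odd this forces $a_{\Delta(C_{p^b})}=0$ and hence all those $a_X=0$. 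Your rationalization plan, as you yourself flag, does not supply this integrality input: $\bQ B(C_{p^n},C_{p^n})$ has plenty of $\circ$-orthogonal units supported on subgroups with trivial kernel, so a ``deflation/restriction'' argument at the rational level cannot by itself rule them out.

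Two further corrections. The $p=3$ anomaly in the paper lives entirely in the base case $n=1$: the same positivity computation for $C_p$ gives $|\ker(\rho^\times)|=4$ when $p=3$ versus $2$ when $p>3$, and Proposition~\ref{inductiveCase} is uniform in odd $p$ with no ``anomalous block'' tracked through the induction. And the paper's diagonal-part extraction does not go via unitary units of $\bZ[\mathrm{Out}(G)]$; it routes through the bifree subring $B^\Delta(G,G)$ and the Boltje--Perepelitsky result (Lemma~\ref{thing}), which is more direct than the torsion-unit machinery you invoke.
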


\section{preliminaries}
In this section, we pool together definitions, results, and notation common for the subject. We give direct reference where applicable, and brief proofs for a few of the results. However,
we remark that all the results in this sections can be found in \cite{BoucBook}, \cite{boltje2013ghost}, or \cite{boltje2015orthogonal}.
\subsection{The Burnside ring of a finite group}
Given a finite group $G$, the \emph{Burnside ring} of $G$, which we denote by $B(G)$, is defined to be the Grothendieck ring of isomorphism classes of finite (left) $G$-sets, with 
respect to the operations disjoint union and cartesian product. Recall that the Burnside ring is free as a $\bZ$-module with a basis given by the isomorphism classes of transitive 
$G$-sets. Further, the isomorphism classes of transitive $G$-sets can be parameterized by the subgroups of $G$, up to conjugation, by considering the corresponding coset space. 
So if we let $\sS_G$ be a set of representatives of the conjugacy classes of subgroups of $G$, then we get an explicit basis of $B(G)$, by considering 
$\displaystyle{\{[G/H]\}_{H\in \sS_G}}$, where $[G/H]$ indicates the class of the $G$-set $G/H$ in $B(G)$.

\subsection{The ghost ring of $B(G)$}

If $S$ is a subgroup of $G$ and $X$ is a $G$-set, we can consider the set of elements in $X$, that are fixed by $S$, which we denote by $X^S$. We will the notation $|X^S|$,
to denote the cardinality of this set. For any $g\in G$, we have $|X^{\,^gS}|=|X^S|$. It follows that this induces a ring homomorphism from $B(G)$ into $\bZ$. We
abusively denote the image of this map by $|a^S|$ for $a\in B(G)$. One of the most fundamental results about $B(G)$, due to Burnside, is that these
we can use these maps to embed $B(G)$ into a direct product of $\bZ$. This is called the \emph{ghost ring} of $B(G)$.

\begin{theorem}[Burnside]\label{burnsideTheorem}\label{ghostRing}
Let $G$ be a finite group and $\sS_G$ a set of representatives of conjugacy classes of subgroups of $G$. Then the map
\[B(G)\to \prod_{S\in \sS_G}\bZ\]
\[a\mapsto (|a^S|)_{s\in \bZ}\]
is an injective ring homomorphism with finite cokernel.

\end{theorem}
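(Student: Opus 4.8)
The plan is to analyze the map through the classical \emph{table of marks}. First I would check that it is a ring homomorphism: for a subgroup $S\le G$ and finite $G$-sets $X,Y$ one has $(X\sqcup Y)^S=X^S\sqcup Y^S$ and $(X\times Y)^S=X^S\times Y^S$, so $X\mapsto |X^S|$ is additive and multiplicative on isomorphism classes and therefore descends to a ring homomorphism $B(G)\to\bZ$; taking the product over $S\in\sS_G$ gives the map in the statement. Since $|X^{\,^gS}|=|X^S|$ for all $g\in G$, the map does not depend on the chosen representatives.

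For injectivity and finiteness of the cokernel I would express the map in the basis $\{[G/H]\}_{H\in\sS_G}$ of $B(G)$ and the standard basis of $\prod_{S\in\sS_G}\bZ$; this gives the square integer matrix $M=\bigl(|(G/H)^S|\bigr)_{H,S\in\sS_G}$. The crucial computation is that a coset $gH$ is fixed by $S$ if and only if $^{g^{-1}}S\le H$; hence $(G/H)^S\neq\emptyset$ exactly when $S$ is subconjugate to $H$, and in particular $|(G/H)^H|=|N_G(H):H|>0$. Ordering $\sS_G$ so as to refine the subconjugacy partial order (for instance by nondecreasing order of the subgroups) makes $M$ triangular with nonzero diagonal entries, so $\det M=\prod_{H\in\sS_G}|N_G(H):H|\neq 0$.

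From here everything is formal: a matrix with nonzero determinant has trivial kernel, so the map is injective; and since source and target are both free $\bZ$-modules of the same finite rank $|\sS_G|$, the cokernel $\bigl(\prod_{S\in\sS_G}\bZ\bigr)/M\bZ^{|\sS_G|}$ is finite, of order $|\det M|=\prod_{H\in\sS_G}|N_G(H):H|$.

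The main obstacle is the fixed-point count underlying triangularity — verifying $gH\in(G/H)^S\iff{}^{g^{-1}}S\le H$ and reading off from it both the vanishing of the entries "above the diagonal" and the exact diagonal value $|N_G(H):H|$ — together with the bookkeeping that subconjugacy really is a partial order on $\sS_G$ admitting a linear refinement. Once $M$ is known to be triangular with invertible diagonal, the conclusions are immediate.
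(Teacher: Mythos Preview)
Your argument is correct and is precisely the classical \emph{table of marks} proof of Burnside's theorem: the fixed-point computation $gH\in(G/H)^S\iff {}^{g^{-1}}S\le H$ indeed gives triangularity under any linear refinement of subconjugacy, with diagonal entries $|N_G(H):H|$, and the remaining linear-algebra conclusions follow exactly as you say.

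There is nothing to compare against, however: the paper does not supply its own proof of this theorem. It is recorded in the preliminaries as a classical result attributed to Burnside and used only for context (chiefly to note that $B(G)^\times$ is an elementary abelian $2$-group). So your write-up is not an alternative route but rather a proof where the paper gives none; as such it is entirely appropriate and would stand on its own.
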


An immediate corollary to the above fact is that the unit group of $B(G)$ is an elementary abelian $2$-group. However, determining $B(G)^\times$ in general is still very 
open, even for the case of solvable groups. Results and progress on this problem can be found in \cite{BoucUnits}, \cite{YoshidaUnits}, and \cite{Barsotti}.

\subsection{Bisets}

Given finite groups $G$ and $H$, a set $X$ equipped with a left $G$-action and a right $H$-action, such that $(g\cdot x)\cdot h=g\cdot (x\cdot h)$ is called a 
$(G,H)$-\emph{biset}. If we consider the Grothendieck group of finite $(G,H)$-bisets, with respect to disjoint union, these form a $\bZ$ module we denote by $B(G,H)$.
Note that $B(G,H)$ is canonically isomorphic, as a $\bZ$-module, to $B(G\times H)$, since there is a one-to-one correspondence between $(G,H)$-bisets and left $G\times H$-sets
Given by identifying the $(G,H)$ biset $X$ with the left $G\times H$-set $X$, together with the action $(g,h)\cdot x=gxh^{-1}$, for all $(g,h)\in G\times H$ and $x\in X$. 
Thus $B(G,H)$ has a basis given by $\displaystyle{\{[G\times H/L]\}_{L\in \sS_{G\times H}}}$. We recount some fundamental information about bisets.

Recall that a \emph{section} of $G$ is a pair of subgroups $(A,B)$ of $G$, with $B\trianglelefteqslant A$.
Goursat's Lemma gives us an important way to enumerate the standard basis of $B(G,H)$ in terms of sections of $G$ and sections of $H$. 
Given a subgroup $L\leqslant G\times H$, we define the \emph{first and second
projections} of $L$ by $P_{1}(L):=\{g\in G\,|\,(g,h)\in L\}$ and $P_{2(L)}:=\{h\in H\,|\,(g,h)\in L\}$. We also define the \emph{first and second kernels} of $L$ by
$K_{1}(L):=\{g\in G\,|\,(g,1)\in L\}$ and $K_{2}(L):=\{h\in H\,|\,(1,h)\in L\}$. We then have that $K_1(L)\trianglelefteqslant P_1(L)\leqslant G$ and $K_2(L)\trianglelefteqslant P_2(L)\leqslant H$.
In other words, the pairs $(P_1(L),K_1(L))$ and $(P_2(L),K_2(L))$ are sections of $G$ and $H$ respectively. Moreover, there is a canonical isomorphism
$\varphi:P_2(L)/K_2(L)\to P_1(L)/K_1(L)$ such that if $(g,h)\in L$, then $\varphi(gK_2(L))=hK_1(L)$.

\begin{lemma}[Goursat's Lemma, \cite{BoucBook}, Lemma $2.3.25$]\label{GoursatLemma}
If $G$ and $H$ are groups and $L$ is a subgroup of $G\times H$, then there is a unique isomorphism 
$\varphi:P_2(L)/K_2(L)\to P_1(L)/K_1(M)$ such that if $(g,h)\in L$, then $\varphi:(gK_2(L))=hK_1(L)$. 

Conversely, for any sections $(A,B)$ and $(C,D)$ of
$G$ and $H$, respectively, such that there is an isomorphism $\varphi:A/B\to C/D$, there is a subgroup 
\[L_{(A,B),\varphi,(C,D)}=L:=\{(g,h)\in G\times H\,|\, \varphi(hD)=gB\},\]
where $P_1(L)=A,K_1(L)=B, P_2(L)=C$, and $K_2(L)=D$.

\end{lemma}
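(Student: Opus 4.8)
The plan is to prove both directions directly from the definitions, treating this as the classical Goursat correspondence. For the forward direction I would first establish that $K_1(L)\trianglelefteqslant P_1(L)$ and $K_2(L)\trianglelefteqslant P_2(L)$, so that the quotients appearing in the statement are genuine groups. Normality of $K_1(L)$ in $P_1(L)$ follows by a conjugation argument: given $g_0\in P_1(L)$ choose $h_0$ with $(g_0,h_0)\in L$, and for $k\in K_1(L)$ note $(k,1)\in L$, so the conjugate $(g_0,h_0)(k,1)(g_0,h_0)^{-1}=(g_0kg_0^{-1},1)$ lies in $L$, whence $g_0kg_0^{-1}\in K_1(L)$. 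The argument for $K_2(L)\trianglelefteqslant P_2(L)$ is symmetric.

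Next I would construct the map $\varphi\colon P_2(L)/K_2(L)\to P_1(L)/K_1(L)$ by setting $\varphi(hK_2(L))=gK_1(L)$ for any $(g,h)\in L$ (such a $g$ exists precisely because $h\in P_2(L)$). The crux is well-definedness: if $(g,h),(g',h')\in L$ with $hK_2(L)=h'K_2(L)$, then $h^{-1}h'\in K_2(L)$, so $(1,h^{-1}h')\in L$; multiplying $(g^{-1}g',h^{-1}h')=(g,h)^{-1}(g',h')\in L$ by $(1,h^{-1}h')^{-1}\in L$ yields $(g^{-1}g',1)\in L$, i.e. $g^{-1}g'\in K_1(L)$ and $gK_1(L)=g'K_1(L)$. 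That $\varphi$ is a homomorphism is immediate from componentwise multiplication in $L$; surjectivity is built into the definition; and injectivity follows because $\varphi(hK_2(L))=K_1(L)$ forces $(g,1),(g,h)\in L$, hence $(1,h)\in L$ and $h\in K_2(L)$. Uniqueness is automatic, since the stated compatibility $\varphi(hK_2(L))=gK_1(L)$ already pins down $\varphi$ on every coset.

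For the converse, given sections $(A,B)$, $(C,D)$ and an isomorphism $\varphi\colon C/D\to A/B$, I would first observe that $L=\{(g,h)\in A\times C\,:\,\varphi(hD)=gB\}$ and verify it is a subgroup: the homomorphism property gives $\varphi(hh'D)=\varphi(hD)\varphi(h'D)=gg'B$ for $(g,h),(g',h')\in L$, so $(gg',hh')\in L$, and the inverse case is analogous. Then I would read off the four invariants directly. Since $\varphi$ sends the trivial coset to the trivial coset, $(g,1)\in L$ iff $gB=B$ iff $g\in B$, giving $K_1(L)=B$; by injectivity $(1,h)\in L$ iff $hD=D$ iff $h\in D$, giving $K_2(L)=D$; every $h\in C$ admits a representative $g$ of $\varphi(hD)$, so $P_2(L)=C$; and surjectivity of $\varphi$ supplies, for each $g\in A$, an $h$ with $\varphi(hD)=gB$, giving $P_1(L)=A$.

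The only delicate points are bookkeeping ones — keeping the two kernels, the two projections, and the direction of $\varphi$ straight, and invoking normality of the kernels exactly at the steps where the quotient multiplications are used. I expect the well-definedness of $\varphi$ in the forward direction to be the single step most worth spelling out, since every subsequent property of $\varphi$ (homomorphism, injectivity, uniqueness) rests on it.
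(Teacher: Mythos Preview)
The paper does not actually prove Goursat's Lemma; it merely states it and cites \cite{BoucBook}, Lemma~$2.3.25$, so there is no proof in the paper to compare against. Your argument is the standard direct verification of the Goursat correspondence and is correct: the normality of the kernels, the well-definedness and bijectivity of $\varphi$, and the recovery of the four invariants in the converse are all handled properly. (You have also silently corrected the typos in the statement --- the target of $\varphi$ should read $P_1(L)/K_1(L)$ rather than $K_1(M)$, the rule should be $\varphi(hK_2(L))=gK_1(L)$, and in the converse $\varphi$ must go $C/D\to A/B$ for the defining condition $\varphi(hD)=gB$ to make sense --- which is the right reading.)
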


\begin{rmk}\label{encodeRmk}
If $G$ and $H$ are finite groups and $L\leqslant G\times H$, we frequently identify $L$ with the quintuple $(A,B,\varphi, C,D)$, where $P_1(L)=A$, $P_2(L)=C$, $K_1(L)=B$, $K_2(L)=D$,
and $\varphi$ is the isomorphism $P_2(L)/K_2(L)\overset{\sim}{\to} P_1(L)/K_1(M)$ described by Goursat's Lemma. In this case, we say $L$ is \emph{encoded} as the quintuple $(A,B,\varphi, C,D)$.
We abusively write this as $L=(A,B,\varphi, C,D)$.
We also will write $(A,B,\varphi, C,D)$ as $(A,B;C,D)_\varphi$. In the case where $A=B$ and $C=D$, we will just use the quadruple $(A,B;C,D)$, since there is no choice of $\varphi$.

We also remark that in Propositions~\ref{baseCase} and ~\ref{inductiveCase}, we consider $G$ being a cyclic group of order $p^n$, where $p$ is a prime number. Since the subgroups of $G$
are in one-to-one with the nonnegative integers $0,\cdots, n$, we use the notation $(i,j;k,l)_\varphi$, where $0\leq i,j,k,l\leq n$ and $i-j=k-l\geq0$ to encode subgroups of $G\times G$..

\end{rmk}

If we suppose finite groups $G$, $H$, and $K$, a $(G,H)$-biset $X$, and an $(H,K)$-biset $Y$, we can set 
$X\times_HY$ to be the $H$-obits of the $H$-set $X\times Y$ with the action
$h\cdot(x,y)=(xh^{-1},hy)$, for any $h\in H$ and $(x,y)\in X\times Y$. We then let $X\times_HY$ take on the natural action from $G\times K$. 
Elements of $X\times_HY$ are denoted by $(x,_Hy)$. The operation $\times_H$, which we call the \emph{tensor product} of bisets, induces a bilinear map
from $B(G,H)\times B(H,K)\to B(G,K)$, which we denote with $\circ_H$, such that $[X]\circ_H[Y]=[X\times_HY]$.

\begin{prop}[\cite{BoucBook}, 2.3.14(1)]
Let $G, H, K,$ and $L$ be groups. If $U$ is a $(G,H)$-biset, $V$ is an $(H,K)$-biset, and $W$ is a $(K, L)$-biset. then there is a canonical isomorphism of $(G,L)$-bisets
\[(U\times_HV)\times_KW\cong U\times_H(V\times_KW)\]
given by $((u,_Hv),_Kw)\mapsto (u,_H(v,_Kw))$, for all $(u,v,w)\in U\times V\times W$.
\end{prop}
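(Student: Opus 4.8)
The plan is to realize both iterated tensor products as quotients of one and the same set, $U\times V\times W$, by a pair of commuting group actions, and then invoke the elementary fact that quotienting by commuting actions can be carried out in either order with the same result. Concretely, I would let $H$ act on $U\times V\times W$ by $h\cdot(u,v,w)=(uh^{-1},hv,w)$ and let $K$ act by $k\cdot(u,v,w)=(u,vk^{-1},kw)$. Since the $H$-action alters only the $U$- and $V$-coordinates while the $K$-action alters only the $V$- and $W$-coordinates, and since on the shared $V$-coordinate one is right translation and the other is left translation, the two actions commute; hence they assemble into a single action of $H\times K$ on $U\times V\times W$.

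Next I would check that performing the quotient in stages recovers the two sides of the claimed isomorphism. Taking $H$-orbits first, the orbit of $(u,v,w)$ is $\{(uh^{-1},hv,w):h\in H\}$, which is precisely the pair $\bigl((u,_Hv),w\bigr)$; thus $(U\times V\times W)/H$ is canonically $(U\times_HV)\times W$, and the residual $K$-action on it is exactly the action used to define $(U\times_HV)\times_KW$, so the full quotient is $(U\times_HV)\times_KW$. Taking $K$-orbits first instead identifies $(U\times V\times W)/K$ with $U\times(V\times_KW)$, and the residual $H$-action is the one defining $U\times_H(V\times_KW)$, so the full quotient is $U\times_H(V\times_KW)$. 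Both full quotients equal $(H\times K)\backslash(U\times V\times W)$, and tracing the class of a representative $(u,v,w)$ through the two descriptions produces exactly the assignment $\bigl((u,_Hv),_Kw\bigr)\mapsto\bigl(u,_H(v,_Kw)\bigr)$; in particular this map is a well-defined bijection.

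Finally I would record equivariance: the $G\times L$-action on $U\times V\times W$ given by $(g,\ell)\cdot(u,v,w)=(gu,v,w\ell^{-1})$ commutes with both the $H$- and the $K$-actions (again because it touches disjoint coordinates, except for left/right translations which commute), hence descends to each quotient and agrees with the standard $(G,L)$-biset structures on $(U\times_HV)\times_KW$ and on $U\times_H(V\times_KW)$. Since the bijection above is induced by the identity on representatives $(u,v,w)$, it is automatically $G\times L$-equivariant, which gives the asserted isomorphism of $(G,L)$-bisets. There is no essential obstacle here: the only real work is the bookkeeping of the second step — verifying that ``take $H$-orbits, then pair with $W$, then take $K$-orbits'' genuinely computes the $(H\times K)$-orbits — and this is purely formal once one notes that the $K$-action used on $U\times_HV$ is the one legitimately induced from $U\times V\times W$ precisely because that action commutes with the $H$-action.
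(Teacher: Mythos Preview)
Your proposal is correct. The paper does not supply its own proof of this proposition; it simply quotes the result from Bouc's book and records the explicit map $((u,_Hv),_Kw)\mapsto (u,_H(v,_Kw))$. Your argument---realizing both iterated tensor products as the quotient of $U\times V\times W$ by the commuting $H$- and $K$-actions, so that the two orders of quotienting yield canonically the same $(H\times K)$-orbit space---is the standard and cleanest way to see well-definedness and bijectivity simultaneously, and it recovers exactly the map recorded in the statement.
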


\begin{rmk}
The above proposition implies that if the bilinear maps $\circ_H:B(G,H)\times B(H,K)\to B(G,K)$ and $\circ_K:B(H,K)\times B(K,L)\to B(H,L)$ interact associatively. That is
\[((a\circ_Hb)\circ_Kc)=(a\circ_H(b\circ_Kc))\in B(G,L),\]
for all $a\in B(G,H)$, $b\in B(H,K)$, and $c\in B(K,L)$.
Because of this, we will frequently write it without the subscript if the context is clear.
\end{rmk}

There is another related operation we need to consider, this time between subgroups of $G\times H$ and $H\times K$. 
Given a subgroup $L \leqslant G\times H$ and a subgroup $M\leqslant H\times K$, we can define a subgroup of $G\times K$ by
\[L*M:=\{(g,k)\in G\times K\,|\, \exists\, h\in H, \mathrm{ \,such\,\, that\, } (g,h)\in L \mathrm{\, and\, } (h,k)\in M\}.\]

The following proposition gives us a formula for computing the product $\circ_H$ of basis elements of $B(G,H)$ and $B(H,K)$ in terms of
the basis of $B(G,K)$.

\begin{prop}[\cite{BoucBook}, $2.3.24$]\label{MackeyFormula}
For $L\leqslant G\times H$ and $M\leqslant H\times K$, we have
\[[G\times H/L]\circ_H[H\times K/M]=\sum_{h\in [P_2(L)\backslash H/P_1(M)]}[G\times K/(L*\,^{(h,1)}M)]\in B(G,K)\]
\end{prop}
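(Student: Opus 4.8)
The plan is to realise the biset $(G\times H/L)\times_H(H\times K/M)$ explicitly as a left $(G\times K)$-set and to decompose it into its transitive constituents by a direct orbit analysis, the essential input being Goursat's Lemma~\ref{GoursatLemma}.

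First I would write an arbitrary element of this biset as $\big((g,h_0)L,\,{}_H(h_1,k)M\big)$ with $g\in G$, $h_0,h_1\in H$ and $k\in K$, and use the defining $H$-action on $(G\times H/L)\times(H\times K/M)$ (the one over which the tensor product is formed) to push $h_0$ into the middle coordinate; thus every element has a normalised representative $\big((g,1)L,\,{}_H(h,k)M\big)$. A short computation then records exactly when two normalised representatives coincide: $\big((g,1)L,\,{}_H(h,k)M\big)=\big((\bar g,1)L,\,{}_H(\bar h,\bar k)M\big)$ if and only if there is $h_0\in H$ with $(g^{-1}\bar g,h_0^{-1})\in L$ and $(h^{-1}h_0\bar h,k^{-1}\bar k)\in M$. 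This identification relation is the computational heart of the argument; everything else is extracting consequences from it.

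Next I would use the left $G$-action and the right $K$-action of the $(G,K)$-biset structure to normalise $g=1$ and $k=1$, reducing matters to two questions: (i) when do $\big((1,1)L,\,{}_H(h,1)M\big)$ and $\big((1,1)L,\,{}_H(h',1)M\big)$ lie in the same $(G\times K)$-orbit, and (ii) what is the $(G\times K)$-stabiliser of $\big((1,1)L,\,{}_H(h,1)M\big)$? For (i), the crucial point is that even after fixing $g=1$ one may translate $h$ on the left by \emph{any} element $h_0$ of $P_2(L)$, not merely by $K_2(L)$: by Lemma~\ref{GoursatLemma} such a translation pairs with a change of the $G$-coordinate by the corresponding element $\varphi_L(h_0K_2(L))$, which is then absorbed by the $G$-action; symmetrically, fixing $k=1$ still permits right translation of $h$ by $P_1(M)$. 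Hence the $(G\times K)$-orbits are in bijection with the double cosets $P_2(L)\backslash H/P_1(M)$, which is precisely the index set of the claimed sum. For (ii), the identification relation shows that $(g',k')$ stabilises $\big((1,1)L,\,{}_H(h,1)M\big)$ exactly when there is $h_0\in H$ with $(g',h_0)\in L$ and $(h^{-1}h_0h,k')\in M$; rewriting the latter as $(h_0,k')\in{}^{(h,1)}M$ identifies this stabiliser with $L*{}^{(h,1)}M$. Thus the orbit of $\big((1,1)L,\,{}_H(h,1)M\big)$ is isomorphic as a $(G,K)$-biset to $(G\times K)/\big(L*{}^{(h,1)}M\big)$, and summing over a transversal $h\in[P_2(L)\backslash H/P_1(M)]$ yields the stated identity in $B(G,K)$.

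I expect step (i) to be the main obstacle. The naive count, applied to representatives with $g=1$ and $k=1$ fixed, produces only the kernel double cosets $K_2(L)\backslash H/K_1(M)$; one must see clearly that this coarsens to $P_2(L)\backslash H/P_1(M)$ once the ambient $G$- and $K$-actions are brought to bear, and this hinges on using the two Goursat isomorphisms $\varphi_L$ and $\varphi_M$ correctly — all while keeping every inversion in the biset and star-product conventions (cf.\ Remark~\ref{encodeRmk}) consistent throughout.
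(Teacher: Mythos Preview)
The paper does not supply a proof of this proposition: it is quoted from \cite{BoucBook}, $2.3.24$, and left unproved. Your outline is the standard orbit-decomposition argument (and is essentially Bouc's), and it is correct; in particular your treatment of the potential subtlety in step~(i) --- that the naive $K_2(L)\backslash H/K_1(M)$ coarsens to $P_2(L)\backslash H/P_1(M)$ once the ambient $G$- and $K$-actions are used via the Goursat isomorphisms --- is exactly the point one has to get right, and you have it.
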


\subsection{Opposite bisets}
The following definition is central to our topic.

\begin{defn}(\cite{BoucBook}, $2.3.6$)
If $G$ and $H$ are finite groups and $X$ is a $(G,H)$-biset, then there is a unique $(H,G)$-biset called the \emph{opposite biset} of $X$,
which is equal to the set $X$, equipped with the action
\[h\cdot x\cdot g = g^{-1}xh^{-1} \in X\]
for all $h \in H$, $g\in G$, and $x\in X$. We denote this $(H,G)$-biset by $X^\circ$.
\end{defn}

If $L\leqslant G\times H$, we also consider the \emph{opposite subgroup of $L$}, defined by
\[L^\circ:=\{(h,g)\in H\times G\,|\, (g,h)\in L\}\leqslant H\times G.\]

\begin{lemma}\label{oppositeProp}
Let $G, H$, and $K$ be finite groups.
\begin{enumerate}[label=(\roman*)]
\item If $L$ is a subgroup of $G\times H$, then
\[(G\times H/L)^\circ\cong H\times G/L^\circ\]
as $(H,G)$-bisets.
\item If $L\leqslant G\times H$ and $M\leqslant H\times K$ then
\[(L^\circ)^\circ=L\]
and
\[(L*M)^\circ=M^\circ*L^\circ.\]
\item If $X$ is a $(G,H)$-biset and $Y$ is an $(H,K)$-biset, then
\[(X\times_HY)^\circ\cong Y^\circ\times_HX^\circ\]
as $(K,G)$-bisets.
\item There is a group isomorphism $(-)^\circ:B(G,H)\to B(H,G)$ induced by sending
\[[X]\mapsto[X^\circ]\]
for any $(G,H)$-biset $X$.
\end{enumerate}
\end{lemma}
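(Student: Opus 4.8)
The plan is to treat the four parts in order, each time producing an explicit bijection and then checking that it intertwines the relevant (bi)set actions; all of the content is bookkeeping of left/right actions and of the inversions built into the opposite biset. For (i), I would start from the swap isomorphism $\sigma\colon G\times H\to H\times G$, $\sigma(g,h)=(h,g)$, which by definition carries $L$ to $L^\circ$ and hence induces a bijection of coset spaces $H\times G/L^\circ\to G\times H/L$, $(h,g)L^\circ\mapsto(g,h)L$. It then remains to verify that, once $G\times H/L$ is equipped with its \emph{opposite} $(H,G)$-biset structure (so that $h'\cdot x\cdot g'=g'^{-1}xh'^{-1}$), this bijection is a map of $(H,G)$-bisets; this is a one-line computation once one writes the right $H$-action on $G\times H/L$, and the right $G$-action on $H\times G/L^\circ$, as left multiplications by elements with a coordinate inverted.

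Part (ii) is purely formal. The identity $(L^\circ)^\circ=L$ is immediate, since $(L^\circ)^\circ=\{(g,h)\mid(g,h)\in L\}=L$, and for the other identity I would simply unwind both sides: $(L*M)^\circ=\{(k,g)\mid(g,k)\in L*M\}$ and $M^\circ*L^\circ$ both equal $\{(k,g)\mid\exists\,h\in H,\ (g,h)\in L\text{ and }(h,k)\in M\}$. For (iii), recall that $X\times_HY$ is the set of $H$-orbits of $X\times Y$ under $h\cdot(x,y)=(xh^{-1},hy)$, with $(G,K)$-action $g\cdot(x,_Hy)\cdot k=(gx,_Hyk)$. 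Carrying the action conventions through the definition of the opposite biset, one finds that $Y^\circ\times_HX^\circ$ arises from the $H$-orbits of $Y^\circ\times X^\circ$ under $h\cdot(y,x)=(hy,xh^{-1})$; consequently the ``reverse the pair'' assignment $(y,_Hx)\mapsto(x,_Hy)$ is a well-defined bijection $Y^\circ\times_HX^\circ\to X\times_HY$. I would then check it is a map of $(K,G)$-bisets by noting that it sends $k\cdot(y,_Hx)\cdot g=(yk^{-1},_Hg^{-1}x)$ to $(g^{-1}x,_Hyk^{-1})$, which is exactly $g^{-1}\cdot(x,_Hy)\cdot k^{-1}$, the value of the opposite $(K,G)$-action on $(x,_Hy)$.

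Finally, for (iv), the opposite construction is plainly functorial: an isomorphism $X\to X'$ of $(G,H)$-bisets is simultaneously an isomorphism $X^\circ\to X'^\circ$, and $(X\sqcup X')^\circ\cong X^\circ\sqcup X'^\circ$. Hence $[X]\mapsto[X^\circ]$ descends to a well-defined homomorphism of $\bZ$-modules $(-)^\circ\colon B(G,H)\to B(H,G)$. To see it is an isomorphism I would either invoke (i), noting that it sends the standard basis element $[G\times H/L]$ to $[H\times G/L^\circ]$ and that $L\mapsto L^\circ$ is a conjugation-preserving bijection from subgroups of $G\times H$ to subgroups of $H\times G$ (so the standard basis maps bijectively to the standard basis), or simply observe that $(-)^\circ$ applied twice is the identity because $(X^\circ)^\circ\cong X$.

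The only place requiring care — and the place where a naive attempt goes wrong — is the variance bookkeeping: every passage to an opposite biset turns a left action into a right action and inserts an inverse, and in part (iii) this happens on both tensor factors at once, which is precisely why the isomorphism is the pair-reversal $(y,_Hx)\mapsto(x,_Hy)$ rather than a coordinatewise map. I expect this to be the main (and essentially the only) obstacle.
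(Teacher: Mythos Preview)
Your proposal is correct and follows essentially the same route as the paper: for (i) and (iii) you exhibit an explicit bijection and reduce everything to checking equivariance, (ii) is unpacked directly from the definitions, and (iv) is deduced from (i). The only cosmetic difference is that in (i) the paper uses the map $(g,h)L\mapsto(h^{-1},g^{-1})L^\circ$ (swap and invert) whereas you use the plain swap $(h,g)L^\circ\mapsto(g,h)L$; both are legitimate biset isomorphisms once the opposite action is written out, so this is a matter of convention rather than a different argument.
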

\begin{proof}
Part $(ii)$ follows easily from definitions. Part $(iv)$ follows from part $(i)$. To prove $(i)$, it is sufficient to check that the map $(g,h)L\to (h^{-1},g^{-1})L^\circ$ is an isomorphism of
$(H,G)$-bisets. Similarly, for $(iii)$, it just needs to be verified that $(x,_Hy)\mapsto (y,_H,x)$ is a well-defined isomorphism of $(K,G)$-bisets. The verifications are straightforward.
\end{proof}

\subsection{Double Burnside rings}

If $G$ is a finite group, then $B(G,G)$ has a ring structure given by the multiplication $[X]\circ_G[Y]=[X\times_GY]$ for all $(G,G)$-bisets $X$ and $Y$.
With this multiplication $B(G,G)$ is called the \emph{double Burnside ring} of $G$. The identity element of $B(G,G)$, which we denote by $\Id_G$, 
is equal to the class $[G]$, where $G$ is the set of elements of $G$ with the usual left and right multiplication as its $(G,G)$-action.

It should be recognized that although $B(G,G)$ and $B(G\times G)$ are isomorphic \emph{as groups}, their ring structures are quite different.
For example, Burnside rings are commutative rings, yet $B(G,G)$ is commutative only when $G$ is trivial. Thus, we have
$B(G,G)\cong B(G\times G)$ as rings if and only if $B(G,G)\cong B(G\times G)\cong B(G)\cong \bZ$.

\begin{prop}\label{dual}
Let $G$ be a finite group. Taking opposite bisets induces an anti-involution on $B(G,G)$. In other words, for any $a,b\in B(G,G)$ we have
\[(a^\circ)^\circ=a\]
and
\[(a\circ_Gb)^\circ=b^\circ\circ_Ga^\circ.\]

\end{prop}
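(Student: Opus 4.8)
The plan is to deduce both identities directly from Lemma~\ref{oppositeProp}, specialized to the case $G=H=K$. Part (iv) of that lemma, applied with both groups equal to $G$, already furnishes an additive group homomorphism $(-)^\circ\colon B(G,G)\to B(G,G)$ (the only subtlety there, namely that the opposite-biset construction respects disjoint unions and hence descends to the Grothendieck group, is subsumed in that statement). So the two things that remain to be checked are that this map squares to the identity and that it reverses the order of the ring multiplication $\circ_G$.

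For the involution property $(a^\circ)^\circ=a$: since $(-)^\circ$ is additive and $B(G,G)$ is spanned over $\bZ$ by the classes $[X]$ of $(G,G)$-bisets, it suffices to verify $([X]^\circ)^\circ=[X]$ for each such $X$. I would do this by unwinding the definition of the opposite biset twice: the $(G,G)$-biset $(X^\circ)^\circ$ carries the action $g\cdot x\cdot h=(g^{-1})^{-1}x(h^{-1})^{-1}=gxh$, which is the original action on $X$. (Alternatively one could argue on the combinatorial side using $(L^\circ)^\circ=L$ from Lemma~\ref{oppositeProp}(ii) together with the standard-basis description $B(G,G)=\bigoplus_{L}\bZ[G\times G/L]$ and part~(i).)

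For anti-multiplicativity, the $\bZ$-bilinearity of $\circ_G$ and the additivity of $(-)^\circ$ again reduce the claim to basis elements, i.e. to showing $([X]\circ_G[Y])^\circ=[Y]^\circ\circ_G[X]^\circ$ for $(G,G)$-bisets $X,Y$. By the very definition of the multiplication on $B(G,G)$ this is the assertion $[(X\times_G Y)^\circ]=[Y^\circ\times_G X^\circ]$, which is exactly Lemma~\ref{oppositeProp}(iii) with $H=K=G$. Combining the two displayed identities shows $(-)^\circ$ is an anti-involution. I do not anticipate a real obstacle: all the geometric content sits in Lemma~\ref{oppositeProp}, and what is left is the bookkeeping of passing from bisets to their classes; the one place meriting a word of care is precisely this reduction to basis elements, which is legitimate because $\circ_G$ is bilinear and $(-)^\circ$ is additive.
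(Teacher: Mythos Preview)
Your proposal is correct and follows essentially the same route as the paper's proof: both derive the involution property from Lemma~\ref{oppositeProp} parts (i), (ii), (iv) (you also offer the equivalent direct unwinding of the double opposite), and both obtain anti-multiplicativity from Lemma~\ref{oppositeProp}(iii) after reducing to basis elements by bilinearity. Your write-up is in fact more explicit than the paper's, which simply cites the relevant parts of that lemma.
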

\begin{proof}
The first equality follows from Lemma~\ref{oppositeProp}, using parts $(i), (ii),$ and $(iv)$. The second equality follows from Lemma~\ref{oppositeProp}$(iii)$.
\end{proof}

\subsection{Elementary bisets}

In this subsection, we assume $G$ is a finite group and $N$ is a normal subgroup of $G$. For the topic at hand
we only consider three of the five elementary biset types. The interested reader is encouraged to check out Bouc's treatise on the subject in Chapters $1$ and $2$ from \cite{BoucBook}, where
the following definition and propositions come from.

\begin{defn}
The $(G, G/N)$-biset $G/N$ with natural action will be called \emph{inflation} from $G/N$ to $G$ and denoted by $\Inf_{G/N}^G$.
Dually, we define \emph{deflation} from $G$ to $G/N$ to be the set $G/N$ with natural $(G/N,G)$-action and denote this by $\Def_{G/N}^G$.

If $f:G\mapsto H$ is an isomorphism, then the set $H$ with the $(H,G)$-action $h\cdot x \cdot g=hxf(g)$, for all $h,x\in H$ and all $g\in G$, will be denoted $\Iso(f)$. 
\end{defn}

Note that the biset $\Inf_{G/N}^G$ gives rise to a functor from the category of finite $G/N$-sets to the category of $G$-sets. Similarly, $\Def_{G/N}^G$ gives rise to a functor
from the category of $G$-sets to the category of $G/N$-sets. This justifies the slightly awkward ``from/to" language in their definition.
We abusively denote the images of these bisets in $B(G,G/N)$ and $B(G/N,G)$ as $\Inf_{G/N}^G$ and $\Def_{G/N}^G$, respectively.
Similarly, for an isomorphism $f:G\to H$ we do not notationally distinguish the biset $\Iso(f)$ from its image in $B(H,G)$.

\begin{prop}[\cite{BoucBook}, 1.1.3, 2.b.]\label{IsoInfDef}
Let $G$ be a finite group, $N\trianglelefteqslant G$, and $\varphi:G\to H$ is a group isomorphism., then
\[\Iso(\varphi')\circ\Def^G_{G/N}=\Def_{H/\varphi(N)}^H\circ\Iso(\varphi)\]
\[\Iso(\varphi)\circ\Inf_{G/N}^G=\Inf^H_{H/\varphi(N)}\circ\Iso(\varphi),\]
where $\varphi':G/N\to H/\varphi(N)$ is the gorup isomorphism induced by $\varphi$.

\end{prop}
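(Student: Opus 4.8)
The plan is to establish both identities at the level of (transitive) bisets and then read off the asserted equalities in the groups $B(-,-)$ by passing to isomorphism classes. Since each of $\Def^G_{G/N}$, $\Def^H_{H/\varphi(N)}$, $\Iso(\varphi)$, $\Iso(\varphi')$ is a transitive biset, I would begin by recording its Goursat encoding (Lemma~\ref{GoursatLemma}, Remark~\ref{encodeRmk}), obtained by computing the stabilizer of the obvious base point. Writing $D_N=\{(gN,g):g\in G\}\leqslant G/N\times G$ and $\Gamma_\varphi=\{(\varphi(g),g):g\in G\}\leqslant H\times G$, one gets
\[\Def^G_{G/N}=[\,G/N\times G\,/\,D_N\,],\qquad \Iso(\varphi)=[\,H\times G\,/\,\Gamma_\varphi\,],\]
and likewise $\Iso(\varphi')$ is encoded by the graph $\Gamma_{\varphi'}=\{(\varphi'(\bar g),\bar g):\bar g\in G/N\}$ and $\Def^H_{H/\varphi(N)}$ by $D_{\varphi(N)}=\{(h\varphi(N),h):h\in H\}$.

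For the first identity I would then compute both composites with the product formula of Proposition~\ref{MackeyFormula}. In $\Iso(\varphi')\circ_{G/N}\Def^G_{G/N}$ the relevant double-coset index set is $P_2(\Gamma_{\varphi'})\backslash(G/N)/P_1(D_N)=(G/N)\backslash(G/N)/(G/N)$, a single point, so Mackey's formula collapses to one term $[\,H/\varphi(N)\times G\,/\,(\Gamma_{\varphi'}*D_N)\,]$; unwinding the definition of the $*$-operation and using that $\varphi'$ is the isomorphism induced by $\varphi$ gives $\Gamma_{\varphi'}*D_N=\{(\overline{\varphi(g)},g):g\in G\}=:\Delta$. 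Running the same computation for $\Def^H_{H/\varphi(N)}\circ_H\Iso(\varphi)$ again produces a single term, whose encoding subgroup $D_{\varphi(N)}*\Gamma_\varphi$ is once more exactly $\Delta$. Hence both sides equal $[\,H/\varphi(N)\times G\,/\,\Delta\,]$ in $B(H/\varphi(N),G)$, which is the first identity.

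For the second identity I would not redo this computation but instead apply the anti-involution $(-)^\circ$ of Lemma~\ref{oppositeProp}(iii)--(iv). Since $(\Def^G_{G/N})^\circ\cong\Inf^G_{G/N}$ (immediate from the definitions) and, by Lemma~\ref{oppositeProp}(i), $\Iso(\psi)^\circ\cong\Iso(\psi^{-1})$ for any group isomorphism $\psi$ (the opposite of the graph $\Gamma_\psi$ is the graph $\Gamma_{\psi^{-1}}$), taking opposites of both sides of the first identity and using $(a\circ b)^\circ=b^\circ\circ a^\circ$ turns it into $\Inf^G_{G/N}\circ\Iso(\varphi'^{-1})=\Iso(\varphi^{-1})\circ\Inf^H_{H/\varphi(N)}$; as $\varphi$ runs over all isomorphisms $G\to H$ and $N$ over all normal subgroups of $G$, the pair $(\varphi^{-1},\varphi(N))$ runs over all pairs consisting of an isomorphism $H\to G$ and a normal subgroup of $H$, so after renaming this is precisely the asserted inflation identity. (Alternatively one repeats the Mackey computation verbatim: the two $*$-products that occur both equal $\{(h,\overline{\varphi^{-1}(h)}):h\in H\}\leqslant H\times G/N$.)

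I expect the only real work to be the first step: encoding the four elementary bisets with the correct projections, kernels and Goursat isomorphisms, and checking in each case that the double cosets in Proposition~\ref{MackeyFormula} collapse to a point. Once those encodings are pinned down, each half of each identity is a one-line computation with $*$, so the main obstacle is attention to the left/right conventions rather than anything substantive — which is consistent with Proposition~\ref{IsoInfDef} being, as indicated, a restatement of \cite{BoucBook}, 1.1.3(2.b).
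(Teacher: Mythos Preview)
Your argument is correct. Note that the paper itself does not supply a proof of Proposition~\ref{IsoInfDef}: it simply records the statement with a pointer to \cite{BoucBook}, 1.1.3(2.b). In Bouc's treatment these relations are verified by writing down explicit biset isomorphisms between the two tensor products (for instance, both sides of the first identity are transitive $(H/\varphi(N),G)$-bisets of cardinality $|G/N|$, and the evident map $H/\varphi(N)\to H/\varphi(N)$ intertwines the two actions). Your route is different: you encode each elementary biset by its Goursat subgroup, observe that the Mackey double-coset sum in Proposition~\ref{MackeyFormula} collapses to a single term because all second/first projections are full, and then compare the resulting $*$-products directly. This is slightly heavier in machinery but has the virtue of being completely mechanical once the encodings are written down, and it makes the duality step for the second identity (via Lemma~\ref{oppositeProp}) genuinely effortless. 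Either approach is fine here.

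Two small remarks. First, the second displayed identity in the paper has a typo: the right-hand $\Iso(\varphi)$ should be $\Iso(\varphi')$ for the composite even to make sense, and your proof tacitly works with the corrected version. Second, your relabeling at the end is cleanest if stated as: apply the already-proved first identity to the data $(H,\varphi(N),\varphi^{-1})$ and then take opposites; this avoids the ``as $\varphi$ runs over all isomorphisms'' phrasing, which is fine but slightly obscures that you are really just substituting.
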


(Again, all the statements in the proposition below can be found in \cite{BoucBook} Chapters $1$ and $2$, however, 
the last statement of part $(iv)$ in the comes from $6.2.3$ in \cite{BoucBook}.)

\begin{prop}\label{elementaryBisetProp}\hfill
\begin{enumerate}[label=(\roman*)]
\item Identifying $G/1$ with $G$, we have $\Inf_{G/1}^G=\Def_{G/1}^G=\Id_G\in B(G,G)$.
\item We have $(\Inf_{G/N}^G)^\circ=\Def_{G/N}^G\in B(G/N,G)$ and $(\Def_{G/N}^G)^\circ=\Inf_{G/N}^G\in B(G,G/N)$.
\item If $M$ is a normal subgroup of $G$ containing $N$, then
\[\Inf_{G/N}^G\circ\Inf_{G/M}^{G/N}=\Inf_{G/M}^G\in B(G,G/M)\]
and
\[\Def_{G/N}^G\circ\Def_{G/M}^{G/N}=\Def_{G/M}^G\in B(G/M,G).\]
(Note we are identifying $G/M$ canonically with the quotient $(G/N)/(M/N)$.)
\item There is an isomorphism of $(G/N,G/N)$-bisets 
\[\Def_{G/N}^G\times_G\Inf_{G/N}^G\cong G/N.\]
Thus $\Def_{G/N}^G\circ \Inf_{G/N}^G=\Id_{G/N}\in B(G/N,G/N)$. Moreover, $\Inf_{G/N}^G\circ\Def_{G/N}^N$ is an idempotent in $B(G,G)$. 
\end{enumerate}
\end{prop}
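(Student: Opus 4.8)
The plan is to handle the four parts in order, in each case exhibiting an explicit isomorphism of bisets and then reading off the corresponding identity in the appropriate Burnside group. Part (i) is immediate from the definitions: under the identification $G/1=G$, the biset $\Inf_{G/1}^G$ is the set $G$ with left and right multiplication, which is exactly the biset representing $\Id_G$, and the same set also represents $\Def_{G/1}^G$, so both classes equal $\Id_G$. For part (ii) I would realize $\Inf_{G/N}^G$ as $[(G\times G/N)/L]$, where $L=\{(g,gN):g\in G\}$ is the stabilizer in $G\times G/N$ of the coset $N$; then $L^\circ=\{(gN,g):g\in G\}$ is precisely the stabilizer defining $\Def_{G/N}^G$, so Lemma~\ref{oppositeProp}(i),(iv) gives $(\Inf_{G/N}^G)^\circ=\Def_{G/N}^G$, and applying $(-)^\circ$ once more together with $((-)^\circ)^\circ=\Id$ yields the companion identity.

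For part (iii), since $M$ contains $N$ we identify $G/M$ with $(G/N)/(M/N)$ and compute the tensor product $\Inf_{G/N}^G\times_{G/N}\Inf_{G/M}^{G/N}$, whose underlying set is $(G/N)\times_{G/N}(G/M)$. I claim the assignment $(xN,{}_{G/N}yM)\mapsto xyM$ is a well-defined isomorphism of $(G,G/M)$-bisets onto $G/M=\Inf_{G/M}^G$: well-definedness uses the tensor relation $(xaN,{}_{G/N}yM)=(xN,{}_{G/N}ayM)$ together with $N\subseteq M$, surjectivity is clear, bijectivity follows from a cardinality count (or from the observation that the left $G$-action is transitive with the correct point stabilizer, or directly from Proposition~\ref{MackeyFormula} applied to the defining subgroups), and equivariance for the left $G$- and right $G/M$-actions is a routine check. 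The deflation identity in (iii) then follows by applying the anti-involution $(-)^\circ$ (Lemma~\ref{oppositeProp}(iii),(iv)) to the inflation identity and using part (ii).

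Part (iv) contains the one substantive computation. Writing elements of $\Def_{G/N}^G\times_G\Inf_{G/N}^G$ as $(xN,{}_{G}yN)$, the tensor relation is $(xgN,{}_{G}yN)=(xN,{}_{G}gyN)$ for $g\in G$; I would verify that $(xN,{}_{G}yN)\mapsto xyN$ is well-defined and surjective onto $G/N$, that every $G$-orbit on the product set has a representative $(N,{}_{G}zN)$ with $(N,{}_{G}zN)=(N,{}_{G}z'N)$ if and only if $zN=z'N$ (here normality of $N$ is used), whence the map is bijective, and finally that it intertwines the two $(G/N,G/N)$-actions. This gives $\Def_{G/N}^G\times_G\Inf_{G/N}^G\cong G/N$, i.e.\ $\Def_{G/N}^G\circ\Inf_{G/N}^G=\Id_{G/N}$. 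Setting $e=\Inf_{G/N}^G\circ\Def_{G/N}^G\in B(G,G)$, associativity of $\circ$ then gives $e\circ e=\Inf_{G/N}^G\circ(\Def_{G/N}^G\circ\Inf_{G/N}^G)\circ\Def_{G/N}^G=\Inf_{G/N}^G\circ\Id_{G/N}\circ\Def_{G/N}^G=e$, so $e$ is idempotent.

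I expect the main obstacle to be the bookkeeping in parts (iii) and (iv): once the tensor products are identified with coset spaces one must keep the left and right actions carefully straight in order to see that the naive ``multiply the representatives'' map is simultaneously well-defined, bijective, and biset-equivariant, and the normality of $N$ is exactly what makes the well-definedness and injectivity work in (iv). Everything else --- parts (i) and (ii), the idempotency, and the deflation identities --- is then either definitional or a formal consequence of the anti-involution (Proposition~\ref{dual} and Lemma~\ref{oppositeProp}) and of associativity of $\circ$.
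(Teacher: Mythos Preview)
Your proposal is correct and follows essentially the same route as the paper: parts (i) and (ii) are declared clear, part (iii) uses the explicit map $(g_1N,_{G/N}g_2M)\mapsto g_1g_2M$ with the deflation identity deduced by taking opposites, and part (iv) uses $(g_1N,_G g_2N)\mapsto g_1g_2N$ followed by the same associativity computation for idempotency. If anything, your write-up is more careful than the paper's, which simply asserts these maps are biset isomorphisms without spelling out well-definedness or bijectivity.
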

\begin{proof}
Statements (i) and (ii) are clear. For (iii), notice that $(G/N)\times_{G/N}(G/M)$ and $G/M$ are isomorphic as 
$(G,G/M)$-bisets via the map $(g_1N,_{G/N}g_2M)\mapsto(g_1g_2M)$. Thus $\Inf_{G/N}^G\cdot_{G/N}\Inf_{G/M}^{G/N}=\Inf_{G/M}^G\in B(G,G/M)$.
Taking opposites give us $\Def_{G/N}^G\circ\Def_{G/M}^{G/N}=\Def_{G/M}^G\in B(G/M,G)$.

For (iv), we use the isomorphism $(g_1N,_{G}g_2N)\mapsto g_1g_2N$ of $(G,G)$-bisets $\Def_{G/N}^G\times_G\Inf_{G/N}^G\cong G/N$. The
last statement follows from the calculation
\[\Inf_{G/N}^G\circ\Def_{G/N}^N\circ\Inf_{G/N}^G\circ\Def_{G/N}^N=\Inf_{G/N}^G\circ\Id_{G/N}\circ\Def_{G/N}^N=\Inf_{G/N}^G\circ\Def_{G/N}^N.\]
\end{proof}

\begin{nota}[\cite{BoucBook}, $6.2.3$]
Given a normal subgroup $N$ of $G$, we use the notation
\[j_N^G:=\Inf_{G/N}^G\circ\Def_{G/N}^N\]
to denote the idempotent from Proposition~\ref{elementaryBisetProp}(iv) associated with $N$.
\end{nota}

One thing to not is that $j_N^G=[G\times G/L]$, where $P_i(L)=G$, and $K_i(L)=N$, with trivial homomorphism $P_1(L)/K_1(L)\cong P_2(L)/K_2(L)$. This tells us that
the set $\{j_N^G\}_{N\trianglelefteqslant G}$ is linearly independent in $B(G,G)$.

The idempotents $j_N^G$ can be used to define a set of useful idempotents from $B(G,G)$. In the definition below 
the function $\mu_{\trianglelefteqslant G}$ denotes the M{\"o}bius function of the poset of normal subgroups of $G$.

\begin{defn}[\cite{BoucBook} $6.2.4$]\label{fidempotentDefinition}
Let $G$ be a finite group and $N\trianglelefteqslant G$. Let $f^G_N$ denote the element in $B(G,G)$ defined by
\[f^G_N:=\sum_{N\leqslant M\trianglelefteqslant G}\mu_{\trianglelefteqslant G}(N,M)j^G_N.\]
\end{defn}

\begin{prop}[\cite{BoucBook} $6.2.5$ and $6.2.7$]\label{fidempotents}
Let $G$. The elements $f_M^G\in B(G,G)$ for $M\trianglelefteqslant G$, are orthogonal idempotents, and for any $N\trianglelefteqslant G$, we have
\[j^G_N=\sum_{N\leqslant M\trianglelefteqslant G}f^G_M.\]
In particular, if $N=1$, we have
\[j_{\{1\}}^G=\Id_G=\sum_{M\trianglelefteqslant G}f^G_M.\]

\end{prop}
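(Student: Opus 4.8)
The plan is to first determine how the idempotents $j^G_N$ multiply with one another, and then to peel off the $f^G_M$ by a ``mark homomorphism'' argument analogous to the one that diagonalises the ordinary Burnside ring. The key computational input is that for $N,M\trianglelefteqslant G$ one has
\[j^G_N\circ_G j^G_M = j^G_{NM},\]
where $NM$ is the (normal) subgroup generated by $N$ and $M$, i.e.\ their join in the poset of normal subgroups of $G$. To see this, recall (as observed just before Definition~\ref{fidempotentDefinition}) that $j^G_N=[G\times G/L_N]$ with $L_N=\{(g,g')\in G\times G\mid g^{-1}g'\in N\}$, so that $P_2(L_N)=P_1(L_M)=G$ and the Mackey formula of Proposition~\ref{MackeyFormula} collapses to the single term $[G\times G/(L_N*L_M)]$; a one-line check shows $L_N*L_M=L_{NM}$. (Alternatively one can assemble this identity from the elementary-biset relations of Proposition~\ref{elementaryBisetProp} together with a formula for $\Def^G_{G/N}\circ_G\Inf^G_{G/M}$.) Since $\{j^G_N\}_{N\trianglelefteqslant G}$ is $\bZ$-linearly independent, the submodule $A\subseteq B(G,G)$ it spans is a commutative unital subring, with unit $j^G_1=\Id_G$, isomorphic to the monoid algebra of the join-semilattice of normal subgroups of $G$.

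Separately, and independently of the ring structure, the definition of $f^G_N$ is by M\"obius inversion in the poset of normal subgroups of $G$ equivalent to the asserted identity $j^G_N=\sum_{N\leqslant M\trianglelefteqslant G}f^G_M$; specialising to $N=1$ gives $\Id_G=\sum_{M\trianglelefteqslant G}f^G_M$. It remains to show the $f^G_M$ are orthogonal idempotents. For each $Q\trianglelefteqslant G$ define a $\bZ$-linear map $\varepsilon_Q\colon A\to\bZ$ by $\varepsilon_Q(j^G_N)=1$ if $N\leqslant Q$ and $\varepsilon_Q(j^G_N)=0$ otherwise. By the multiplication rule above and because $NM\leqslant Q$ exactly when $N\leqslant Q$ and $M\leqslant Q$, each $\varepsilon_Q$ is a ring homomorphism. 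The matrix $\bigl(\varepsilon_Q(j^G_N)\bigr)_{N,Q}$ is the zeta matrix of the poset of normal subgroups, hence unitriangular for any linear extension of $\trianglelefteqslant$ and so invertible over $\bZ$; therefore $\varepsilon:=(\varepsilon_Q)_{Q\trianglelefteqslant G}\colon A\to\prod_{Q\trianglelefteqslant G}\bZ$ is injective (in fact an isomorphism of rings). Finally, by the defining recursion of the M\"obius function,
\[\varepsilon_Q(f^G_N)=\sum_{N\leqslant M\leqslant Q,\ M\trianglelefteqslant G}\mu_{\trianglelefteqslant G}(N,M)=\delta_{N,Q},\]
so that $\varepsilon_Q(f^G_N\circ_G f^G_M)=\delta_{N,Q}\,\delta_{M,Q}=\delta_{N,M}\,\varepsilon_Q(f^G_N)$ for every $Q\trianglelefteqslant G$; injectivity of $\varepsilon$ then forces $f^G_N\circ_G f^G_M=\delta_{N,M}\,f^G_N$, which is exactly the claim.

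The only genuinely computational point is the multiplication formula $j^G_N\circ_G j^G_M=j^G_{NM}$: it is routine but requires care with the Goursat/Mackey bookkeeping, and I expect it to be the main (indeed the sole) obstacle. Everything after it is formal — semilattice combinatorics plus M\"obius inversion — and, as a byproduct, the argument exhibits $A$ as the split product ring $\prod_{M\trianglelefteqslant G}\bZ$ with the $f^G_M$ as its primitive idempotents, recovering Proposition~\ref{fidempotents}.
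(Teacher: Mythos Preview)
Your proof is correct. The paper itself does not prove this proposition; it is stated with a citation to \cite{BoucBook}, so there is no approach to compare against. Your argument is the standard one: the identity $j^G_N\circ_G j^G_M=j^G_{NM}$ (which you verify via the Mackey formula with $P_2(L_N)=P_1(L_M)=G$ and the one-line computation $L_N*L_M=L_{NM}$) makes the $\bZ$-span of the $j^G_N$ into the semilattice algebra of $(\{N\trianglelefteqslant G\},\vee)$, and the $\varepsilon_Q$ you introduce are precisely its characters, giving the Solomon-type diagonalisation with primitive idempotents $f^G_M$. The M\"obius inversion step recovering $j^G_N=\sum_{N\leqslant M}f^G_M$ is immediate from the definition (note the paper's displayed definition contains a typo: the summand should be $j^G_M$, not $j^G_N$, as your reading correctly assumes).
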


\subsection{The $*$ multiplication.}

In the final section, we will need a robust way of computing multiplication in the double Burnside ring. It will be worthwhile to digest the $*$ multiplication a bit more.
We consider the general setting where $G, H$, and $K$ are
finite groups. The following is a classic lemma due to Zassenhaus.

\begin{lemma}[Butterfly Lemma]
Let $(A,B)$ and $(C,D)$ be two sections of $G$. Then there exists a canonical isomorphism 
\[\beta(A',B';C',D'):C'/D'\to A'/B'\]
where $B\leqslant B'\trianglelefteqslant A'\leqslant A$ and $D\leqslant D'\trianglelefteqslant C'\leqslant C$ are defined as
\[ A'=(A\cap C)B,\, B'=(A\cap D)B,\, C'=(C\cap A)D,\,\, \mathrm{and}\,\,\, D'=(C\cap B)D.\]
The isomorphism $\beta(A',B';C',D')$ is uniquely determined by the property that it takes $xD'$ to $xB'$ for all $x\in C\cap A$.
\end{lemma}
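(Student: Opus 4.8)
The statement to prove is the Butterfly (Zassenhaus) Lemma: given two sections $(A,B)$ and $(C,D)$ of $G$, setting $A'=(A\cap C)B$, $B'=(A\cap D)B$, $C'=(C\cap A)D$, $D'=(C\cap B)D$, there is a canonical isomorphism $\beta(A',B';C',D'):C'/D'\to A'/B'$ uniquely determined by $xD'\mapsto xB'$ for $x\in C\cap A$.

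Let me sketch a proof.

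The plan is to first verify the containments and normalities claimed, then exhibit the isomorphism via a standard diamond argument, then verify it is well-defined and uniquely pinned down by the stated property.

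Step 1: Verify $B\leqslant B'\trianglelefteqslant A'\leqslant A$ and symmetrically $D\leqslant D'\trianglelefteqslant C'\leqslant C$. Since $B\trianglelefteqslant A$ and $C\cap A, D\cap A \leqslant A$... wait, $A\cap D$ means $A\cap D$. We have $B \leqslant B' = (A\cap D)B$ trivially. And $B' \leqslant A' = (A\cap C)B$ since $A\cap D \leqslant A\cap C$ (because $D\leqslant C$). And $A' \leqslant A$ since $A\cap C\leqslant A$ and $B\leqslant A$. Normality $B'\trianglelefteqslant A'$: this is the key subtle point and follows from the Butterfly/Zassenhaus argument — need $(A\cap D)B$ normal in $(A\cap C)B$. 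This uses that $B\trianglelefteqslant A$ and $D\trianglelefteqslant C$.

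Step 2: The core isomorphism. Consider the subgroup $A\cap C$ of $G$. One shows $A'=(A\cap C)B$ and $A'/B' \cong (A\cap C)/((A\cap C)\cap B')$... Actually the classical approach: Let $U = A\cap C$. Show $(A\cap C)\cap B' = (A\cap D)(B\cap C)$. Then by the second isomorphism theorem, $A'/B' = (A\cap C)B'/B' \cong (A\cap C)/((A\cap C)\cap B') = (A\cap C)/(A\cap D)(B\cap C)$. By the symmetric argument, $C'/D' \cong (C\cap A)/(C\cap B)(D\cap A)$. Since $(A\cap C) = (C\cap A)$ and $(A\cap D)(B\cap C) = (C\cap B)(D\cap A)$ as subgroups, these two quotients are literally the same group, giving the isomorphism $\beta$.

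Step 3: Trace through the identifications to confirm $\beta$ sends $xD'$ to $xB'$ for $x\in C\cap A = A\cap C$: under $C'/D' \cong (C\cap A)/(\cdots)$, the coset $xD'$ corresponds to $x(C\cap B)(D\cap A)$, which corresponds to $x(A\cap D)(B\cap C)$, which corresponds to $xB'$ under the inverse of $A'/B'\cong (A\cap C)/(\cdots)$. Uniqueness: $C'=(C\cap A)D$ so $C'/D'$ is generated by the images of elements of $C\cap A$, hence any homomorphism out of $C'/D'$ is determined by its values on such cosets.

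The main obstacle is Step 1's normality claim $B'\trianglelefteqslant A'$ (equivalently the identification $(A\cap C)\cap (A\cap D)B = (A\cap D)(B\cap C)$), which is the genuine content of Zassenhaus's lemma; everything else is bookkeeping with isomorphism theorems.

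\begin{proof}
This is the classical lemma of Zassenhaus; we recall the argument. Write $U=A\cap C=C\cap A$.

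First we check the asserted containments and normality. Clearly $B\leqslant B'=(A\cap D)B\leqslant (A\cap C)B=A'\leqslant A$ (the middle containment because $D\leqslant C$ gives $A\cap D\leqslant A\cap C$), and symmetrically $D\leqslant D'\leqslant C'\leqslant C$. For $B'\trianglelefteqslant A'$: since $D\trianglelefteqslant C$ we have $A\cap D\trianglelefteqslant A\cap C$, and since $B\trianglelefteqslant A$ the subgroup $B$ normalizes $B'=(A\cap D)B$; as $A'=(A\cap C)B$ is generated by $A\cap C$ and $B$, both of which normalize $B'$, we get $B'\trianglelefteqslant A'$. The claim $D'\trianglelefteqslant C'$ is symmetric.

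Next, the key computation is the identity of subgroups
\[
U\cap B'=(A\cap D)(B\cap C),
\]
where the right-hand side is a subgroup because $A\cap D$ normalizes $B\cap C$ (indeed $A\cap D\leqslant C$ normalizes $B\cap C$ since $D\trianglelefteqslant C$, and $A\cap D\leqslant A$ normalizes $B$ since $B\trianglelefteqslant A$). The inclusion $\supseteq$ is immediate from $A\cap D\leqslant U\cap B'$ and $B\cap C\leqslant U\cap B'$. For $\subseteq$, take $u\in U\cap B'$ and write $u=db$ with $d\in A\cap D$, $b\in B$; then $b=d^{-1}u\in A\cap C=U$ (as $d\in A\cap D\leqslant U$ and $u\in U$), so $b\in B\cap C$ and $u=db\in (A\cap D)(B\cap C)$.

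Now the second isomorphism theorem applied inside $A'=UB'$ gives
\[
A'/B'=UB'/B'\cong U/(U\cap B')=U/(A\cap D)(B\cap C).
\]
By the symmetric argument inside $C'=UD'$,
\[
C'/D'\cong U/(U\cap D')=U/(C\cap B)(D\cap A).
\]
Since $(A\cap D)(B\cap C)=(D\cap A)(C\cap B)$ as subgroups of $U$, the two right-hand quotients coincide, and composing the displayed isomorphisms yields an isomorphism $\beta(A',B';C',D'):C'/D'\to A'/B'$.

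Finally we verify the normalization property and uniqueness. For $x\in U=C\cap A$, the first isomorphism above (read as $C'/D'\to U/(U\cap D')$) sends $xD'$ to $x(U\cap D')$, and the second (read backwards, $U/(U\cap B')\to A'/B'$) sends $x(U\cap B')$ to $xB'$; since $U\cap D'=U\cap B'$ by the computation above, $\beta$ sends $xD'$ to $xB'$. Because $C'=(C\cap A)D=UD'$, every element of $C'/D'$ is the image of some $x\in U$, so a homomorphism out of $C'/D'$ is determined by its values on the cosets $xD'$ with $x\in C\cap A$; hence $\beta(A',B';C',D')$ is the unique isomorphism with the stated property.
\end{proof}
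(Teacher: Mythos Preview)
Your proof is correct and follows the standard route to Zassenhaus's Butterfly Lemma. The paper itself does not give a proof of this statement; it simply introduces it as ``a classic lemma due to Zassenhaus'' and states it without argument, so there is nothing to compare against beyond noting that you have supplied exactly the classical proof the paper is implicitly invoking. One tiny expository quibble: in the parenthetical justifying that $(A\cap D)(B\cap C)$ is a subgroup, the clause ``$A\cap D\leqslant C$ normalizes $B\cap C$ since $D\trianglelefteqslant C$'' is slightly off---the relevant facts are that $A\cap D\leqslant A$ normalizes $B$ (as $B\trianglelefteqslant A$) and $A\cap D\leqslant C$ trivially normalizes $C$, hence it normalizes $B\cap C$; the hypothesis $D\trianglelefteqslant C$ is not what is used at that point. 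This does not affect the validity of the argument.
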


Recall Goursat's Lemma (\ref{GoursatLemma}) allows us to describe any subgroup of $G\times H$ as a quintuple $(A,B,\varphi,C,D)$ where the pairs
$(A,B)$ and $(C,D)$ are sections of $G$ and $H$, respectively, and $\varphi:C/D\to A/B$ is a uniquely determined isomorphism. For subgroups
$L\leqslant G\times H$ and $M\leqslant H\times G$, the following lemma describes explicitly the product $L*M$ in these terms. Both the lemma
and subsequent diagram that illustrates it can be found in \cite{boltje2013ghost}.

\begin{lemma}[\cite{boltje2013ghost}, 2.7] \label{explicitStarComputation} 
Let $L=(P_1,K_1, \varphi, P_2,K_2)\leqslant G\times H$ and $M=(P_3,K_3,\psi, P_4,K_4)\leqslant H\times K$. Then
\[L*M=(P_1',K_1',\overline{\varphi}\circ\beta(P_2',K_2';P_3',K_3')\circ\overline{\psi},P_4',K_4')\leqslant G\times K\]
where 
\begin{itemize}
\item $K_2\leqslant K_2'\trianglelefteqslant P_2'\leqslant P_2$ and $K_3\leqslant K_3'\trianglelefteqslant P_3'\leqslant P_3$ are determined by the
Butterfly Lemma applied to the sections $(P_2,K_2)$ and $(P_3,K_3)$ of $H$;
\item $K_1\leqslant K_1'\trianglelefteqslant P_1'\leqslant P_1$ and $K_4\leqslant K_4'\trianglelefteqslant P_4'\leqslant P_4$ are determined by
\[P_1'/K_1=\varphi(P_2'/K_2),\quad K_1'/K_1=\varphi(K_2'/K_2)\]
\[P_4'/K_4=\psi^{-1}(P_3'/K_3),\quad K_4'/K_4=\psi^{-1}(K_3'/K_3);\]
\item the isomorphisms $\overline{\varphi}:P_2'/K_2'\to P_1'/K_1'$ and $\overline{\psi}:P_4'/K_4'\to P_3'/K_3'$ are induced by the isomorphism
$\varphi$ and $\psi$, respectively.
\end{itemize}
\end{lemma}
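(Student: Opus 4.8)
The plan is to verify the formula by computing, directly from the set-theoretic description of $L*M$, each of the five entries of its Goursat quintuple and matching them with the quantities named in the statement; by the uniqueness clause of Goursat's Lemma (\ref{GoursatLemma}) this will suffice. Write $N:=L*M\leqslant G\times K$, and recall $L=\{(g,h)\in P_1\times P_2\mid\varphi(hK_2)=gK_1\}$ and $M=\{(h,k)\in P_3\times P_4\mid\psi(kK_4)=hK_3\}$.

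First I would treat the $G$-side entries $P_1(N)$ and $K_1(N)$. A pair $(g,k)$ lies in $N$ exactly when some $h\in H$ satisfies $(g,h)\in L$ and $(h,k)\in M$; any such $h$ lies in $P_2\cap P_3$, and conversely any $h\in P_3$ can be completed to a pair $(h,k)\in M$ (take $k\in\psi^{-1}(hK_3)$). Hence $g\in P_1(N)$ iff the coset $\varphi^{-1}(gK_1)$ of $K_2$ in $P_2$ meets $P_2\cap P_3$, while $g\in K_1(N)$ iff it meets $P_2\cap K_3$ (taking $k=1$ forces $h\in K_3$). Since a coset $xK_2$ of $K_2$ in $P_2$ meets $P_2\cap P_3$ precisely when $x\in(P_2\cap P_3)K_2=P_2'$, and meets $P_2\cap K_3$ precisely when $x\in(P_2\cap K_3)K_2=K_2'$ --- with $P_2',K_2'$ the subgroups produced by the Butterfly Lemma --- these conditions become $\varphi^{-1}(gK_1)\subseteq P_2'$, resp. $\varphi^{-1}(gK_1)\subseteq K_2'$, which by the prescribed relations $P_1'/K_1=\varphi(P_2'/K_2)$ and $K_1'/K_1=\varphi(K_2'/K_2)$ say exactly $g\in P_1'$, resp. $g\in K_1'$. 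Thus $P_1(N)=P_1'$ and $K_1(N)=K_1'$. For the $K$-side I would avoid repeating the argument and instead invoke $M^\circ*L^\circ=(L*M)^\circ=N^\circ$ (Lemma~\ref{oppositeProp}(ii)): passing to opposites swaps the two sides, so $P_2(N)=P_1(N^\circ)$ and $K_2(N)=K_1(N^\circ)$, and since $M^\circ=(P_4,K_4,\psi^{-1},P_3,K_3)$ and $L^\circ=(P_2,K_2,\varphi^{-1},P_1,K_1)$, the identities just established applied to $M^\circ*L^\circ$ yield $P_2(N)=P_4'$ and $K_2(N)=K_4'$.

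Next I would pin down the Goursat isomorphism $\theta_N\colon P_4'/K_4'\to P_1'/K_1'$ of $N$. By construction $\varphi$ carries the subgroup $P_2'/K_2$ of $P_2/K_2$ isomorphically onto $P_1'/K_1$ and its normal subgroup $K_2'/K_2$ onto $K_1'/K_1$ (and symmetrically $\psi$ carries $P_4'/K_4$, $K_4'/K_4$ onto $P_3'/K_3$, $K_3'/K_3$), so in particular $K_1'\trianglelefteqslant P_1'$ and $K_4'\trianglelefteqslant P_4'$ --- compatibly with $K_2'\trianglelefteqslant P_2'$, $K_3'\trianglelefteqslant P_3'$ from the Butterfly Lemma --- and $\varphi,\psi$ induce isomorphisms $\overline\varphi\colon P_2'/K_2'\to P_1'/K_1'$, $\overline\psi\colon P_4'/K_4'\to P_3'/K_3'$. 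Now take $(g,k)\in N$ with $g\in P_1'$, $k\in P_4'$, and choose a mediating $h\in P_2\cap P_3$; automatically $h\in P_2'\cap P_3'$, since $P_2\cap P_3\subseteq P_2'$ and $P_2\cap P_3\subseteq P_3'$, and $\varphi(hK_2)=gK_1$, $\psi(kK_4)=hK_3$. Then $\overline\psi(kK_4')=hK_3'$; the defining property of the Butterfly isomorphism gives $\beta(P_2',K_2';P_3',K_3')(hK_3')=hK_2'$; and $\overline\varphi(hK_2')=gK_1'$. Composing, $(\overline\varphi\circ\beta\circ\overline\psi)(kK_4')=gK_1'$, which is precisely the value $\theta_N(kK_4')$ demanded by Goursat's Lemma, so by uniqueness $\theta_N=\overline\varphi\circ\beta\circ\overline\psi$. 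Assembling the five entries then gives the displayed formula.

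The argument is essentially an exercise in bookkeeping, so the main obstacle I anticipate is keeping the conventions aligned: which side of $G\times K$ counts as ``first'' for $N$, the directions in which $\varphi$, $\psi$ and the Butterfly isomorphism $\beta$ run, and --- the one genuinely load-bearing point --- checking that the Butterfly subgroups $K_2'\trianglelefteqslant P_2'$ and $K_3'\trianglelefteqslant P_3'$ are carried by $\varphi$ and $\psi$ onto $K_1'\trianglelefteqslant P_1'$ and $K_4'\trianglelefteqslant P_4'$, which is exactly what makes $\overline\varphi$, $\overline\psi$, and hence their composite with $\beta$, well defined.
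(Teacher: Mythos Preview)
Your argument is correct: the direct computation of $P_1(N)$, $K_1(N)$ from the description of $L*M$, the use of $(L*M)^\circ=M^\circ*L^\circ$ to transport those identities to the $K$-side, and the verification of the Goursat isomorphism via the defining property of $\beta$ all go through as written. Note, however, that the paper does not give its own proof of this lemma at all---it is quoted from \cite{boltje2013ghost} with only an illustrative diagram---so there is nothing to compare against; your direct verification is the natural proof and supplies what the paper omits.
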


\begin{center}
\begin{tikzcd}
  
  &  &  & H\arrow[ddd, dash]  &  &  &  & H\arrow[dddd, dash] &  &  &  \\

  &  &  &  &  &  &  &  &  &  & K\arrow[ddd, dash] \\

 G\arrow[d, dash] &  &  &  &  &  &  &  &  &  &  \\

 P_1\arrow[rrr, dash]\arrow[dd, dash] &  & |[alias=H_4]| & P_2\arrow[dd, dash] &  &  &  &  &  &  &  \\

  &  &  &  &  &  &  & P_3\arrow[rrr,dash]\arrow[d, dash] & |[alias=C_4]| &  & P_4\arrow[d, dash] \\

 P_1'\arrow[rrr,dash, dashed]\arrow[d, dash] & |[alias=I_4]| &  & P_2'\arrow[rrrr, dash, dashed]\arrow[d, dash] &  & |[alias=E_4]| &  & P_3'\arrow[d, dash]\arrow[rrr,dash, dashed] &  & |[alias=A_4]| & P'_4\arrow[d, dash] \\

 K_1'\arrow[rrr,dash, dashed]\arrow[d, dash] & |[alias=J_4]|\arrow[from=J_4, to=I_4, leftrightarrow, "\overline{\varphi}"']  &  & K_2'\arrow[rrrr, dash, dashed]\arrow[d, dash] &  & |[alias=F_4]|\arrow[from=F_4, to=E_4, leftrightarrow, "\beta"'] &  & K_3'\arrow[dd, dash]\arrow[rrr,dash, dashed] &  & |[alias=B_4]|\arrow[from=B_4, to=A_4, leftrightarrow, "\overline{\psi}"'] & K'_4\arrow[dd, dash] \\

 K_1\arrow[rrr,dash]\arrow[d, dash] &  & |[alias=G_4]|\arrow[from=G_4, to=H_4, leftrightarrow, "\varphi"' near end] & K_2\arrow[dd, dash] &  &  &  &  &  &  &  \\

 1 &  &  &  &  &  &  & K_3\arrow[rrr,dash]\arrow[d, dash] & |[alias=D_4]|\arrow[from=D_4, to=C_4, leftrightarrow, "\psi"' near start] &  & K_4\arrow[dd, dash] \\

  &  &  & 1 &  &  &  & 1 &  &  &  \\

  &  &  &  &  &  &  &  &  &  & 1 
\end{tikzcd}
\end{center}

\subsection{The bifree double Burnside ring}

There are a few notable consequences of Proposition~\ref{MackeyFormula} and Lemma~\ref{explicitStarComputation}. 
Given a finite group $G$ and $M\leqslant G\times G$, if $|P_1(M)/K_1(M)|=|P_2(M)/K_2(M)|<|G|$, then Lemma~\ref{explicitStarComputation} implies 
$|P_1(M*L)/K_1(M*L)|=|P_2(M*L)/K_2(M*L)|<|G|$ and $|P_1(L*M)/K_1(L*M)|=|P_2(L*M)/K_2(L*M)|<|G|$ for all subgroups $L\leqslant G\times G$. Thus,
Proposition~\ref{MackeyFormula} implies that the elements of $B(G,G)$ spanned by the basis elements $[G\times G/M]$ where 
$|P_1(M)/K_1(M)|=|P_2(M)/K_2(M)|<|G|$ form and ideal of $B(G,G)$. 

\begin{prop}[\cite{BoucBook}, $4.3.2$]\label{outGIdeal}
Let $G$ be a finite group and let $I_G$ denote the subgroup of $B(G,G)$ spanned by elements $[G\times G/M]$, where $M\leqslant G\times G$ and
$|P_1(M)/K_1(M)|=|P_2(M)/K_2(M)|<|G|$. Then $I_G$ is an ideal of $B(G,G)$ and there is a surjective ring homomorphism
\[\rho :B(G,G)\to \bZ\mathrm{Out}(G),\]
with $I_G=\ker(\rho)$, that sends $[G\times G/M]\mapsto 0$ if $|P_1(M)/K_1(M)|=|P_2(M)/K_2(M)|<|G|$ and $[G\times G/M]\mapsto \overline{\varphi}$ 
if $P_1(M)=P_2(M)=G$ and 
$K_1(M)=K_2(M)=1$, where $\varphi$ is the uniquely determined automorphism of $G$ indicated by the Goursat Lemma and $\overline{\varphi}$ is its image in 
$\mathrm{Out}(G).$
\end{prop}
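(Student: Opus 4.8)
The plan is to verify that $\rho$ is a well-defined ring homomorphism by checking it respects the product structure on the basis $\{[G\times G/M]\}_{M\in\sS_{G\times G}}$, and then identify its kernel. First I would set up notation: write $\mathcal{B}_{<}$ for the set of (conjugacy classes of) subgroups $M\leqslant G\times G$ with $|P_1(M)/K_1(M)|<|G|$, and $\mathcal{B}_{=}$ for those with $P_1(M)=P_2(M)=G$ and $K_1(M)=K_2(M)=1$. Goursat's Lemma (\ref{GoursatLemma}) identifies the latter set, up to $G\times G$-conjugacy, with $\mathrm{Out}(G)$: a subgroup with full projections and trivial kernels is exactly $L_{(G,1),\varphi,(G,1)}=\{(g,h)\mid \varphi(h)=g\}$ for an automorphism $\varphi$, and conjugating by $(a,b)\in G\times G$ replaces $\varphi$ by $x\mapsto a\varphi(b^{-1}xb)a^{-1}$, i.e. by $c_a\circ\varphi\circ c_b^{-1}$, so the conjugacy class records precisely $\overline\varphi\in\mathrm{Out}(G)$. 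Thus the $\bZ$-linear map $\rho$ sending $\mathcal{B}_{<}$-basis elements to $0$ and $[G\times G/L_{(G,1),\varphi,(G,1)}]$ to $\overline\varphi$ is well-defined on $B(G,G)$, and it is clearly surjective since every class in $\mathrm{Out}(G)$ is hit.

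Next I would check multiplicativity. By Proposition~\ref{MackeyFormula}, for $L,M\leqslant G\times G$,
\[
[G\times G/L]\circ_G[G\times G/M]=\sum_{h\in[P_2(L)\backslash G/P_1(M)]}[G\times G/(L*\,{}^{(h,1)}M)].
\]
The key computational input is the observation already recorded in the excerpt (just before Proposition~\ref{outGIdeal}): if $|P_1(M)/K_1(M)|<|G|$ then every term $L*\,{}^{(h,1)}M$ on the right also has first (and second) section strictly smaller than $G$, by Lemma~\ref{explicitStarComputation}, because the butterfly construction forces $|P_1'/K_1'|\leqslant|P_2(L)/K_2(L)|$ and $\leqslant|P_3/K_3|=|P_1(M)/K_1(M)|$. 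This shows $I_G$ is a two-sided ideal, so $\rho$ annihilates it, and it reduces the check of multiplicativity to the case where both $L$ and $M$ lie in $\mathcal{B}_{=}$. In that case write $L=L_{(G,1),\varphi,(G,1)}$, $M=L_{(G,1),\psi,(G,1)}$; then $P_2(L)=G=P_1(M)$, so the double-coset sum has a single term $h=1$, and Lemma~\ref{explicitStarComputation} (all of $P_2',K_2',P_3',K_3'$ collapse to $G$ and $1$ respectively, $\beta=\mathrm{id}$) gives $L*M=L_{(G,1),\varphi\circ\psi,(G,1)}$, hence $\rho([G\times G/L]\circ_G[G\times G/M])=\overline{\varphi\circ\psi}=\overline\varphi\cdot\overline\psi$. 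Since $\Id_G=[G\times G/\Delta(G)]$ corresponds to the identity automorphism, $\rho$ is a unital ring homomorphism.

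Finally I would pin down the kernel. The set $\mathcal{B}_{<}\cup\mathcal{B}_{=}$ does not exhaust $\sS_{G\times G}$ — there are subgroups with $|P_1(M)/K_1(M)|=|G|$ but $K_i(M)\neq1$ is impossible (equal orders force trivial kernels once the projection is all of $G$) — so in fact every basis element is either in $I_G$ or maps to a group element, and these group elements are $\bZ$-linearly independent in $\bZ\mathrm{Out}(G)$ while being indexed bijectively by $\mathcal{B}_{=}$. Therefore $\ker\rho$ is spanned exactly by $\mathcal{B}_{<}$, i.e. $\ker\rho=I_G$, and $\rho$ descends to an isomorphism $B(G,G)/I_G\cong\bZ\mathrm{Out}(G)$. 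The step I expect to be the main obstacle is the careful bookkeeping in Lemma~\ref{explicitStarComputation} needed to confirm that $L*\,{}^{(h,1)}M\in\mathcal{B}_{<}$ whenever $M\in\mathcal{B}_{<}$, uniformly in $L$ and in the double-coset representative $h$ — this is where one must be attentive to the butterfly subgroups and to the fact that a strict drop in one of the middle sections propagates to a strict drop in $|P_1'/K_1'|$; the case $L,M\in\mathcal{B}_{=}$ and the linear-independence argument for the kernel are then routine.
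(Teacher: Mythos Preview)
Your proposal is correct and mirrors exactly what the paper does: the paper does not give a self-contained proof here (the result is cited from Bouc's book), but the paragraph immediately preceding the proposition sketches the ideal argument via Lemma~\ref{explicitStarComputation} and Proposition~\ref{MackeyFormula}, and Remark~\ref{bifreeBGG} right afterwards carries out the $L*M=(G,1,\varphi\circ\psi,G,1)$ computation for the $\mathcal{B}_{=}$ case, just as you do.

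One small writing glitch: in your final paragraph you assert that $\mathcal{B}_{<}\cup\mathcal{B}_{=}$ \emph{does not} exhaust $\sS_{G\times G}$, then immediately argue (correctly) that $|P_1(M)/K_1(M)|=|G|$ forces $P_1(M)=G$ and $K_1(M)=1$, hence the union \emph{does} exhaust. The mathematics and the conclusion are right; only the opening clause of that sentence is misstated.
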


\begin{rmk}\label{bifreeBGG}
The map in Proposition~\ref{outGIdeal} is a retraction to the ring homomorphism
\[\eta:\bZ\mathrm{Out}(G)\to B(G,G), \]
\[\overline{\varphi}\mapsto [G\times G/M]\]
where $M\leqslant G\times G$ is defined by the quintuple $(G,1,\varphi,G,1)$. Indeed, the map is well-defined, since the basis
elements of $B(G,G)$ are conjugation invariant. Moreover, if $L=(G,1,\psi,G,1)$, then Proposition~\ref{MackeyFormula} implies
\[[G\times G/M]\cdot_G[G\times G/L]=[G\times G/(M*L)]\]
and Lemma~\ref{explicitStarComputation} implies that $M*L=(G,1,\varphi\circ\psi,G,1)$.
\end{rmk}

If $M\leqslant G\times G$ such that $K_1(M)=K_2(M)=1$, then we say $M$ is \emph{bifree}. Through Goursat's Lemma, bifree subgroups can be identified with
notation $\Delta(A,\varphi,B)$ or $\Delta_\varphi(A,B)$, where $A=P_1(M)\leqslant G$, $B=P_2(M)\leqslant G$ and $\varphi:A\to B$ is an isomorphism. If $A=B$, then we write
$\Delta_\varphi(A,B)=\Delta_\varphi(A)$. In the case where $\varphi$ is the identity, we simply write $\Delta(A)$. Note that $\Delta(A,\varphi,B)=(A,\{1\};B,\{1\})_\varphi$ in the notation of 
Remark~\ref{encodeRmk}.

It is straightforward
to check, using Lemma~\ref{explicitStarComputation}, that for $M,L\leqslant G\times G$, if $K_i(M)=K_i(L)=1$ for $i=1,2$, then $K_i(M*L)=1$ for $i=1,2$.
It follows by Proposition~\ref{MackeyFormula} that the subset $B^\Delta(G,G)\subset B(G,G)$ spanned by the elements $[G\times G/M]$, where $M$ is bifree, is a subring.
We call $B^\Delta(G,G)$ \emph{the bifree double Burnside ring.}

It is clear that for any subgroup $M=(A,B,\varphi,C,D)\leqslant G\times G$, we have $M^\circ=(C,D,\varphi^{-1},A,B)$. Thus if $M$ is bifree, so is $M^\circ$ 
and Proposition~\ref{oppositeProp} implies that taking opposite bisets induces a group automorphism on $B^\Delta(G,G)$.

We end this section with a well-known embedding of $B(G)$ into $B^\Delta(G,G)$. The proof can be found in [\cite{BoucBook}, 2.5.5-2.5.8].

\begin{prop}\label{burnsideRingEmbedding}
Let $G$ be a finite group. The map
\[\iota:B(G)\to B^\Delta(G,G)\]
\[[G/L]\mapsto [G\times G/\Delta(L)]\]
is an injective ring homomorphism.
\end{prop}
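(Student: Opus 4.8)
The plan is to verify that $\iota$ is a well-defined, injective ring homomorphism by reducing each property to the combinatorics of bifree subgroups already developed in the excerpt. First I would check that $\iota$ is well-defined on basis elements: the assignment $[G/L] \mapsto [G\times G/\Delta(L)]$ depends only on the conjugacy class of $L$ in $G$, and since $\Delta(\,^gL) = \,^{(g,g)}\Delta(L)$, conjugate subgroups of $G$ go to conjugate (hence equal in $B(G,G)$) subgroups of $G\times G$. Extending $\bZ$-linearly gives a group homomorphism $B(G) \to B^\Delta(G,G)$, and injectivity is immediate because the basis $\{[G/L]\}_{L \in \sS_G}$ maps bijectively onto the subset $\{[G\times G/\Delta(L)]\}_{L \in \sS_G}$ of the $\bZ$-basis of $B^\Delta(G,G)$ consisting of diagonal (identity-isomorphism) bifree subgroups.

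The substantive point is multiplicativity. I would first note $\iota([G/G]) = [G\times G/\Delta(G)] = [G\times G/\Delta(G,\Id,G)] = \Id_G$, so $\iota$ preserves the identity. For the product, fix subgroups $L, L' \leqslant G$ and apply the Mackey-type formula of Proposition~\ref{MackeyFormula} to
\[
[G\times G/\Delta(L)]\circ_G[G\times G/\Delta(L')] = \sum_{h\in [P_2(\Delta(L))\backslash G/P_1(\Delta(L'))]}[G\times G/(\Delta(L)*\,^{(h,1)}\Delta(L'))].
\]
Here $P_2(\Delta(L)) = L$ and $P_1(\Delta(L')) = L'$, so the index set is $[L\backslash G/L']$. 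The key computation is that $\Delta(L)*\,^{(h,1)}\Delta(L') = \Delta(L\cap\,^hL')$: using the explicit $*$-description (Lemma~\ref{explicitStarComputation}) with $\varphi,\psi$ both identity maps and both lower kernels trivial, the Butterfly Lemma sections attached to $(L,1)$ and $(\,^hL',1)$ in $G$ collapse to $P_2' = P_3' = L\cap\,^hL'$ with trivial $K_2' = K_3'$, all the induced isomorphisms are identities, and one reads off the first projection $L\cap\,^hL'$ with trivial first kernel. Hence the sum becomes $\sum_{h\in[L\backslash G/L']}[G\times G/\Delta(L\cap\,^hL')]$, which is exactly $\iota$ applied to the Mackey/double-coset formula $[G/L]\cdot[G/L'] = \sum_{h\in[L\backslash G/L']}[G/(L\cap\,^hL')]$ in $B(G)$.

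The main obstacle is the bookkeeping in the previous paragraph: one must carefully trace the five-term data $(P_1',K_1',\overline\varphi\circ\beta\circ\overline\psi,P_4',K_4')$ through Lemma~\ref{explicitStarComputation} in the special case of diagonal bifree subgroups and confirm both that the kernels stay trivial (so the output is again of the form $\Delta(-)$) and that the middle isomorphism is the identity on $L\cap\,^hL'$ (so the output is genuinely $\Delta(L\cap\,^hL')$ and not $\Delta_\theta(L\cap\,^hL')$ for some nontrivial $\theta$). Once that identification is in hand, everything else is formal: the double-coset multiplication rule for transitive $G$-sets is classical, and matching it term-by-term with the Mackey formula for $B(G,G)$ finishes the proof. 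Since all of this is internal to $B^\Delta(G,G)$ — which we have already shown is a subring closed under the relevant operations — no further justification that the products land in the right place is needed.
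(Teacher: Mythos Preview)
Your argument is essentially correct, but there is one small inaccuracy worth flagging. When you compute $\Delta(L)*\,^{(h,1)}\Delta(L')$ via Lemma~\ref{explicitStarComputation}, the input on the right is $({}^hL',1,c_h,L',1)$ with $c_h$ conjugation by $h$, so the composite isomorphism $\overline{\varphi}\circ\beta\circ\overline{\psi}$ comes out as $c_h$, not the identity; the resulting subgroup is $(L\cap{}^hL',1,c_h,{}^{h^{-1}}L\cap L',1)$ rather than $\Delta(L\cap{}^hL')$. This does not break the proof, since conjugating by $(1,h)$ carries that subgroup to $\Delta(L\cap{}^hL')$, and basis elements of $B(G,G)$ depend only on the $G\times G$-conjugacy class. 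You should phrase the key step as ``$\Delta(L)*\,^{(h,1)}\Delta(L')$ is $G\times G$-conjugate to $\Delta(L\cap{}^hL')$'' rather than asserting equality of subgroups.

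As for comparison with the paper: the paper does not actually give a proof of this proposition but simply cites \cite{BoucBook}, 2.5.5--2.5.8. Your direct verification via the Mackey formula and the explicit $*$-computation is exactly the kind of argument one finds there, so once the conjugacy point above is stated correctly your write-up is a complete and self-contained proof.
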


\section{An inflation map between units}

We begin this section with an observation that we have a natural embedding of double Burnside rings of quotient groups, in the sense that there exists an injective 
\emph{rng} morphism. Recall, a \emph{rng} is a set with the same properties of a ring, without the assumption of an identity. 
If $A$ and $B$ are rngs, then a \emph{rng morphism} is a map that is both additive and multiplicative. We denote the category of rngs by $\mathrm{{\bf Rng}}$.

\begin{lemma}\label{DBRrng}
Let $G$ be a finite group and $N\trianglelefteqslant G$. Then there is an injective rng morphism
\[B(G/N,G/N)\to B(G,G)\]
\[a\mapsto \Inf^G_{G/N} \circ a\circ \Def^{G}_{G/N}.\]
\end{lemma}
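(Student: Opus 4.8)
The plan is to verify the three properties directly: additivity, multiplicativity, and injectivity. Additivity is immediate, since $\circ_G$ (and $\circ_{G/N}$) is bilinear, so $a \mapsto \Inf^G_{G/N}\circ a\circ\Def^G_{G/N}$ is $\bZ$-linear. For multiplicativity, let $a,b\in B(G/N,G/N)$ and compute, using associativity of $\circ$ freely,
\[
\bigl(\Inf^G_{G/N}\circ a\circ\Def^G_{G/N}\bigr)\circ\bigl(\Inf^G_{G/N}\circ b\circ\Def^G_{G/N}\bigr)
= \Inf^G_{G/N}\circ a\circ\bigl(\Def^G_{G/N}\circ\Inf^G_{G/N}\bigr)\circ b\circ\Def^G_{G/N}.
\]
By Proposition~\ref{elementaryBisetProp}(iv), $\Def^G_{G/N}\circ\Inf^G_{G/N}=\Id_{G/N}$, so the right-hand side collapses to $\Inf^G_{G/N}\circ(a\circ b)\circ\Def^G_{G/N}$, which is the image of $a\circ b$. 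Hence the map is a rng morphism; note it need not be a ring morphism because it sends $\Id_{G/N}$ to the idempotent $j^G_N$ rather than to $\Id_G$.

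For injectivity, the key point is that this map has a one-sided inverse on the left: define $r:B(G,G)\to B(G/N,G/N)$ by $r(x)=\Def^G_{G/N}\circ x\circ\Inf^G_{G/N}$. Then for any $a\in B(G/N,G/N)$,
\[
r\bigl(\Inf^G_{G/N}\circ a\circ\Def^G_{G/N}\bigr)
= \Def^G_{G/N}\circ\Inf^G_{G/N}\circ a\circ\Def^G_{G/N}\circ\Inf^G_{G/N}
= \Id_{G/N}\circ a\circ\Id_{G/N}=a,
\]
again invoking Proposition~\ref{elementaryBisetProp}(iv) twice. Since the composite $r\circ(\text{the map})$ is the identity on $B(G/N,G/N)$, the map is injective.

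I expect no real obstacle here: the whole argument rests on the single identity $\Def^G_{G/N}\circ\Inf^G_{G/N}=\Id_{G/N}$ together with associativity, both already established in the preliminaries. The only point worth stating carefully is that $r$ is merely additive (it is also multiplicative by the same collapsing computation, but that is not needed for injectivity), so it serves as an additive retraction and that suffices to conclude the map is injective. If one wanted a cleaner packaging, one could phrase it as: $e:=j^G_N=\Inf^G_{G/N}\circ\Def^G_{G/N}$ is an idempotent, the corner $eB(G,G)e$ is a ring with identity $e$, and $a\mapsto\Inf^G_{G/N}\circ a\circ\Def^G_{G/N}$ is an isomorphism of rings $B(G/N,G/N)\xrightarrow{\sim} eB(G,G)e$ whose composite with the (non-unital) inclusion $eB(G,G)e\hookrightarrow B(G,G)$ is the rng morphism in question; injectivity is then automatic. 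Either presentation works, and I would include the explicit retraction since it makes the injectivity transparent and reusable for the subsequent construction of $\dBInf^G_{G/N}$ on unit groups.
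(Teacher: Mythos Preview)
Your proof is correct and follows essentially the same approach as the paper: additivity from bilinearity, multiplicativity from the identity $\Def^G_{G/N}\circ\Inf^G_{G/N}=\Id_{G/N}$, and injectivity via the explicit additive retraction $x\mapsto\Def^G_{G/N}\circ x\circ\Inf^G_{G/N}$. Your additional remark about the corner ring $j^G_N\,B(G,G)\,j^G_N$ is a nice repackaging but not in the paper.
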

\begin{proof}
The additivity follows from the distributivity of the tensor product of bisets. Recall from 
Proposition~\ref{elementaryBisetProp} $\Def^{G}_{G/N}\circ\Inf^{G}_{G/N}=\Id_{G/N}$,
thus 
\[\Inf^G_{G/N}\circ ab\circ\Def^{G}_{G/N}\]
\[=\Inf^G_{G/N}\circ a\circ\Def^{G}_{G/N}\circ\Inf^G_{G/N}\circ b\circ\Def^{G}_{G/N},\]
 for all $a,b\in B(G/N,G/N)$, so the map is multiplicative. 
 
The injectivity of this map follows from the fact there is a group homomorphism, defined
\[B(G,G)\to B(G/N,G/N)\]
\[x\mapsto \Def^{G}_{G/N}\circ x\circ \Inf^G_{G/N},\]
that is its left inverse. Indeed, we have
\[\Def^{G}_{G/N}\circ\Inf^G_{G/N}\circ a\circ\Def^{G}_{G/N}\circ\Inf^G_{G/N}=a\in B(G/N,G/N).\]
\end{proof}

Let $\mathrm{{\bf Rng}_1}$ by the full subcategory of $\mathrm{{\bf Rng}}$ whose objects are rings (with unity). Below is a generalization
of the familiar functor which restricts rings to their group of units.

\begin{lemma}\label{rngFunctor}
There is a functor 
\[(-)^\times:\mathrm{{\bf Rng}_1}\to \mathrm{{\bf Grp}}\]
defined such that, for any $A\in \mathrm{Ob(\mathrm{{\bf Rng}_1})}$, we have
\[A\mapsto A^\times\]
and for any morphism $f:A\to B$ in $\mathrm{{\bf Rng}_1}$, we have
\[f^\times:A^\times\to B^\times\]
\[u\mapsto1_B+f(u-1_A).\]
Moreover, this functor takes monomorphisms to monomorphisms.
\end{lemma}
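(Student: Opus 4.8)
The plan is to verify, in order, that (1) $f^\times$ actually takes values in $B^\times$ and is multiplicative, (2) $(-)^\times$ preserves identities and composition, and (3) it sends monomorphisms to monomorphisms. The one point to keep in mind throughout is that a morphism $f\colon A\to B$ of $\mathrm{{\bf Rng}_1}$ need not be unital: all one knows is that $e:=f(1_A)$ is an idempotent of $B$, because $e^2=f(1_A\cdot 1_A)=f(1_A)=e$, and that $e$ acts as a two-sided identity on the subrng $f(A)$, since $e\,f(a)=f(1_A a)=f(a)=f(a\,1_A)=f(a)\,e$ for all $a\in A$. Using additivity of $f$ to rewrite the defining formula as $f^\times(u)=(1_B-e)+f(u)$ makes this structure visible, and that is the form I would compute with.

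For step (1), I would first record the two elementary identities $(1_B-e)^2=1_B-e$ and $(1_B-e)x=x(1_B-e)=0$ for all $x\in f(A)$. Writing $f^\times(u)=(1_B-e)+f(u)$ and expanding the product $f^\times(u)f^\times(v)$ with $1_B-e$ treated as a single summand, these identities kill the two mixed terms and leave $(1_B-e)+f(u)f(v)=(1_B-e)+f(uv)=f^\times(uv)$; so $f^\times$ is multiplicative on all of $A$, not merely on $A^\times$. Since also $f^\times(1_A)=(1_B-e)+f(1_A)=1_B$, multiplicativity applied to $u$ and $u^{-1}$ gives $f^\times(u)\,f^\times(u^{-1})=f^\times(u^{-1})\,f^\times(u)=1_B$, so $f^\times$ restricts to a map $A^\times\to B^\times$, which is then a group homomorphism.

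Step (2) is a direct computation. For the identity morphism, $(\mathrm{id}_A)^\times(u)=1_A+(u-1_A)=u$. Given $f\colon A\to B$ and $g\colon B\to C$ in $\mathrm{{\bf Rng}_1}$, both $(g\circ f)^\times(u)$ and $g^\times\!\bigl(f^\times(u)\bigr)$ reduce to $1_C+g\bigl(f(u)-f(1_A)\bigr)$: on the left using that $g$ is additive, and on the right because $f^\times(u)-1_B=f(u)-f(1_A)$ by the rewritten formula. Hence $(g\circ f)^\times=g^\times\circ f^\times$, so $(-)^\times$ is a functor.

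Step (3) is the only place that needs a little care, since $\mathrm{{\bf Rng}_1}$ is not a standard category, but it is mild. Monomorphisms in $\mathrm{{\bf Grp}}$ are exactly the injective homomorphisms, and the same holds in $\mathrm{{\bf Rng}_1}$: the polynomial ring $\bZ[x]$ is an object of $\mathrm{{\bf Rng}_1}$, and for each $a\in A$ there is a unital rng morphism $\bZ[x]\to A$ with $x\mapsto a$, two of which agree if and only if the chosen elements agree, so a non-injective morphism cannot be left-cancellable. Granting this, the formula $f^\times(u)=(1_B-e)+f(u)$ shows at once that $f^\times(u)=f^\times(v)$ forces $f(u)=f(v)$, hence $f^\times$ is injective whenever $f$ is, and (3) follows. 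Combining (1)--(3) gives the lemma; I do not anticipate any real obstacle beyond correctly bookkeeping the idempotent $e=f(1_A)$ in step (1).
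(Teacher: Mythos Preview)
Your proof is correct and follows essentially the same direct-verification approach as the paper, which only spells out the composition check and declares the remaining items ``straightforward'' or ``clear.'' Your rewriting $f^\times(u)=(1_B-e)+f(u)$ with $e=f(1_A)$ idempotent is a clean way to organize step~(1) and makes the multiplicativity of $f^\times$ transparent; the paper leaves this step to the reader. You also give an honest argument for why monomorphisms in $\mathrm{{\bf Rng}_1}$ coincide with injections (via evaluation maps from $\bZ[x]$), whereas the paper simply asserts that the monomorphism claim is clear.
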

\begin{proof}
The last statement is clear. It suffices to check that if $u$ is a unit in $A$, then $1_B+f(u-1_A)$ is a unit in $B$, that $f^\times$ is a group homomorphism, and that
composition is well-defined. All are straightforward but we check composition: If $f:A\to B$ and $g:B\to C$ are morphisms from $\mathrm{{\bf Rng}_1}$ and $u\in A^\times$,
then
\[g^\times\circ f^\times (u)=g^\times (1_B+f(u-1_A))=1_C+g(1_B+f(u-1_A)-1_B)\]
\[=1_C+g(f(u-1_A))=1_C+g\circ f(u-1_A)=(g\circ f)^\times(u).\]
\end{proof}

Using this functor, and Lemma~\ref{DBRrng} we get the following structural maps on unit groups of double Burnside rings.

\begin{propd}\label{dBInf}
Let $G$ be a finite group and $N\trianglelefteqslant G$. Then there is an injective group homomorphism
\[\dBInf_{G/N}^G:B(G/N,G/N)^\times\to B(G,G)^\times\]
defined by
\[u\mapsto 1_G+\Inf_{G/N}^G\circ(u-1_{G/N})\circ\Def_{G/N}^G\]
for all $u\in B(G/N,G/N)^\times$. Moreover, we have
\begin{enumerate}[label=\roman*)]
\item $\dBInf_{G/1}^G$ is the identity map if we identify $G/1$ with $G$, and
\item if $M$ is a normal subgroup of $G$ containing $N$, then
\[\dBInf_{G/N}^G\circ \dBInf_{G/M}^{G/N}=\dBInf_{G/M}^G.\]
Note we are identifying $G/M$ canonically with the quotient $(G/N)/(M/N)$.
\end{enumerate}
\end{propd}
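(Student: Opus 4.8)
The plan is to obtain $\dBInf^G_{G/N}$ as the image, under the unit-group functor $(-)^\times$ of Lemma~\ref{rngFunctor}, of the injective rng morphism of Lemma~\ref{DBRrng}; once this identification is made, both claims become formal consequences of functoriality together with the compatibility relations for $\Inf$ and $\Def$ recorded in Proposition~\ref{elementaryBisetProp}. Concretely, write $\theta^G_{G/N}:B(G/N,G/N)\to B(G,G)$, $a\mapsto \Inf^G_{G/N}\circ a\circ \Def^G_{G/N}$, for the rng morphism of Lemma~\ref{DBRrng}. This is a morphism in $\mathbf{Rng}_1$ in the sense needed by Lemma~\ref{rngFunctor} — note it is \emph{not} unital in the naive sense, but $\mathbf{Rng}_1$ is a subcategory of $\mathbf{Rng}$ and the functor $(-)^\times$ is defined on all such morphisms via $u\mapsto 1_B+f(u-1_A)$ — so $(\theta^G_{G/N})^\times$ is a group homomorphism $B(G/N,G/N)^\times\to B(G,G)^\times$, and unwinding the definition gives exactly
\[
(\theta^G_{G/N})^\times(u)=1_G+\Inf^G_{G/N}\circ(u-1_{G/N})\circ\Def^G_{G/N}=\dBInf^G_{G/N}(u).
\]
Injectivity is immediate because $\theta^G_{G/N}$ is a monomorphism (Lemma~\ref{DBRrng}) and $(-)^\times$ sends monomorphisms to monomorphisms (Lemma~\ref{rngFunctor}).

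For part (i), when $N=1$ we identify $G/1$ with $G$, and then $\Inf^G_{G/1}=\Def^G_{G/1}=\Id_G$ by Proposition~\ref{elementaryBisetProp}(i), so $\theta^G_{G/1}=\mathrm{id}_{B(G,G)}$ and hence $(\theta^G_{G/1})^\times=\mathrm{id}$, i.e.\ $\dBInf^G_{G/1}$ is the identity map.

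For part (ii), with $N\trianglelefteqslant M\trianglelefteqslant G$, the key point is that $\theta^G_{G/N}\circ\theta^{G/N}_{G/M}=\theta^G_{G/M}$ as rng morphisms $B(G/M,G/M)\to B(G,G)$; this follows from associativity of $\circ$ and the transitivity of inflation and of deflation, namely $\Inf^G_{G/N}\circ\Inf^{G/N}_{G/M}=\Inf^G_{G/M}$ and $\Def^{G/N}_{G/M}\circ\Def^G_{G/N}=\Def^G_{G/M}$ from Proposition~\ref{elementaryBisetProp}(iii) (using the canonical identification $G/M\cong (G/N)/(M/N)$). Applying the functor $(-)^\times$ and using that it respects composition (Lemma~\ref{rngFunctor}) yields
\[
\dBInf^G_{G/N}\circ\dBInf^{G/N}_{G/M}=(\theta^G_{G/N})^\times\circ(\theta^{G/N}_{G/M})^\times=(\theta^G_{G/N}\circ\theta^{G/N}_{G/M})^\times=(\theta^G_{G/M})^\times=\dBInf^G_{G/M}.
\]

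I do not expect any genuine obstacle here: the only point requiring minor care is checking that Lemma~\ref{rngFunctor} genuinely applies to the non-unital morphism $\theta^G_{G/N}$ — that is, that ``morphism in $\mathbf{Rng}_1$'' is being read as ``rng morphism between rings,'' not ``unital morphism'' — and that the formula $u\mapsto 1+f(u-1)$ is exactly what produces $\dBInf$. Everything else is a direct transcription of the transitivity relations for elementary bisets through a functor, so the write-up should be short.
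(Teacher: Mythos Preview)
Your proposal is correct and follows exactly the approach the paper takes: it obtains $\dBInf^G_{G/N}$ by applying the functor $(-)^\times$ of Lemma~\ref{rngFunctor} to the injective rng morphism of Lemma~\ref{DBRrng}, and derives (i) and (ii) from the corresponding parts of Proposition~\ref{elementaryBisetProp}. Your write-up is simply a more detailed unpacking of the paper's two-sentence proof, and your remark that $\mathbf{Rng}_1$ is the \emph{full} subcategory of $\mathbf{Rng}$ (so its morphisms need not be unital) is precisely the point that makes the functor applicable here.
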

\begin{proof}
The existence of $\dBInf_{G/N}^G$ follows form Lemmas~\ref{DBRrng} and ~\ref{rngFunctor}. The last two properties follow from Proposition~\ref{elementaryBisetProp}.
\end{proof}

The next proposition says that if $N\trianglelefteqslant G$, then the image of the embedding $B(G/N,G/N)\hookrightarrow B(G,G)$ from Lemma~\ref{DBRrng}, can be seen
as the span of basis elements of $B(G,G)$, $[G\times G/L]$ with $L\leqslant G\times G$, which have $N\times N\leqslant L$.

\begin{lemma}\label{kernelArgument}
Let $G$ be a finite group and $N$ a normal subgroup of $G$. Suppose $L\leqslant G\times G$ is the subgroup encoded be Goursat's Lemma as $(P_1,K_1,\varphi, P_2, K_2)$.
If $N\leqslant K_1$ and $N\leqslant K_2$. Define $L'$ to be the subgroup of $G/N\times G/N$ encoded by Goursat's Lemma as
$(P_1',K_1',\overline{\varphi},P_2', K_2')$, where $P_1',P_2', K_1',K_2'$ and $\overline{\varphi}$ are defined respectively by $P_1,P_2, K_1,K_2$ and $\varphi$, through the natural
surjection $G\to G/N$. Then
\[G\times G/L\cong \Inf_{G/N}^G\times_{G/N} (G/N\times G/N)/L'\times_{G/N} \Def_{G/N}^G\]
as $(G,G)$-bisets, via the mapping
\[(g_1,g_2)L\mapsto (N\,,_{G/N}\, (g_1N,g_2N)L'\,,_{G/N}\, N).\]
 
\end{lemma}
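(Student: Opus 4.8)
The plan is to verify directly that the stated map is a well-defined isomorphism of $(G,G)$-bisets. The fact that does essentially all of the work is the membership criterion
\[(g_1N,\,g_2N)\in L' \iff (g_1,g_2)\in L,\qquad\text{for all } g_1,g_2\in G,\]
which I would establish first, and which is exactly where the hypotheses $N\leqslant K_1$ and $N\leqslant K_2$ enter. Since $N\leqslant K_i\leqslant P_i$, the natural surjection $q\colon G\to G/N$ satisfies $q(P_i)=q(P_iN)=P_i'$ and induces isomorphisms $P_i'/K_i'\cong P_i/K_i$ under which $\overline{\varphi}$ corresponds to $\varphi$; unwinding Goursat's description of membership in $L'=(P_1',K_1',\overline{\varphi},P_2',K_2')$ then yields precisely the displayed equivalence. (Equivalently, one checks directly that $L'=(q\times q)(L)$, the hypothesis guaranteeing that the Goursat data of the image group is the one written down in the statement.)

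With the criterion in hand, well-definedness of the map $\Phi\colon G\times G/L\to \Inf_{G/N}^G\times_{G/N}(G/N\times G/N)/L'\times_{G/N}\Def_{G/N}^G$ is immediate: if $(g_1',g_2')=(g_1,g_2)(l_1,l_2)$ with $(l_1,l_2)\in L$, then $(l_1N,l_2N)\in L'$, so $(g_1'N,g_2'N)L'=(g_1N,g_2N)L'$ and the two images coincide. Equivariance for the left and right $G$-actions is then a routine manipulation of the defining relations of the tensor product: the left $G$-action on $G\times G/L$ is absorbed into the $\Inf_{G/N}^G$-factor and the right $G$-action into the $\Def_{G/N}^G$-factor, using in each case that the relevant $G/N$-action on those factors is just free multiplication on the set $G/N$; this gives $\Phi(g\cdot x)=g\cdot\Phi(x)$ and $\Phi(x\cdot g)=\Phi(x)\cdot g$.

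Finally I would prove $\Phi$ bijective by hand. For surjectivity, a general element $(xN,\,{}_{G/N}\,(g_1N,g_2N)L',\,{}_{G/N}\,yN)$ can be rewritten, by pushing $xN$ and $yN$ through the two tensor products via the relations, as $(N,\,{}_{G/N}\,(xg_1N,\,y^{-1}g_2N)L',\,{}_{G/N}\,N)=\Phi\big((xg_1,\,y^{-1}g_2)L\big)$; this computation also packages into an explicit two-sided inverse if one prefers. For injectivity, suppose $\Phi((g_1,g_2)L)=\Phi((g_1',g_2')L)$. Since the right $G/N$-action on $\Inf_{G/N}^G$ and the left $G/N$-action on $\Def_{G/N}^G$ are free, the pair of group elements realizing the tensor-product equivalence must both be trivial, which forces $(g_1'N,g_2'N)L'=(g_1N,g_2N)L'$; by the criterion this gives $(g_1',g_2')\in(g_1,g_2)L$, hence $(g_1,g_2)L=(g_1',g_2')L$. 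The only genuinely delicate point in all of this is keeping the tensor-product and biset conventions consistent — in particular that the right $G$-action on $G\times G/L$ is $(g_1,g_2)L\cdot g=(g_1,g^{-1}g_2)L$ — so that the absorption computations in the equivariance and surjectivity steps produce exactly the expression in the statement; everything else is bookkeeping. (An alternative that sidesteps the explicit map would be to compute $\Inf_{G/N}^G\circ[(G/N\times G/N)/L']\circ\Def_{G/N}^G$ in $B(G,G)$ using Proposition~\ref{MackeyFormula} and Lemma~\ref{explicitStarComputation}, arriving at $[G\times G/L]$ after a Butterfly-lemma computation; but since the statement asserts a specific biset isomorphism, the direct verification above is the natural route.)
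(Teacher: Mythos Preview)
Your proposal is correct and is exactly the approach the paper takes: the paper's proof is the single sentence ``This amounts to straightforward verification that the explicit map is an isomorphism of bisets,'' and you have carried out precisely that verification with the details filled in. The membership criterion you isolate is indeed the only place the hypotheses $N\leqslant K_1,\,N\leqslant K_2$ are used, and your handling of the biset/tensor conventions matches the paper's.
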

\begin{proof}
This amounts to straightforward verification that the explicit map is an isomorphism of bisets.
\end{proof}

We immediately get the following corollary.
\begin{cor}
If $G$ is a finite group and $N$ is a nontrivial normal subgroup of $G$, then 
\[\mathrm{im}(\dBInf_{G/N}^G)\cap B^\Delta(G,G)=\{\Id_G\}.\]
\end{cor}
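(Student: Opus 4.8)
The plan is to strip off the identity element and reduce the assertion to linear independence of the standard basis of $B(G,G)$. First I would record the two easy facts that pin down the intersection: $\Id_G=[G\times G/\Delta(G)]$ is bifree, so $\Id_G\in B^\Delta(G,G)$, and $\Id_G=\dBInf_{G/N}^G(1_{G/N})$, so $\Id_G\in\mathrm{im}(\dBInf_{G/N}^G)$; hence $\Id_G$ lies in the intersection and it remains to prove the reverse containment. Since $B^\Delta(G,G)$ is an additive subgroup of $B(G,G)$, it is enough to show that if $x\in\mathrm{im}(\dBInf_{G/N}^G)$ and $x\in B^\Delta(G,G)$, then $x-\Id_G=0$.

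Next I would describe $x-\Id_G$ in the standard basis from two sides. On one side, by definition $x=\Id_G+\Inf_{G/N}^G\circ a\circ\Def_{G/N}^G$ with $a=u-1_{G/N}\in B(G/N,G/N)$ for some $u\in B(G/N,G/N)^\times$, so $x-\Id_G$ lies in the image of the rng morphism of Lemma~\ref{DBRrng}; expanding $a$ in the standard basis of $B(G/N,G/N)$ and applying Lemma~\ref{kernelArgument} to each basis element shows that this image equals the $\bZ$-span of exactly those standard basis elements $[G\times G/L]$ of $B(G,G)$ with $N\times N\leqslant L$, i.e.\ with $N\leqslant K_1(L)$ and $N\leqslant K_2(L)$. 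On the other side, since $x\in B^\Delta(G,G)$ and $\Id_G\in B^\Delta(G,G)$, the difference $x-\Id_G$ is a $\bZ$-linear combination of basis elements $[G\times G/M]$ with $K_1(M)=K_2(M)=1$.

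Finally, because $N$ is nontrivial, no subgroup $L\leqslant G\times G$ can satisfy both $N\leqslant K_1(L)$ and $K_1(L)=1$, so the two families of basis elements identified above are disjoint subsets of the $\bZ$-basis $\{[G\times G/L]\}$ of $B(G,G)$. An element lying simultaneously in the spans of two disjoint subfamilies of a basis must be zero, so $x-\Id_G=0$, which finishes the argument. I expect no serious obstacle here; the only point requiring care is the bookkeeping in the reduction—that $\Id_G$ belongs to both sets, that $B^\Delta(G,G)$ is closed under subtraction, and that subtracting $\Id_G$ carries an element of $\mathrm{im}(\dBInf_{G/N}^G)$ into the image of the rng embedding of Lemma~\ref{DBRrng}—after which Lemma~\ref{kernelArgument} together with the basis property of $\{[G\times G/L]\}$ does all the work.
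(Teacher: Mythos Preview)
Your proof is correct and follows essentially the same approach the paper intends: the corollary is stated as an immediate consequence of Lemma~\ref{kernelArgument}, and your argument is precisely the natural unpacking of that---subtract $\Id_G$, use Lemma~\ref{kernelArgument} to place $x-\Id_G$ in the span of basis elements $[G\times G/L]$ with $N\times N\leqslant L$, and contrast with the bifree condition $K_1(L)=K_2(L)=1$ to force $x-\Id_G=0$. One small remark: Lemma~\ref{kernelArgument} is phrased starting from $L\leqslant G\times G$ rather than from $L'\leqslant G/N\times G/N$, so strictly speaking you are using the (obvious) inverse of the correspondence $L\mapsto L'$ when you expand $a$ in the basis of $B(G/N,G/N)$ and push forward; this is harmless but worth a word if you want to be fully precise.
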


\begin{nota}
If $f:G\to H$ is an isomorphism of groups, then the map
\[B(G,G)\to B(H,H)\]
\[a\mapsto \Iso(f)\circ a\circ\Iso(f^{-1})\]
is clearly an isomorphism of rings. Moreover, denote the restriction of this map to units by 
\[\dBIso(f):B(G,G)^\times\to B(H,H)^\times.\]
\end{nota}

\begin{prop}
Let $G$ be a finite group and $N$ a normal subgroup of $G$. Suppose $\varphi:G\to H$ is an isomorphism of groups, then
\[\dBIso(\varphi)\circ\dBInf_{G/N}^G=\dBInf_{H/\varphi(N)}^H\circ\dBIso(\varphi'),\]
where $\varphi':G/N\to H/\varphi(N)$ is the isomorphism induced by $\varphi$.
\end{prop}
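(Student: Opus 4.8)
The plan is to establish the identity first for the underlying (non-unital) rng maps and the ring isomorphisms induced by $\varphi$ and $\varphi'$, and then transport it onto unit groups by functoriality of $(-)^\times$ from Lemma~\ref{rngFunctor}. Write $\alpha\colon B(G/N,G/N)\to B(G,G)$, $a\mapsto\Inf_{G/N}^G\circ a\circ\Def_{G/N}^G$, and $\beta\colon B(H/\varphi(N),H/\varphi(N))\to B(H,H)$, $b\mapsto\Inf_{H/\varphi(N)}^H\circ b\circ\Def_{H/\varphi(N)}^H$, for the rng morphisms of Lemma~\ref{DBRrng}, and let $\Phi\colon B(G,G)\to B(H,H)$, $a\mapsto\Iso(\varphi)\circ a\circ\Iso(\varphi^{-1})$, and $\Phi'\colon B(G/N,G/N)\to B(H/\varphi(N),H/\varphi(N))$, $a\mapsto\Iso(\varphi')\circ a\circ\Iso((\varphi')^{-1})$, be the ring isomorphisms appearing in the Notation preceding the statement. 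All four are morphisms of $\mathrm{{\bf Rng}_1}$, and by construction $\alpha^\times=\dBInf_{G/N}^G$ and $\beta^\times=\dBInf_{H/\varphi(N)}^H$; since $\Phi$ and $\Phi'$ are unital, $\Phi^\times=\dBIso(\varphi)$ and $(\Phi')^\times=\dBIso(\varphi')$ as well.

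The crux is the rng-level identity $\Phi\circ\alpha=\beta\circ\Phi'$. Evaluating the left side on $a\in B(G/N,G/N)$ gives $\Iso(\varphi)\circ\Inf_{G/N}^G\circ a\circ\Def_{G/N}^G\circ\Iso(\varphi^{-1})$. The second identity of Proposition~\ref{IsoInfDef} rewrites $\Iso(\varphi)\circ\Inf_{G/N}^G=\Inf_{H/\varphi(N)}^H\circ\Iso(\varphi')$. For the opposite end, take the first identity of Proposition~\ref{IsoInfDef}, compose it on the left with $\Iso((\varphi')^{-1})$ and on the right with $\Iso(\varphi^{-1})$, and simplify using the standard relations $\Iso(f)\circ\Iso(g)=\Iso(f\circ g)$ and $\Iso(\Id)=\Id$; this yields $\Def_{G/N}^G\circ\Iso(\varphi^{-1})=\Iso((\varphi')^{-1})\circ\Def_{H/\varphi(N)}^H$. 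Substituting both rewrites turns $\Phi(\alpha(a))$ into $\Inf_{H/\varphi(N)}^H\circ\big(\Iso(\varphi')\circ a\circ\Iso((\varphi')^{-1})\big)\circ\Def_{H/\varphi(N)}^H$, which is exactly $\beta(\Phi'(a))$.

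Finally, apply the functor $(-)^\times$ to the equality $\Phi\circ\alpha=\beta\circ\Phi'$ of morphisms of $\mathrm{{\bf Rng}_1}$: functoriality (Lemma~\ref{rngFunctor}) gives $\Phi^\times\circ\alpha^\times=(\Phi\circ\alpha)^\times=(\beta\circ\Phi')^\times=\beta^\times\circ(\Phi')^\times$, which, after the identifications recorded above, is precisely $\dBIso(\varphi)\circ\dBInf_{G/N}^G=\dBInf_{H/\varphi(N)}^H\circ\dBIso(\varphi')$. The only step needing care is the manipulation of Proposition~\ref{IsoInfDef} that puts the ``$\varphi^{-1}$'' end of the composite in the form $\Iso((\varphi')^{-1})\circ\Def_{H/\varphi(N)}^H$, i.e.\ choosing the correct instance of that proposition and using the elementary composition rules for $\Iso$ bisets; the rest is formal. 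If one prefers to avoid the functor, the same computation can be done by hand: expand $\dBIso(\varphi)\big(\dBInf_{G/N}^G(u)\big)=\Iso(\varphi)\circ\big(1_G+\Inf_{G/N}^G\circ(u-1_{G/N})\circ\Def_{G/N}^G\big)\circ\Iso(\varphi^{-1})$, distribute $\Iso(\varphi)\circ(-)\circ\Iso(\varphi^{-1})$ over the sum, and use $\Iso(\varphi)\circ\Iso(\varphi^{-1})=\Id_H$ with the two rewrites above to land on $1_H+\Inf_{H/\varphi(N)}^H\circ\big(\dBIso(\varphi')(u)-1_{H/\varphi(N)}\big)\circ\Def_{H/\varphi(N)}^H=\dBInf_{H/\varphi(N)}^H\big(\dBIso(\varphi')(u)\big)$.
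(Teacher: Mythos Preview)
Your proof is correct and follows essentially the same approach as the paper: the paper's proof is the single line ``This follows from Proposition~\ref{IsoInfDef},'' and you have simply unpacked how it follows, using the two identities of that proposition together with the functoriality of $(-)^\times$ from Lemma~\ref{rngFunctor} (and you also sketch the direct hand computation at the end). There is nothing genuinely different here; you have filled in the details the paper leaves to the reader.
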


\begin{proof}
This follows from Proposition~\ref{IsoInfDef}.

\end{proof}

\section{Orthogonal units}

\begin{defn}
Let $G$ be a finite group. A unit $u\in B(G,G)^\times$ is called \emph{orthogonal} if we have
\[uu^\circ=u^\circ u=\Id_G.\]
The set of orthogonal units of $B(G,G)$ is denoted by $B_\circ(G,G)$.
\end{defn}

\begin{rmk}
Given a finite group $G$, the set $B_\circ(G,G)$ of orthogonal units is a subgroup of $B(G,G)^\times$. Indeed, if $u,v\in B_\circ(G,G)$, then by Proposition~\ref{dual}
we have
\[(uv)^\circ=v^\circ u^\circ,\]
thus
\[(uv)(v^\circ u^\circ)=(v^\circ u^\circ)(uv)=\Id_G.\]
So we call $B_\circ(G,G)$ the \emph{group orthogonal units} of $B(G,G)$.
\end{rmk}

\begin{prop}\label{orthogonalRestriction}
Let $G$ be finite group and $N\trianglelefteqslant G$. The map $\dBInf^G_{G/N}$ restricts to a group homomorphism
\[\dBInf^G_{G/N}:B_\circ(G/N,G/N)\to B_\circ(G,G)\]
\end{prop}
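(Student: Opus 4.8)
The claim is that $\dBInf^G_{G/N}$ sends orthogonal units to orthogonal units. By definition, for $u \in B_\circ(G/N,G/N)$ I must show that $v := \dBInf^G_{G/N}(u)$ satisfies $v v^\circ = v^\circ v = \Id_G$. Since we already know from Proposition~\ref{dBInf} that $\dBInf^G_{G/N}$ is a group homomorphism into $B(G,G)^\times$, it suffices to check the single identity $v^\circ = v^{-1}$, i.e.\ that $\dBInf^G_{G/N}$ commutes with the duality operator $(-)^\circ$ in the appropriate sense: $\bigl(\dBInf^G_{G/N}(u)\bigr)^\circ = \dBInf^G_{G/N}(u^\circ)$. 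Granting that, we get $v^\circ = \dBInf^G_{G/N}(u^\circ) = \dBInf^G_{G/N}(u^{-1}) = v^{-1}$ because $u$ is orthogonal and $\dBInf^G_{G/N}$ is a homomorphism, and we are done.

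**Key steps.** First I would write $v = \Id_G + \Inf^G_{G/N}\circ(u - \Id_{G/N})\circ\Def^G_{G/N}$, the defining formula from Proposition~\ref{dBInf}. Then I would apply $(-)^\circ$, using that $(-)^\circ$ is additive (Lemma~\ref{oppositeProp}(iv)), that $\Id_G^\circ = \Id_G$ (clear, or from Proposition~\ref{elementaryBisetProp}), that $(a \circ_G b \circ_{G/N} c)^\circ = c^\circ \circ_{G/N} b^\circ \circ_G a^\circ$ by the anti-multiplicativity in Lemma~\ref{oppositeProp}(iii)/Proposition~\ref{dual}, and crucially that $(\Inf^G_{G/N})^\circ = \Def^G_{G/N}$ and $(\Def^G_{G/N})^\circ = \Inf^G_{G/N}$ from Proposition~\ref{elementaryBisetProp}(ii). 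This yields
\[
v^\circ = \Id_G + \Inf^G_{G/N}\circ(u - \Id_{G/N})^\circ\circ\Def^G_{G/N} = \Id_G + \Inf^G_{G/N}\circ(u^\circ - \Id_{G/N})\circ\Def^G_{G/N} = \dBInf^G_{G/N}(u^\circ),
\]
where I used $\Id_{G/N}^\circ = \Id_{G/N}$ again. Finally, since $u \in B_\circ(G/N,G/N)$ means $u^\circ = u^{-1}$, and $\dBInf^G_{G/N}$ is a group homomorphism by Proposition~\ref{dBInf}, we conclude $v^\circ = \dBInf^G_{G/N}(u^{-1}) = v^{-1}$, so $v v^\circ = v^\circ v = \Id_G$, i.e.\ $v \in B_\circ(G,G)$.

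**Expected main obstacle.** There is no deep obstacle here — the whole argument is bookkeeping with the duality operator — but the one point requiring a little care is making sure the "mixed" anti-multiplicativity $(\Inf^G_{G/N}\circ a\circ\Def^G_{G/N})^\circ = \Inf^G_{G/N}\circ a^\circ\circ\Def^G_{G/N}$ is legitimate even though the middle factor $a$ lives in $B(G/N,G/N)$ rather than $B(G,G)$; this follows by applying Lemma~\ref{oppositeProp}(iii) twice (once to factor off $\Def^G_{G/N}$, once to factor off $\Inf^G_{G/N}$), together with part (ii) of Proposition~\ref{elementaryBisetProp} to flip inflation and deflation. One should also note that the identity element appearing inside the $\dBInf$ formula at the $G/N$ level is $\Id_{G/N} = 1_{B(G/N,G/N)}$ and is fixed by $(-)^\circ$, which is immediate. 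Everything else is formal.
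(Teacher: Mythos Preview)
Your proposal is correct and follows essentially the same route as the paper: both compute $(\dBInf^G_{G/N}(u))^\circ = \dBInf^G_{G/N}(u^\circ)$ by applying additivity and anti-multiplicativity of $(-)^\circ$ together with $(\Inf^G_{G/N})^\circ = \Def^G_{G/N}$, and then conclude via the homomorphism property of $\dBInf^G_{G/N}$. The only cosmetic difference is that the paper rewrites $\Inf^G_{G/N}\circ\Id_{G/N}\circ\Def^G_{G/N}$ as $j_N^G$ before taking duals, whereas you leave the expression $(u-\Id_{G/N})$ intact throughout.
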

\begin{proof} We check that the image of the proposed restriction lands in $B_\circ(G,G)$. 
Let $u\in B_\circ(G/N,G/N)$. Then
\[(\dBInf^G_{G/N}(u))^\circ=[\Id_G+\Inf^G_{G/N}\circ(u-\Id_{G/N})\circ\Def_{G/N}^G]^\circ\]
\[=(\Id_G+\Inf^G_{G/N}\circ u\circ\Def_{G/N}^G-j_N^G)^\circ=\Id_G+\Inf^G_{G/N}\circ u^\circ\circ\Def_{G/N}^G-j_N^G\]
\[=\dBInf^G_{G/N}(u^\circ)=(\dBInf^G_{G/N}(u))^{-1},\]
since $\dBInf^G_{G/N}(u)$ is a group homomorphism.

\end{proof}

From here on, we will assume the function $\dBInf_{G/N}^G$ is the one from Porposition~\ref{orthogonalRestriction}.

Elements from the subset $B^\Delta_{\circ}(G,G):=B_\circ(G,G)\cap B^\Delta(G,G)$ are called \emph{bifree orthogonal units}. Since $M\leqslant G\times G$
is bifree if an only if $M^\circ$ is bifree, it follows that $B^\Delta_{\circ}(G,G)$ is a subgroup of $B_\circ(G,G)$. Boltje and Perepelitsky studied and characterized
these groups for nilpotent $G$.

\begin{theorem}[\cite{boltje2015orthogonal}, 1.1(e)]\label{bifreeNilpotent}
Let $G$ be nilpotent group. Then
\[B^\Delta_{\circ}(G,G)\cong B^\times(G)\rtimes \mathrm{Out}(G)\]
with respect to the natural action of $\mathrm{Out}(G)$ on $B(G)^\times$.
\end{theorem}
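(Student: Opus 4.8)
The plan is to build the isomorphism from the two homomorphisms already at hand: the retraction $\rho\colon B^\Delta(G,G)\to\bZ\mathrm{Out}(G)$ together with its section $\eta$ (Proposition~\ref{outGIdeal}, Remark~\ref{bifreeBGG}), and the ring embedding $\iota\colon B(G)\hookrightarrow B^\Delta(G,G)$ of Proposition~\ref{burnsideRingEmbedding}. The target $B^\times(G)\rtimes\mathrm{Out}(G)$ should appear with $\iota(B^\times(G))$ as the normal factor and $\eta(\mathrm{Out}(G))$ as a complement.

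\textbf{The two visible subgroups.} Since $\Delta(L)^\circ=\Delta(L)$ for every $L\leqslant G$, the anti-involution $(-)^\circ$ fixes $\iota(B(G))$ pointwise; as $B^\times(G)$ is an elementary abelian $2$-group, each $u\in\iota(B^\times(G))$ satisfies $u^\circ=u=u^{-1}$, whence $uu^\circ=u^2=\Id_G$, so $\iota(B^\times(G))\leqslant B^\Delta_\circ(G,G)$. On the other side, $\eta$ is a ring homomorphism with $\eta(\overline\varphi)^\circ=\eta(\overline{\varphi}{}^{-1})$, so $\eta(\mathrm{Out}(G))\leqslant B^\Delta_\circ(G,G)$; and a direct application of Lemma~\ref{explicitStarComputation} to the $*$-product $(G,1,\varphi,G,1)*(L,1,\mathrm{id},L,1)*(G,1,\varphi^{-1},G,1)$ shows $\eta(\overline\varphi)\,\iota([G/L])\,\eta(\overline\varphi)^{-1}=\iota([G/\varphi(L)])$, so conjugation by $\eta(\mathrm{Out}(G))$ induces exactly the natural action of $\mathrm{Out}(G)$ on $B(G)$, inner automorphisms acting trivially.

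\textbf{The quotient.} Because $\Delta_\varphi(G,G)^\circ=\Delta_{\varphi^{-1}}(G,G)$ while $\rho$ annihilates every bifree basis element whose first projection is proper, one has $\rho(a^\circ)=\rho(a)^\circ$, so $\rho$ restricts to a homomorphism from $B^\Delta_\circ(G,G)$ to the group of orthogonal units of $\bZ\mathrm{Out}(G)$. That group is exactly $\pm\mathrm{Out}(G)$: if $u=\sum_\gamma a_\gamma\gamma$ is orthogonal, comparing coefficients of the identity in $uu^\circ$ gives $\sum_\gamma a_\gamma^2=1$. Since $\eta(\overline\varphi)$ and $-\Id_G$ are orthogonal units mapping onto $\overline\varphi$ and $-1$, $\rho$ sends $B^\Delta_\circ(G,G)$ onto $\pm\mathrm{Out}(G)$; composing with $\pm\mathrm{Out}(G)\twoheadrightarrow\mathrm{Out}(G)$ yields a surjection $\pi\colon B^\Delta_\circ(G,G)\twoheadrightarrow\mathrm{Out}(G)$ split by $\eta$, whose restriction to $\iota(B^\times(G))$ is trivial (the $\rho$-image of a Burnside unit is its mark at $G$, namely $\pm1$).

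\textbf{The kernel, and where nilpotency enters.} It remains to prove $\ker\pi=\iota(B^\times(G))$; granting this, $B^\Delta_\circ(G,G)=\iota(B^\times(G))\rtimes\eta(\mathrm{Out}(G))$ with the action computed above, which is the claim. The inclusion $\iota(B^\times(G))\subseteq\ker\pi$ was just noted. For the reverse, take $u\in B^\Delta_\circ(G,G)$ with $\rho(u)=\pm1$; after multiplying by $-\Id_G\in\iota(B^\times(G))$ we may assume $u=\Id_G+x$ with $x$ a $\bZ$-combination of bifree basis elements $[G\times G/\Delta_\varphi(A,B)]$, $|A|<|G|$. It suffices to show $u\in\iota(B(G))$: it is then automatically a \emph{unit} of $\iota(B(G))$, since $u^{-1}=u^\circ\in\iota(B(G))$ (as $\iota(B(G))$ is $(-)^\circ$-stable and $\iota$ is a ring embedding). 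To force $u$ into $\iota(B(G))$ — i.e.\ to kill all twisted ($\varphi$ non-inner) and all off-diagonal ($A\neq B$) contributions — I would induct on $|G|$, first reducing to $p$-groups via the compatibility of $\circ$, $(-)^\circ$, $\rho$ and $\iota$ with the factorization $G=\prod_pG_p$; then, for each proper $N\trianglelefteqslant G$, use the idempotents $j^G_N$ (Proposition~\ref{fidempotents}), Lemma~\ref{kernelArgument}, and the maps $\dBInf^G_{G/N}$ to split off the part of $x$ supported on bifree subgroups induced from $G/N$ and control it by the inductively known $B_\circ(G/N,G/N)$; finally exploit $uu^\circ=\Id_G$ and the linear independence of the surviving basis elements to conclude the genuinely twisted/off-diagonal terms vanish. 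Pinning down exactly which bifree subgroups of $G\times G$ can occur in an orthogonal unit, and why nilpotency (not an arbitrary $G$) is what makes this cancellation forced, is the main obstacle; a possibly cleaner alternative is to run the same elimination inside a mark/ghost-ring model of $B^\Delta(G,G)$ as in \cite{boltje2013ghost}, where orthogonality becomes a transparent sign condition.
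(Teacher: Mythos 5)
This statement is quoted in the paper from \cite{boltje2015orthogonal} (Theorem 1.1(e)) and is not proved there, so there is no internal proof to compare against; your proposal must therefore stand on its own, and it does not.

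The formal scaffolding you set up is correct and matches the known structure: $\iota(B(G)^\times)$ and $\eta(\mathrm{Out}(G))$ are subgroups of $B^\Delta_\circ(G,G)$, the conjugation action of $\eta(\overline\varphi)$ on $\iota([G/L])$ is the natural one, $\rho$ restricts to a surjection onto $\langle -\Id,\mathrm{Out}(G)\rangle$ split by $\eta$ (this is exactly Lemma~\ref{thing} of the paper), and the whole theorem reduces to the single claim $\ker\pi=\iota(B(G)^\times)$. But that claim is the entire mathematical content of the theorem, and you do not prove it; you explicitly flag it as ``the main obstacle.'' A plan that stops at the hard step is not a proof.

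Moreover, the strategy you sketch for the kernel cannot work as stated. You propose to induct on $|G|$ by using $j^G_N$, Lemma~\ref{kernelArgument} and the maps $\dBInf^G_{G/N}$ to ``split off the part of $x$ supported on bifree subgroups induced from $G/N$.'' But the image of $\dBInf^G_{G/N}$ is supported on subgroups $L\leqslant G\times G$ with $N\times N\leqslant L$, i.e.\ with $K_1(L),K_2(L)\supseteq N$, whereas every bifree basis element has $K_1=K_2=1$. Indeed the paper records precisely that $\mathrm{im}(\dBInf^G_{G/N})\cap B^\Delta(G,G)=\{\Id_G\}$ for $N\neq 1$: the inflation machinery is blind to the bifree part and cannot be used to peel off twisted or off-diagonal terms $[G\times G/\Delta_\varphi(A,B)]$ with $A$ proper. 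Eliminating those terms --- and identifying exactly where nilpotency is used --- requires a genuinely different argument (in \cite{boltje2015orthogonal} this is done by a coefficient analysis through mark-type homomorphisms attached to subquotients of $G$, together with induction over sections), none of which appears in your write-up. As it stands the proposal establishes only the easy inclusions and the semidirect-product formalism, not the theorem.
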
 

We will need the following result for a detail in Theorem~\ref{mainThm1}. Recall the definition of idempotents $f_N^G\in B(G,G)$, for $N\trianglelefteqslant G$ from Definition~\ref{fidempotentDefinition}.

\begin{lemma}\label{uniqunessLemma}
Let $G$ be a finite group. Let $\mathcal{N}$ and $\mathcal{M}$ be two sets of normal subgroups of $G$. Then
\[\sum_{N\in \mathcal{N}}f_N^G=\sum_{M\in \mathcal{M}}f_M^G.\]
if and only if $\mathcal{N}=\mathcal{M}$.

\end{lemma}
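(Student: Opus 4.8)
The plan is to exploit the linear independence of the family $\{f_N^G\}_{N\trianglelefteqslant G}$ inside $B(G,G)$, which is the crux of the matter. The ``if'' direction is trivial: if $\mathcal{N}=\mathcal{M}$ the two sums are literally the same. For the ``only if'' direction, I would argue that the $f_N^G$ are $\bZ$-linearly independent, and then the equality $\sum_{N\in\mathcal{N}}f_N^G=\sum_{M\in\mathcal{M}}f_M^G$ rearranges to $\sum_{N\in\mathcal{N}\setminus\mathcal{M}}f_N^G-\sum_{M\in\mathcal{M}\setminus\mathcal{N}}f_M^G=0$, a $\bZ$-linear relation among distinct $f$'s with coefficients $\pm1$; linear independence forces $\mathcal{N}\setminus\mathcal{M}=\mathcal{M}\setminus\mathcal{N}=\emptyset$, i.e. $\mathcal{N}=\mathcal{M}$.

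So everything reduces to proving that $\{f_N^G\}_{N\trianglelefteqslant G}$ is a linearly independent set. I would deduce this from the already-recorded linear independence of the idempotents $\{j_N^G\}_{N\trianglelefteqslant G}$ (noted in the excerpt right after the definition of $j_N^G$, via the observation that $j_N^G=[G\times G/L]$ with $L=(G,N;G,N)$ and these basis elements are distinct). By Definition~\ref{fidempotentDefinition} the $f_N^G$ are obtained from the $j_M^G$ by the M\"obius transform over the poset of normal subgroups of $G$, and by Proposition~\ref{fidempotents} the inverse transform expresses each $j_N^G$ as $\sum_{N\leqslant M\trianglelefteqslant G}f_M^G$. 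Hence the change-of-basis matrix between $(j_N^G)_N$ and $(f_N^G)_N$ (indexed by normal subgroups, upper-triangular with respect to inclusion and with $1$'s on the diagonal) is invertible over $\bZ$; since the $j_N^G$ are linearly independent, so are the $f_N^G$. Alternatively, and even more cleanly, one can use orthogonality of the idempotents directly: suppose $\sum_{N}a_N f_N^G=0$ with $a_N\in\bZ$; multiplying on the left by $f_M^G$ and using $f_M^G f_N^G=\delta_{M,N}f_M^G$ gives $a_M f_M^G=0$, and since $f_M^G\neq 0$ (e.g. because $j_M^G=\sum_{M\leqslant K}f_K^G$ would otherwise be a sum of fewer than the required idempotents, contradicting linear independence of the $j$'s, or simply because the $f_M^G$ are nonzero orthogonal idempotents) we get $a_M=0$ for all $M$.

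I expect the main (and only) obstacle to be pinning down that each $f_N^G\neq 0$; once that is in hand, the orthogonality argument of the previous paragraph closes the proof in two lines, and in fact it simultaneously handles the case of arbitrary integer (not just $\pm1$) coefficients, which is why I favor that route over the matrix argument. The nonvanishing follows formally: if some $f_N^G=0$, then $j_N^G=\sum_{N\leqslant M\trianglelefteqslant G}f_M^G=\sum_{N<M\trianglelefteqslant G}f_M^G$, and iterating, $j_N^G$ would lie in the span of $\{j_M^G : M>N\}$ (using $f_M^G=\sum_{M\leqslant K}\mu(M,K)j_K^G$ again, or simply $j_M^G=\sum_{M\leqslant K}f_K^G$ downward-recursively), contradicting the linear independence of $\{j_M^G\}_{M\trianglelefteqslant G}$. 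Assembling: (1) recall $\{j_N^G\}$ linearly independent; (2) deduce each $f_N^G\neq0$; (3) use orthogonality $f_M^Gf_N^G=\delta_{MN}f_M^G$ to get linear independence of $\{f_N^G\}$; (4) apply this to the difference of the two sums to conclude $\mathcal{N}=\mathcal{M}$.
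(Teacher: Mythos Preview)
Your proposal is correct and follows essentially the same approach as the paper: the ``if'' direction is trivial, and the ``only if'' direction is reduced to the linear independence of $\{f_N^G\}_{N\trianglelefteqslant G}$, which the paper asserts ``follows from their definition'' (i.e., your change-of-basis argument from the linearly independent $j_N^G$ via M\"obius inversion). You supply more detail than the paper does, including the alternative orthogonality route, but the underlying idea is the same.
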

\begin{proof}
The ``if" direction is trivial. The ``only if" direction follows from the fact that the set $\{f_N^G\}_{N\trianglelefteqslant G}$ is linearly independent in $B(G,G)$,
which follows from their definition.
\end{proof}

There is another naturally occurring subgroup of $B_\circ(G,G)$. Trivially, we know that $\Id_G=[G]=[G\times G/\Delta(G)]\in B(G,G)$ is in $B_\circ(G,G)$.
Moreover, we also have that $-\Id_G\in B_\circ(G,G)$. Thus, there is a subgroup obtained by inflating the negative identities, as we run over
all normal subgroups of $G$.

\begin{theorem}\label{mainThm1}
Let $G$ be a finite group. Set $n$ to be the number of normal subgroups $G$ has and
\[H_{dB}:=\langle \dBInf_{G/N}^G(-\Id_{G/N})\,|\, N\trianglelefteqslant G\rangle\]
Then $H_{dB}$ is an elementary abelian $2$-sugroup of $B_{\circ}(G,G)$, with order $2^n$. Moreover, we have
$B^\Delta_{\circ}(G,G)\cap H_{dB}=\{\pm \Id_G\}$.
\end{theorem}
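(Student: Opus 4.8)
## Proof proposal for Theorem~\ref{mainThm1}

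The plan is to analyze the generators $\dBInf_{G/N}^G(-\Id_{G/N})$ directly in terms of the idempotents $j_N^G$, and then pass to the orthogonal primitive idempotents $f_N^G$ to get a clean normal form. First I would compute, for a single normal subgroup $N \trianglelefteqslant G$, that
$$\dBInf_{G/N}^G(-\Id_{G/N}) = \Id_G + \Inf_{G/N}^G \circ (-\Id_{G/N} - \Id_{G/N}) \circ \Def_{G/N}^G = \Id_G - 2\,\Inf_{G/N}^G \circ \Id_{G/N} \circ \Def_{G/N}^G = \Id_G - 2 j_N^G,$$
using the definition of $\dBInf$ from Proposition~\ref{dBInf} and the notation $j_N^G = \Inf_{G/N}^G \circ \Def_{G/N}^G$. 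Now substitute the expansion $j_N^G = \sum_{N \leqslant M \trianglelefteqslant G} f_M^G$ from Proposition~\ref{fidempotents} and $\Id_G = \sum_{M \trianglelefteqslant G} f_M^G$; this yields
$$\dBInf_{G/N}^G(-\Id_{G/N}) = \sum_{M \not\geqslant N} f_M^G - \sum_{M \geqslant N} f_M^G.$$
So each generator is $\pm 1$ on each primitive idempotent component, i.e. it lies in the subgroup $\{\sum_{M \trianglelefteqslant G} \varepsilon_M f_M^G : \varepsilon_M \in \{\pm 1\}\}$, which is visibly elementary abelian of order $2^n$ and consists of orthogonal units (each such element is its own inverse, and is fixed by $(-)^\circ$ because the $f_M^G$ are fixed by $(-)^\circ$ — this last fact I would note follows since $j_N^G$ is self-opposite by Proposition~\ref{elementaryBisetProp}(ii) and the M\"obius-function definition of $f_M^G$ commutes with $(-)^\circ$). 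Hence $H_{dB}$ is contained in this big elementary abelian $2$-group.

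Next I would show $H_{dB}$ is in fact \emph{all} of that $2^n$-element group, by exhibiting the generators as an $\mathbb{F}_2$-basis after an affine shift. Writing each generator additively over $\mathbb{F}_2$ via the sign pattern $(\varepsilon_M)_{M}$, the generator attached to $N$ has $\varepsilon_M = -1$ exactly when $M \geqslant N$. So multiplying the generator for $N$ by $-\Id_G$ (which flips all signs) gives the element with $\varepsilon_M = -1$ exactly when $M \not\geqslant N$; equivalently, working in the $\mathbb{F}_2$-vector space with coordinates indexed by $M \trianglelefteqslant G$, the vectors $v_N := (\mathbf{1}_{M \geqslant N})_M$ together with the all-ones vector $v_G$ (the case $N = G$, giving $-\Id_G$) span the whole space: the matrix $(\mathbf{1}_{M \geqslant N})_{N,M}$ is the incidence matrix of the poset of normal subgroups, which is upper-unitriangular with respect to any linear extension of the poset order, hence invertible over $\mathbb{F}_2$. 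Therefore the $v_N$ already span, $H_{dB}$ has order exactly $2^n$, and it equals the full group of sign patterns. This step — recognizing the poset incidence matrix is unitriangular and invertible — is where Lemma~\ref{uniqunessLemma} (linear independence of the $f_N^G$) is really being used, and it is the main technical point, though it is standard M\"obius-inversion bookkeeping rather than anything deep.

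Finally, for the intersection claim $B^\Delta_\circ(G,G) \cap H_{dB} = \{\pm \Id_G\}$: an arbitrary element of $H_{dB}$ has the form $u = \sum_M \varepsilon_M f_M^G = \Id_G - 2\sum_{M \in \mathcal{M}} f_M^G$ where $\mathcal{M} = \{M : \varepsilon_M = -1\}$. Each $f_M^G$ is an $\mathbb{Z}$-combination of the $j_K^G$ with $K \geqslant M$, and $j_K^G = [G\times G/L]$ with $K_1(L) = K_2(L) = K$, which is bifree only when $K = 1$. So expand $u$ in the standard basis $\{[G\times G/L]\}$ of $B(G,G)$: the bifree part of $u$ comes entirely from the $j_1^G = \Id_G$ summand (all other $j_K^G$ appearing have nontrivial kernels), while the non-bifree part is $-2\sum_{M\in\mathcal{M}}(f_M^G$-contributions with $M \neq 1$ absorbed$)$. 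Concretely, $u$ is bifree iff the coefficient of every non-bifree basis element vanishes; since $\Id_G - 2\sum_{M\in\mathcal{M}} f_M^G$ must then reduce to $\pm\Id_G$, and one checks $u = \Id_G$ when $\mathcal{M} = \varnothing$ and $u = -\Id_G$ when $\mathcal{M}$ is the set of \emph{all} normal subgroups (the two cases where no proper-kernel basis elements survive), the linear independence from Lemma~\ref{uniqunessLemma} forces $\mathcal{M} \in \{\varnothing, \{N : N \trianglelefteqslant G\}\}$ for any bifree $u \in H_{dB}$. I expect the cleanest way to nail this last deduction is: apply the retraction $B(G,G) \to B(G/N, G/N)$, $x \mapsto \Def_{G/N}^G \circ x \circ \Inf_{G/N}^G$ from Lemma~\ref{DBRrng} (or the ring map $\rho$ of Proposition~\ref{outGIdeal}) to detect the $j_K^G$ with $K \neq 1$, so the real obstacle is just organizing which $f_M^G$ survive which projection — again routine once the $j_N^G = \Id_G - 2(\cdots)$ normal form is in hand.
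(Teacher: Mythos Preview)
Your proposal is correct and follows essentially the same route as the paper: both compute $\dBInf_{G/N}^G(-\Id_{G/N}) = \Id_G - 2j_N^G$, rewrite via the orthogonal idempotents $f_M^G$ to identify $H_{dB}$ with the sign-pattern group $\{\sum_M \varepsilon_M f_M^G : \varepsilon_M \in \{\pm1\}\}$ (the paper introduces this as $H' = \langle \Id_G - 2f_N^G \rangle$), and your unitriangular incidence-matrix argument is exactly the paper's descending induction on $N$. For the intersection claim your closing detour through retractions and $\rho$ is unnecessary --- the paper (and your own preceding sentence) finishes directly from the observation that among the $j_K^G$ only $j_1^G$ is bifree, so $\sum_{M\in\mathcal{M}} f_M^G \in B^\Delta(G,G)$ forces this idempotent to equal $0$ or $\Id_G$, and Lemma~\ref{uniqunessLemma} then pins down $\mathcal{M}$.
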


\begin{proof}
We prove this in a slightly indirect fashion. Notice that
\[(\Id_G-2f^G_N)^2=\Id_G-4f^G_N+4f^G_Nf^G_N=\Id_G-4f^G_N+4f^G_N=\Id_G,\]
since $f_N^G$ is an idempotent. Further, we have that
\[(\Id_G-2f^G_N)^\circ=\Id_G^\circ-2(f^G_N)^\circ=\Id_G-2f^G_N,\]
so $\Id_G-2f^G_N\in B_\circ(G,G)$. If we set
\[H'=\langle \Id_G-2f^G_N\,|\, N\trianglelefteqslant G\rangle\]
then we will proceed by proving that $H'$ has all the properties expected of $H_{dB}$ and see that $H_{dB}=H'$.

We first prove that $H'$ is an elementary abelian $2$-group. However, we have seen that every generator of $H'$ has order $2$, so
it suffices to see that it is abelian. If $M$ is a different normal subgroup of $G$, then
\[(\Id_G-2f^G_N)(\Id_G-2f^G_M)=\Id_G-2f_N^G-2f_M^G+4f^G_Nf^G_M\]
\[=\Id_G-2(f_N^G+f_M^G)=\Id_G-2f_N^G-2f_M^G+4f^G_Mf^G_N=(\Id_G-2f^G_M)(\Id_G-2f^G_N),\]
where the second and third equality come from the fact that $f^G_Nf^G_M=f^G_Mf^G_N=0$, which follows from Proposition~\ref{fidempotents} since $N\neq M$.
Moreover, together with Lemma~\ref{uniqunessLemma}, this calculation is easily extended to show every element of $H'$ can be written uniquely as
\[\Id_G-2\left(\sum_{N\in \mathcal{N}}f_N^G\right)\]
where $\mathcal{N}$ is any set of normal subgroups of $G$. Hence $|H'|=2^n$.

By the definition of $\dBInf_{G/N}^G$, we have
\[\dBInf_{G/N}^G(-\Id_{G/N})=\Id_{G}-\Inf_{G/N}^G\circ(\Id_{G/N}+\Id_{G/N})\circ\Def_{G/N}\]
\[=\Id_{G}-2\Inf_{G/N}^G\circ\Def_{G/N}=\Id_{G}-2j^G_N.\]
Thus by Proposition~\ref{fidempotents}, we have
\[\prod_{N\leqslant M\trianglelefteqslant G}(\Id_G-2f_{N}^G)=\Id_G-2\left(\sum_{N\leqslant M\trianglelefteqslant G}f_N^G\right)\]
\[\Id_{G}-2j^G_N=\dBInf_{G/N}^G(-\Id_{G/N}).\]
This proves that $H_{dB}\subseteq H'$. 
Moreover, this calculation shows that working inductively (by descending order, starting with $N=G$), we can replace the generators of $H'$ with the generators of $H_{dB}$,
thus $H_{dB}=H'$.

The last statement comes from noticing that $\displaystyle{\Id_G-2\left(\sum_{N\in \mathcal{N}}f_N^G\right)\in B^\Delta(G,G)}$ if and only if
$\sum_{N\in \mathcal{N}}f_N^G=0$ or $\Id_{G}$, since each $f^G_{N}=\sum_{N\leqslant M\trianglelefteqslant G}\mu_{\trianglelefteqslant G}(N,M)j^G_N$ and
$j_{N}^G\in B^\Delta(G,G)$ if and only if $G=1$. 
Thus $B^\Delta_{\circ}(G,G)\cap H_{dB}=\{\pm \Id_G\}$.
\end{proof}

\begin{cor}\label{proper}
Let $G$ be a finite group. Then $B^\Delta_\circ(G,G)=B_\circ(G,G)$ if and only if $G$ is trivial.
\end{cor}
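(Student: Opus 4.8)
The plan is to deduce the corollary directly from Theorem~\ref{mainThm1}, using the elementary abelian $2$-group $H_{dB}$ it produces. The key observation is that Theorem~\ref{mainThm1} gives $|H_{dB}| = 2^n$, where $n$ is the number of normal subgroups of $G$, while the same theorem asserts $B^\Delta_\circ(G,G) \cap H_{dB} = \{\pm \Id_G\}$, a group of order $2$ (or order $1$ when $\Id_G = -\Id_G$, which happens only in trivial characteristic-$2$ degenerate situations that do not occur here since $B(G,G)$ is a free $\bZ$-module).

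First I would handle the easy direction: if $G$ is trivial, then $B(G,G) \cong \bZ$ and $B^\Delta(G,G) = B(G,G)$, so trivially $B^\Delta_\circ(G,G) = B_\circ(G,G)$ (both equal $\{\pm 1\}$). For the converse, I argue by contraposition: suppose $G$ is nontrivial. Then $G$ has at least two normal subgroups ($\{1\}$ and $G$), so $n \geq 2$ and hence $|H_{dB}| = 2^n \geq 4$. Since $H_{dB} \subseteq B_\circ(G,G)$ but $H_{dB} \cap B^\Delta_\circ(G,G) = \{\pm\Id_G\}$ has order at most $2$, there exists an element of $H_{dB} \subseteq B_\circ(G,G)$ not lying in $B^\Delta(G,G)$; explicitly, $\dBInf_{G/N}^G(-\Id_{G/N}) = \Id_G - 2j_N^G$ for any nontrivial proper... — or more simply, for $N = G$ itself, $\dBInf_G^G(-\Id_1) = \Id_G - 2 j_G^G$, and $j_G^G = [G\times G/(G\times G)] \notin B^\Delta(G,G)$ since $G \times G$ is not bifree when $G$ is nontrivial. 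Either way, $B^\Delta_\circ(G,G) \subsetneq B_\circ(G,G)$.

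I expect essentially no obstacle here; the corollary is a near-immediate packaging of the cardinality count in Theorem~\ref{mainThm1}. The only point requiring a word of care is confirming that $B^\Delta(G,G)$ genuinely fails to contain these inflated negative identities, but this is already recorded inside the proof of Theorem~\ref{mainThm1} (the last paragraph, observing $j_N^G \in B^\Delta(G,G)$ iff $G = 1$) and also follows from Lemma~\ref{kernelArgument}'s corollary, which states $\mathrm{im}(\dBInf_{G/N}^G) \cap B^\Delta(G,G) = \{\Id_G\}$ for nontrivial $N$. Thus the write-up is just: the trivial case is direct; the nontrivial case produces an explicit orthogonal unit, e.g. $\Id_G - 2 j_{\{1\}}^G$ is excluded by triviality but $\Id_G - 2 j_G^G = -\Id_G + 2 f_G^G \cdots$ — I would simply cite that $H_{dB} \not\subseteq B^\Delta(G,G)$ whenever $n \geq 2$, which is exactly $B^\Delta_\circ(G,G) \cap H_{dB} = \{\pm\Id_G\} \subsetneq H_{dB}$.

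\begin{proof}
If $G$ is trivial, then $B(G,G)\cong\bZ$ and every subgroup of $G\times G$ is bifree, so $B^\Delta(G,G)=B(G,G)$ and hence $B^\Delta_\circ(G,G)=B_\circ(G,G)$.

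Conversely, suppose $G$ is nontrivial, and let $n$ be the number of normal subgroups of $G$. Since $\{1\}$ and $G$ are distinct normal subgroups, $n\geq 2$. By Theorem~\ref{mainThm1}, the group $H_{dB}$ is a subgroup of $B_\circ(G,G)$ of order $2^n\geq 4$, while $B^\Delta_\circ(G,G)\cap H_{dB}=\{\pm\Id_G\}$ has order at most $2$. Therefore $H_{dB}\not\subseteq B^\Delta(G,G)$, so there is an orthogonal unit of $B(G,G)$ not lying in $B^\Delta(G,G)$; explicitly, $\dBInf_G^G(-\Id_{\{1\}})=\Id_G-2j_G^G\in B_\circ(G,G)\setminus B^\Delta(G,G)$, since $j_G^G\in B^\Delta(G,G)$ would force $G=1$. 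Hence $B^\Delta_\circ(G,G)\subsetneq B_\circ(G,G)$.
\end{proof}
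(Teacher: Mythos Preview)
Your proof is correct and follows essentially the same argument as the paper's: the trivial case is immediate, and for nontrivial $G$ you invoke Theorem~\ref{mainThm1} to get $|H_{dB}|\geq 4$ while $H_{dB}\cap B^\Delta_\circ(G,G)=\{\pm\Id_G\}$ has order $2$. One small notational slip: the explicit element should be written $\dBInf_{G/G}^G(-\Id_{G/G})$ rather than $\dBInf_G^G(-\Id_{\{1\}})$, to match the paper's convention for the subscript.
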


\begin{proof}
In the case where $G$ is trivial, it is easy to see that $B(G,G)\cong B(G)$ as rings and that $B(G,G)^\times=B_\circ(G,G)=B^\Delta_\circ(G,G)=\{\pm\Id_G\}$.
Otherwise, Theorem~\ref{mainThm1} shows that $H_{dB}\subset B_{\circ}(G,G)$ 
has at least $4$ elements but $B^\Delta_{\circ}(G,G)\cap H_{dB}=\{\pm \Id_G\}$. The result follows.
\end{proof}

\begin{rmk}
The genesis of this paper began when the author's advisor, Robert Boltje, asked the author to investigate orthogonal units of the double Burnside ring that are not bifree. 
There is a connection to modular representation theory in considering what is called the \emph{trivial source ring}. If $F$ is an algebraically closed field of characteristic $p>0$ and
$G$ and $H$ are finite groups with blocks $A$ and $B$ from $FG$ and $FH$, repspectively, we denote by $T(A,B)$ to be the Grothendieck group, with respect to direct sum, of 
$(A,B)$-bimodules that are direct summands of finitely generated permutation $F(G\times H)$ modules. If $G=H$ and $A=B$, this is a ring with respect to the tensor product over $A$ (or over $FG$). 

Taking $F$-duals
gives rise to a bilinear group isomorphism from $T(A,A)$ to itself, $\gamma\mapsto \gamma^\circ$, with the property $(\gamma\beta)^\circ=\beta^\circ\gamma^\circ$.
In \cite{boltje2015orthogonal}, it is posed to consider the group of auto-equivalencies of the subgroup $T^\Delta(A,A)\subset T(A,A)$, with respect to taking duals, that
is elements $\gamma\in T^\Delta(A,A)$ such that $\gamma\gamma^\circ=\gamma^\circ\gamma=\Id$, where $T^\Delta(A,A)$ is the the subgroup of $T(A,A)$ spanned by
those standard basis elements of $T(A,A)$ which have vertex coming from a subgroup of the form $\Delta_\varphi(X,Y)$ of $G\times G$. This group is denoted by $T^\Delta_\circ(A,A)$.
However, it makes sense to also consider the group $T_\circ(A,A)$, i.e. all elements $\gamma\in T(A,A)$, such that $\gamma\gamma^\circ=\gamma^\circ\gamma=\Id$.

In the case that $G$ is a $p$-group, then $A=FG$ and there is a canonical, dual preserving, isomorphism $T(A,A)\cong B(G,G)$, that restricts to an isomorphism
$T^\Delta(A,A)\cong B^\Delta(G,G)$. In particular, Corollary~\ref{proper} can be used to show that in general $T^\Delta_\circ(A,A)$ is a proper subgroup
of $T_\circ(A,A)$.

More information on $T(A,B)$ and $T^\Delta_\circ(A,A)$ can be found in \cite{perepelitsky2014p}.
\end{rmk}

\begin{lemma}[\cite{boltje2015orthogonal}, 3.2(c)]\label{thing}
Let $G$ be a finite group. For each $\gamma\in B_\circ^\Delta(G,G)$, there is a unique $\overline{\varphi}\in \mathrm{Out}(G)$ and a unique $\epsilon\in \{\pm1\}$ such that 
$\rho(\gamma)=\epsilon\overline{\varphi}$. Moreover, the resulting map
\[B_\circ^\Delta(G,G)\to \mathrm{Out}(G)\]
\[\gamma\mapsto \overline{\varphi}\]
is a surjective group homomorphism.
\end{lemma}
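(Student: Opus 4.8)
The plan is to deduce everything from the retraction $\rho:B(G,G)\to\bZ\mathrm{Out}(G)$ of Proposition~\ref{outGIdeal} together with the fact that $\rho$ restricts to a \emph{ring} homomorphism $B^\Delta(G,G)\to\bZ\mathrm{Out}(G)$ (since $B^\Delta(G,G)$ is a subring and $\rho$ is multiplicative). First I would observe that for $\gamma\in B^\Delta_\circ(G,G)$ the element $\rho(\gamma)\in\bZ\mathrm{Out}(G)$ is a unit of the group ring whose inverse is $\rho(\gamma^\circ)$, because $\gamma\gamma^\circ=\gamma^\circ\gamma=\Id_G$ forces $\rho(\gamma)\rho(\gamma^\circ)=\rho(\gamma^\circ)\rho(\gamma)=\rho(\Id_G)=\overline{1}$, the identity of $\mathrm{Out}(G)$. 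So the real content is: what are the units $x$ of $\bZ\mathrm{Out}(G)$ that arise as $\rho(\gamma)$ for a \emph{bifree orthogonal} $\gamma$, and one shows each such $x$ has the form $\epsilon\overline{\varphi}$ with $\epsilon=\pm1$, $\overline{\varphi}\in\mathrm{Out}(G)$.

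The key step for the "$\pm$ monomial" shape is an augmentation/positivity argument. Since $\gamma$ is a unit of $B^\Delta(G,G)$, it is in particular a unit of $B(G,G)$, so the augmentation (sum of coefficients when $\gamma$ is written in the standard basis) must be $\pm1$; applying $\rho$ this says the augmentation of $\rho(\gamma)$ in $\bZ\mathrm{Out}(G)$ is $\pm1$. Combined with the fact that $\rho(\gamma)$ is a unit of $\bZ\mathrm{Out}(G)$ whose two-sided inverse is its image under the anti-involution $\overline{\varphi}\mapsto\overline{\varphi}^{-1}$ (the image of $(-)^\circ$ under $\rho$, by Lemma~\ref{oppositeProp} and the description of $\rho$), one gets that $\rho(\gamma)$ is an orthogonal unit of $\bZ\mathrm{Out}(G)$ with respect to that anti-involution; a standard argument — writing $x=\sum a_{\overline\psi}\overline\psi$, computing the coefficient of $\overline 1$ in $x x^\circ=\sum a_{\overline\psi}^2\,\overline 1+(\text{cross terms})$ and using $\sum a_{\overline\psi}^2=1$ with $a_{\overline\psi}\in\bZ$ — forces exactly one $a_{\overline\psi}=\pm1$ and the rest $0$. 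This gives the unique $\overline\varphi$ and unique $\epsilon$, proving the first sentence of the lemma. Uniqueness is immediate because $\{\overline\psi\}$ is a $\bZ$-basis of $\bZ\mathrm{Out}(G)$.

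Next I would check $\gamma\mapsto\overline\varphi$ is a group homomorphism. The assignment $\gamma\mapsto\rho(\gamma)$ is multiplicative (restriction of a ring homomorphism), so if $\rho(\gamma)=\epsilon\overline\varphi$ and $\rho(\gamma')=\epsilon'\overline{\varphi'}$ then $\rho(\gamma\gamma')=\epsilon\epsilon'\,\overline{\varphi}\,\overline{\varphi'}=\epsilon\epsilon'\,\overline{\varphi\circ\varphi'}$, and by the uniqueness just established the $\mathrm{Out}(G)$-part of $\rho(\gamma\gamma')$ is $\overline\varphi\,\overline{\varphi'}$; hence the map is a homomorphism (and the sign $\gamma\mapsto\epsilon$ is a separate homomorphism to $\{\pm1\}$, though the lemma doesn't ask for that). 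Surjectivity follows from the section $\eta:\bZ\mathrm{Out}(G)\to B(G,G)$ of Remark~\ref{bifreeBGG}: for $\overline\varphi\in\mathrm{Out}(G)$ pick a representative automorphism $\varphi$ and set $\gamma=[G\times G/\Delta(G,\varphi,G)]=\eta(\overline\varphi)$; this lies in $B^\Delta(G,G)$ since $\Delta(G,\varphi,G)$ is bifree, it satisfies $\gamma^\circ=[G\times G/\Delta(G,\varphi^{-1},G)]=\eta(\overline\varphi^{-1})$ by the description of $(-)^\circ$ on bifree subgroups, and $\gamma\gamma^\circ=\eta(\overline\varphi)\eta(\overline\varphi^{-1})=\eta(\overline 1)=\Id_G$ (similarly $\gamma^\circ\gamma=\Id_G$) using that $\eta$ is a ring homomorphism onto its image; so $\gamma\in B^\Delta_\circ(G,G)$ and $\rho(\gamma)=\overline\varphi$ maps to $\overline\varphi$.

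The main obstacle is the "$\pm$ monomial" step: one must be careful that the combinatorial argument "integer vector of squared-norm $1$" is applied in the right place. The clean route is to transport the orthogonality from $B^\Delta(G,G)$ to $\bZ\mathrm{Out}(G)$ via $\rho$ (using that $\rho$ intertwines $(-)^\circ$ with $\overline\psi\mapsto\overline\psi^{-1}$, which one verifies on basis elements from Lemma~\ref{oppositeProp}(i) and the formula for $\rho$), and then run the elementary argument entirely inside the group ring $\bZ\mathrm{Out}(G)$, where it is the standard classification of orthogonal units. Everything else is bookkeeping with the already-established multiplicativity of $\rho$ and $\eta$.
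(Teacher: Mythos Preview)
The paper does not prove this lemma; it is cited verbatim from \cite{boltje2015orthogonal}, 3.2(c), so there is no ``paper's own proof'' to compare against. Your proposed argument is essentially the standard one and is correct in its final form: transport orthogonality through $\rho$ to $\bZ\mathrm{Out}(G)$, then observe that an element $x=\sum_{\overline\psi}a_{\overline\psi}\overline\psi$ satisfying $xx^\circ=\overline 1$ (with $x^\circ=\sum a_{\overline\psi}\overline\psi^{-1}$) has $\sum a_{\overline\psi}^2=1$, forcing a single $\pm1$ coefficient. The homomorphism and surjectivity paragraphs are fine.

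However, your augmentation paragraph contains a genuine error you should drop entirely. The map ``sum of coefficients in the standard basis'' is \emph{not} a ring homomorphism on $B(G,G)$ or on $B^\Delta(G,G)$: by Proposition~\ref{MackeyFormula} the product $[G\times G/L]\circ_G[G\times G/M]$ has $|P_2(L)\backslash G/P_1(M)|$ terms, which is generally not $1$ (e.g.\ $[G\times G/(1\times 1)]^2=|G|\,[G\times G/(1\times 1)]$). So units of $B(G,G)$ need not have coefficient-sum $\pm1$, and in any case the coefficient-sum of $\gamma$ is not the augmentation of $\rho(\gamma)$, since $\rho$ kills all basis elements in $I_G$. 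Fortunately this detour is unnecessary: the squared-norm argument in $\bZ\mathrm{Out}(G)$ that you give afterwards is self-contained and does all the work. Simply delete the augmentation sentence and run the positivity argument directly, as you yourself indicate in your final paragraph.
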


\begin{rmk}\label{variation}
We make a slight variation on the above map to fit better with our purposes. If we identify $\mathrm{Out}(G)$ with its image in $(\bZ\mathrm{Out}(G))^\times$ and consider the the subgroup
$\langle-\Id_{\mathrm{Out}(G)},\mathrm{Out}(G)\rangle \leqslant (\bZ\mathrm{Out}(G))^\times$. The above lemma tells us that restricting the the map $\rho$ gives us a surjective
group homomorphism
\[\rho^\times: B_\circ^\Delta(G,G)\to\langle-\Id_{\mathrm{Out}(G)},\mathrm{Out}(G)\rangle.\]
\end{rmk}

The first part of next lemma shows that we can extend the map from Remark~\ref{variation} to all of $B_\circ(G,G)$. All parts are likely known by experts, except for the last part.

\begin{lemma}\label{workhorse}
Let $G$ be a finite group.

\begin{enumerate}[label=(\roman*)]
\item For each $\gamma\in B_\circ(G,G)$, there is a unique $\overline{\varphi}\in\mathrm{Out}(G)$ and a unique $\epsilon\in\{\pm1\}$ such that
$\rho(\gamma)=\epsilon\overline{\varphi}$
In particular, the map $\rho:B(G,G)\to \bZ\mathrm{Out}(G)$ restricts to a surjective group homomorphism
\[\rho^\times:B_\circ(G,G)\to \langle-\Id_{\mathrm{Out}(G)}, \mathrm{Out}(G)\rangle\]
\[u\mapsto \mathrm{sgn}(\epsilon)\overline{\varphi},\]
where we are identifying the group $\mathrm{Out}(G)$ with its image in $(\bZ\mathrm{Out}(G))^\times$.
\item The map $\eta:\bZ\mathrm{Out}(G)\to B(G,G)$ restricts to an injective group homomorphism
\[\eta:\langle -\Id_{\mathrm{Out}(G)},\mathrm{Out}(G)\rangle\to B_\circ(G,G),\]
such that $\rho^\times\circ\eta=\Id$. In particular, $B_\circ(G,G)=\mathrm{im}(\eta)\ltimes\ker(\rho^\times)$. 
\item There is a one-to-one correspondence between $u\in \ker(\rho^\times)$ and elements $a\in I_G$ such that
\[aa^\circ=a^\circ a=a+a^\circ.\]
\item If $N$ is a nontrivial normal subgroup of $G$, then $\mathrm{im}(\dBInf_{G/N}^G)\leqslant \ker(\rho^\times)$.
\end{enumerate}
\end{lemma}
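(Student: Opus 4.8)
The plan is to handle the four parts in order, after first recording the compatibility of $\rho$ with the duality $(-)^\circ$. Since $\rho$ is a retraction of the unital ring embedding $\eta$ of Remark~\ref{bifreeBGG} with kernel the ideal $I_G$, there is a $\bZ$-module decomposition $B(G,G)=I_G\oplus\eta(\bZ\mathrm{Out}(G))$. Both summands are stable under $(-)^\circ$: for a basis element $[G\times G/M]$ the opposite $[G\times G/M^\circ]$ has the two projection/kernel sections of $M$ interchanged, so the invariant $|P_1/K_1|=|P_2/K_2|$ is unchanged, while $\eta(\overline\varphi)^\circ=\eta(\overline\varphi^{-1})$ by the description of $\eta$, the identity $M^\circ=(C,D,\varphi^{-1},A,B)$, and Lemma~\ref{oppositeProp}. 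Writing $(-)^{*}$ for the standard $\bZ$-linear involution of $\bZ\mathrm{Out}(G)$ sending $\overline\varphi$ to $\overline\varphi^{-1}$, it then follows that $\rho(a^\circ)=\rho(a)^{*}$ for every $a\in B(G,G)$.

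For parts (i) and (ii): if $\gamma\in B_\circ(G,G)$, applying $\rho$ to $\gamma\gamma^\circ=\gamma^\circ\gamma=\Id_G$ gives $\rho(\gamma)\rho(\gamma)^{*}=\rho(\gamma)^{*}\rho(\gamma)=1$ in $\bZ\mathrm{Out}(G)$; reading off the coefficient of the identity in $\rho(\gamma)\rho(\gamma)^{*}$ shows the squares of the integer coefficients of $\rho(\gamma)$ sum to $1$, so $\rho(\gamma)=\epsilon\overline\varphi$ for unique $\epsilon\in\{\pm1\}$ and $\overline\varphi\in\mathrm{Out}(G)$, uniqueness coming from $\mathrm{Out}(G)$ being a $\bZ$-basis of $\bZ\mathrm{Out}(G)$. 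Hence $u\mapsto\mathrm{sgn}(\epsilon)\overline\varphi$ is well defined and is a group homomorphism into $\langle-\Id_{\mathrm{Out}(G)},\mathrm{Out}(G)\rangle$, since $\rho$ is multiplicative and $-\Id_{\mathrm{Out}(G)}$ is central there. For (ii), $\eta$ is a unital ring homomorphism, hence restricts to a homomorphism on unit groups; $\eta(-\Id_{\mathrm{Out}(G)})=-\Id_G$ and each $\eta(\overline\varphi)$ is orthogonal because $\eta(\overline\varphi)^\circ=\eta(\overline\varphi^{-1})=\eta(\overline\varphi)^{-1}$, so $\eta$ carries $\langle-\Id_{\mathrm{Out}(G)},\mathrm{Out}(G)\rangle$ injectively into $B_\circ(G,G)$, and $\rho\circ\eta=\Id$ yields $\rho^\times\circ\eta=\Id$. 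In particular $\rho^\times$ is surjective (completing (i)) and splits, so $B_\circ(G,G)=\mathrm{im}(\eta)\ltimes\ker(\rho^\times)$ by the recognition theorem for split extensions of groups.

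For parts (iii) and (iv): by the first paragraph, $u\in\ker(\rho^\times)$ exactly when $u\in B_\circ(G,G)$ and $u-\Id_G\in I_G$. Setting $a:=\Id_G-u\in I_G$, so that $u=\Id_G-a$ and $u^\circ=\Id_G-a^\circ$, the relations $uu^\circ=u^\circ u=\Id_G$ expand to $aa^\circ=a^\circ a=a+a^\circ$; conversely any $a\in I_G$ with this property gives $u=\Id_G-a$ with $u^\circ=u^{-1}$, hence $u\in B_\circ(G,G)$, and $\rho(u)=1$ since $a\in\ker\rho$. Thus $u\mapsto\Id_G-u$ is the asserted bijection, proving (iii). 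For (iv), let $N\trianglelefteqslant G$ be nontrivial. Every basis element $[G\times G/L]$ in the image of the injective rng morphism of Lemma~\ref{DBRrng} satisfies $N\leqslant K_1(L)$ and $N\leqslant K_2(L)$ (the span description recalled just before Lemma~\ref{kernelArgument}), so $|P_i(L)/K_i(L)|\leq|G|/|N|<|G|$ as $|N|\geq2$, whence $[G\times G/L]\in I_G$; therefore the whole image lies in $I_G$. Consequently, for $u\in B_\circ(G/N,G/N)$ we have $\dBInf_{G/N}^G(u)-\Id_G=\Inf_{G/N}^G\circ(u-\Id_{G/N})\circ\Def_{G/N}^G\in I_G$, and since $\dBInf_{G/N}^G(u)\in B_\circ(G,G)$ by Proposition~\ref{orthogonalRestriction}, it lies in $\ker(\rho^\times)$.

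The only step that is not formal bookkeeping is the middle of (i): recognizing $\rho(\gamma)$ as a $(-)^{*}$-unitary element of the integral group ring $\bZ\mathrm{Out}(G)$ and invoking the classical fact that such elements are $\pm$ group elements. Everything else is carried by the decomposition $B(G,G)=I_G\oplus\eta(\bZ\mathrm{Out}(G))$, its $(-)^\circ$-stability, and the structural results already established.
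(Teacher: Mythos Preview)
Your proof is correct. Parts (ii)--(iv) match the paper's argument essentially line for line (the paper's (iv) cites (iii) and Lemma~\ref{kernelArgument}, which is exactly what you unwind). The only substantive difference is in part (i). The paper decomposes $u=u_\Delta+u_I$ with $u_\Delta$ supported on the basis elements $[G\times G/\Delta_\varphi(G)]$, uses the ideal property of $I_G$ to deduce $u_\Delta u_\Delta^\circ=\Id_G$, and then invokes Lemma~\ref{thing} (quoted from \cite{boltje2015orthogonal}) for bifree orthogonal units. You instead observe directly that $\rho$ intertwines $(-)^\circ$ on $B(G,G)$ with the standard involution $(-)^*$ on $\bZ\mathrm{Out}(G)$, apply $\rho$ to the orthogonality relation, and read off $\rho(\gamma)=\pm\overline\varphi$ from the sum-of-squares identity for the coefficient of $1$ in $xx^*$. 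Your route is more self-contained: it avoids the detour through $B^\Delta_\circ(G,G)$ and the external citation, at the cost of isolating the $(-)^\circ$/$(-)^*$ compatibility as a separate preliminary. The paper's route, on the other hand, yields the slightly stronger byproduct that the ``top'' part $u_\Delta$ of any orthogonal unit is itself a bifree orthogonal unit, which your argument does not make explicit (though it follows easily from $\rho(u)=\pm\overline\varphi$ combined with $\eta$).
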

\begin{proof}
To prove $(i)$ let $u\in B_\circ(G,G)$ and write $u=u_\Delta+u_I$ where 
\[u_\Delta=\sum_{\overline{\varphi}\in \mathrm{Out}(G)}c_{\varphi}[G\times G/\Delta_\varphi(G)]\]
 with
$c_{\varphi}\in \bZ$ for all $\overline{\varphi}\in \mathrm{Out}(G)$, and $u_I\in I_G$. Then we have $u^\circ=u_\Delta^\circ+u_I^\circ$, with $u_I^\circ\in I_G$ and 
\[u_\Delta^\circ=\sum_{\overline{\varphi}\in \mathrm{Out}(G)}c_{\varphi}[G\times G/\Delta_{\varphi^{-1}}(G)].\]
We prove that $u_\Delta\in B_\circ^\Delta(G,G)$ and the result will follow
from Lemma~\ref{thing} since $\rho(u_I)=0$. We have that $u_\Delta\in B^\Delta(G,G)$ so it suffices to see that $u_\Delta u_\Delta^\circ=u_\Delta^\circ u_\Delta=\Id_G$. Indeed,
\[\Id_G=uu^\circ=(u_\Delta+u_I)(u_\Delta^\circ+u_I^\circ)=u_\Delta u_\Delta^\circ+u_\Delta u_I^\circ+u_Iu_\Delta ^\circ+u_Iu_I^\circ.\]
Yet, $u_\Delta u_I^\circ+u_Iu_\Delta ^\circ+u_Iu_I^\circ\in I_G$ and if we write $u_\Delta u_\Delta^\circ$ in terms of the standard basis elements of $B(G,G)$, none of the summands will be in $I_G$, hence
$u_\Delta u_I^\circ+u_Iu_\Delta ^\circ+u_Iu_I^\circ=0$ and so $u_\Delta u_\Delta^\circ=\Id_G$. Similarly, we get that $u_\Delta^\circ u_\Delta=\Id_G$.

Part $(ii)$ is clear from the definition of $\eta$.

Part $(iii)$ follows by writing $u=\Id_G-a$ and noticing that $u\in \ker(\rho^\times)$ if and only if $\rho(a)=0$ and 
\[\Id_G=uu^\circ=\Id_G-a-a^\circ+aa^\circ=u^\circ u=-a-a^\circ+a^\circ a,\]
if and only if $a\in I_G$ and 
\[aa^\circ=a^\circ a=a+a^\circ.\]

Part $(iv)$ follows from part $(iii)$ and Lemma~\ref{kernelArgument}.

\end{proof}

\begin{rmk}
There is another way to naturally produce units in $B_\circ(G,G)$, namely via the embedding $\iota:B(G)\to B(G,G)$ (see Proposition~\ref{burnsideRingEmbedding}). 
In fact, if we restrict this map to units we get a map
\[\iota:B(G)^\times\to B^\Delta_\circ(G,G).\]
Moreover, if we look at the subgroup of $B(G)^\times$ consisting of elements $x\in B(G)^\times$ such that $|x^G|=1$ (see Theorem~\ref{burnsideTheorem}),
this can be identified with $\overline{B}(G)^\times:=B(G)^\times/\{\pm 1\}$, and $\iota$ induces an injective group homomorphism
\[\iota':\overline{B}(G)^\times\to B^\Delta_\circ(G,G)\cap \ker(\rho^\times).\]
That this map is surjective for nilpotent groups follows from Lemma~\ref{workhorse} and Theorem~\ref{bifreeNilpotent}. However, it is shown in \cite{boltje2015orthogonal} ($4.1$, $4.3$)
that $\iota'$ is not surjective in general.

Furthermore, if $N$ is a nontrivial normal subgroup of $G$, we also have $\mathrm{im}(\iota')\cap\mathrm{im}(\dBInf_{G/N}^G)=\{\Id_G\}$. This follows
since $\mathrm{im}(\dBInf_{G/N}^G)\cap B^\Delta(G,G)=\Id_G$, which is a consequence of Lemma~\ref{kernelArgument}.

\end{rmk}

\section{Cyclic $p$-groups}

In this final section, we use the inflation maps between units of double Burnside rings to prove Theorem~\ref{main2}. If $G$ is a finite group and
$N$ is a normal subgroup of $G$, we will assume $\dBInf_{G/N}^G$ is the map from $B_\circ(G/N,G/N)$ to $B_\circ(G,G)$ established in Proposition~\ref{orthogonalRestriction}.

Since we are working with double Burnside rings of cyclic groups it is useful to consider double Burnside rings for general abelian groups. In particular, 
we study a useful isomorphism for calculation in the double Burnside ring in this case. Before we do so, suppose
$G$ is an abelian group and let $\sS_{G\times G}$ denote the set of subgroups of $G\times G$. Define the map
\[\gamma:\sS_{G\times G}\times \sS_{G\times G}\to \bZ\]
\[\gamma(L,M)=\frac{|G|}{|P_2(L)P_1(M)|}.\]
Notice that since $G$ is abelian, Proposition~\ref{MackeyFormula} tells us the product 
\[[G\times G/L]\circ_G[G\times G/M]=\sum_{h\in P_1(L)\backslash G/P_2(M)}[G\times G/(L*\,^{(h,1)}M)]\]
\[=\sum_{h\in G/(P_1(L)P_2(M))}[G\times G/(L*M)]=\gamma(L,M)[G\times G/(L*M)].\]
It follows from the associativity of $\circ_G$ that $\gamma$ satisfies the $2$-cocycle relation.

\begin{defn}
If $G$ is a finite abelian group and $\sS_{G\times G}$ is the set of subgroups of $G\times G$. We define $\bZ_\gamma\sS_{G\times G}$ to be the 
$\bZ$-algebra with basis given by the elements of $\sS_{G\times G}$ and multiplication defined by extending
\[L*'M:=\gamma(L,M)L*M,\]
for $L,M\in \sS_{G\times G}$, linearly to all elements of $\bZ_\gamma\sS_{G\times G}$.
\end{defn}

\begin{prop}\label{abelianIsomorphism}
Let $G$ be an abelian group. We have an isomorphism of algebras $B(G,G)\cong\bZ_\gamma\sS_{G\times G}$ given by the map
\[[G\times G/M]\mapsto M.\]
Moreover, the duality operator on $B(G,G)$ corresponds with the duality operator on $\bZ_\gamma\sS_{G\times G}$ induced by taking opposite bisets.
\end{prop}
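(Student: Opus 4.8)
The plan is to exhibit the $\bZ$-linear map $\Phi\colon B(G,G)\to\bZ_\gamma\sS_{G\times G}$ determined on basis elements by $[G\times G/M]\mapsto M$, and to verify exactly three things: that $\Phi$ is a $\bZ$-module isomorphism, that it is multiplicative, and that it carries the operator $(-)^\circ$ on $B(G,G)$ to the $\bZ$-linear map $M\mapsto M^\circ$. Everything else will then be automatic. The first point is immediate: since $G\times G$ is abelian, two of its subgroups are $(G\times G)$-conjugate exactly when they are equal, so $\sS_{G\times G}$ is a full set of representatives of conjugacy classes of subgroups and $\{[G\times G/M]\}_{M\in\sS_{G\times G}}$ is a $\bZ$-basis of $B(G,G)$; as $\sS_{G\times G}$ is by definition a $\bZ$-basis of $\bZ_\gamma\sS_{G\times G}$, the map $\Phi$ is a linear bijection between bases, hence an isomorphism of $\bZ$-modules.

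Next I would check multiplicativity, which by bilinearity need only be verified on basis elements. Fix $L,M\in\sS_{G\times G}$. Specializing Proposition~\ref{MackeyFormula} to $H=G$, and using that conjugation in the abelian group $G$ is trivial (so that ${}^{(h,1)}M=M$ and the summand is $[G\times G/(L*M)]$) together with the fact that every double coset in $P_2(L)\backslash G/P_1(M)$ is a coset of the subgroup $P_2(L)P_1(M)$, of which there are $|G|/|P_2(L)P_1(M)|=\gamma(L,M)$, one obtains the identity recorded in the text just above the definition,
\[[G\times G/L]\circ_G[G\times G/M]=\gamma(L,M)\,[G\times G/(L*M)].\]
Applying $\Phi$ to the right-hand side yields $\gamma(L,M)(L*M)$, which is precisely $L*'M$, the product of $\Phi([G\times G/L])=L$ and $\Phi([G\times G/M])=M$ in $\bZ_\gamma\sS_{G\times G}$. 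Hence $\Phi$ is multiplicative.

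Now $\Phi$ is a bijective additive and multiplicative map out of the associative unital ring $B(G,G)$, so its image $\Phi(\Id_G)$ is a two-sided identity of $\bZ_\gamma\sS_{G\times G}$; since $\Id_G=[G]=[G\times G/\Delta(G)]$ this says $\Delta(G)$ is the identity of $\bZ_\gamma\sS_{G\times G}$, which one may also see directly from $\gamma(\Delta(G),M)=1=\gamma(M,\Delta(G))$ and $\Delta(G)*M=M=M*\Delta(G)$. Thus $\Phi$ is an isomorphism of unital $\bZ$-algebras. For the final assertion, the duality operator on $\bZ_\gamma\sS_{G\times G}$ "induced by taking opposite bisets" is the $\bZ$-linear extension of $M\mapsto M^\circ$; by Lemma~\ref{oppositeProp}(i) the operator $(-)^\circ$ on $B(G,G)$ sends $[G\times G/M]$ to $[G\times G/M^\circ]$, so
\[\Phi\bigl([G\times G/M]^\circ\bigr)=M^\circ=\Phi\bigl([G\times G/M]\bigr)^\circ,\]
i.e. $\Phi$ intertwines the two operators. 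As a consistency check against Proposition~\ref{dual}, one has $(L*'M)^\circ=M^\circ*'L^\circ$: indeed $(L*M)^\circ=M^\circ*L^\circ$ by Lemma~\ref{oppositeProp}(ii), while $P_1(L^\circ)=P_2(L)$ and $P_2(L^\circ)=P_1(L)$ give $\gamma(M^\circ,L^\circ)=|G|/|P_2(M^\circ)P_1(L^\circ)|=|G|/|P_1(M)P_2(L)|=\gamma(L,M)$.

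There is no serious obstacle: the entire content is the Mackey computation producing the scalar $\gamma(L,M)$, and that computation has already been carried out in the discussion preceding the definition of $\bZ_\gamma\sS_{G\times G}$. The one place calling for a little care will be invoking the triviality of conjugation in $G$ (and in $G\times G$) correctly, so that it is genuinely $L*M$ with coefficient the plain index $\gamma(L,M)$ that appears, rather than a conjugate with a conjugation-dependent multiplicity; once that identity is in hand, matching it against the definition of $*'$ and reading off the duality statement from Lemma~\ref{oppositeProp} are both routine.
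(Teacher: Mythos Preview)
Your proposal is correct and follows essentially the same approach as the paper's proof: both use abelianness of $G\times G$ to get a bijection of bases, invoke Proposition~\ref{MackeyFormula} for multiplicativity, and appeal to Lemma~\ref{oppositeProp} for the duality statement. Your version simply spells out in full the details that the paper's three-sentence proof leaves to the reader.
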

\begin{proof}
Since $G$ is abelian, this is a one-to-one correspondence between bases. The verification that multiplication is preserved follows from Proposition~\ref{MackeyFormula}.
The last statement follows from Proposition~\ref{oppositeProp}.
\end{proof}

In the following proofs, we abusively identify $B(G,G)$ with $\bZ_\gamma\sS_{G\times G}$ , since $G$ will always be abelian. We also ignore the operator $*'$.

\begin{lemma}\label{trivialAction}
Suppose $G$ is finite a cyclic group. Then
\[B_\circ(G,G)=\mathrm{im}(\eta)\times\ker(\rho^\times)\]
where $\eta$ and $\rho^\times$ are the maps from Lemma~\ref{workhorse}.
\end{lemma}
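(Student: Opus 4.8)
The plan is to use the semidirect product decomposition from Lemma~\ref{workhorse}(ii), namely $B_\circ(G,G)=\mathrm{im}(\eta)\ltimes\ker(\rho^\times)$, and upgrade it to a direct product by showing that the conjugation action of $\mathrm{im}(\eta)$ on $\ker(\rho^\times)$ is trivial when $G$ is cyclic. Since $G$ is cyclic (hence abelian), $\mathrm{Out}(G)=\mathrm{Aut}(G)$ is abelian, and $\mathrm{im}(\eta)$ is the image of $\langle-\Id_{\mathrm{Out}(G)},\mathrm{Out}(G)\rangle$ under $\eta$, so in particular $\mathrm{im}(\eta)$ is already abelian. It therefore suffices to show that each generator $\eta(\overline{\varphi})=[G\times G/\Delta_\varphi(G)]$, for $\varphi\in\mathrm{Aut}(G)$, together with $-\Id_G$, centralizes every element of $\ker(\rho^\times)$.

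First I would reduce to understanding conjugation by $\eta(\overline{\varphi})$ alone, since $-\Id_G$ is central in all of $B(G,G)$. Writing a typical element of $\ker(\rho^\times)$ as $\Id_G - a$ with $a\in I_G$ and $\rho(a)=0$ (Lemma~\ref{workhorse}(iii)), conjugation gives $\eta(\overline{\varphi})(\Id_G-a)\eta(\overline{\varphi})^{-1}=\Id_G-\eta(\overline{\varphi})\,a\,\eta(\overline{\varphi^{-1}})$, so I must show $\eta(\overline{\varphi})\,a\,\eta(\overline{\varphi^{-1}})=a$ for all $a\in I_G$. By linearity it is enough to check this on the standard basis elements of $I_G$, i.e.\ on $[G\times G/M]=M$ for $M\leqslant G\times G$ with $|P_1(M)/K_1(M)|=|P_2(M)/K_2(M)|<|G|$. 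Using the $*$-computation of Lemma~\ref{explicitStarComputation} (in the abelian incarnation $\bZ_\gamma\sS_{G\times G}$ from Proposition~\ref{abelianIsomorphism}), one computes $\Delta_\varphi(G)*M*\Delta_{\varphi^{-1}}(G)$: since $P_1=P_2=G$ and $K_1=K_2=1$ for the $\Delta_\varphi(G)$ factors, the Butterfly Lemma sections are forced to be all of $G$ and trivial, and the net effect is to replace the quintuple $(P_1(M),K_1(M),\psi,P_2(M),K_2(M))$ by $(P_1(M),K_1(M),\bar\varphi\circ\psi\circ\bar\varphi^{-1},P_2(M),K_2(M))$ — but $G$ being cyclic means $\mathrm{Aut}(G)$ is abelian, so in fact the relevant restricted/induced automorphisms commute and the quintuple is unchanged. (Here I should be careful about the $\gamma$-coefficients: since $G$ is abelian, $P_1(\Delta_\varphi(G))P_2(M)=G$ and $P_1(M)P_2(\Delta_{\varphi^{-1}}(G))=G$, so $\gamma(\Delta_\varphi(G),M)=\gamma(M,\Delta_{\varphi^{-1}}(G))=1$, hence no stray scalars appear.) This shows $\eta(\overline{\varphi})$ centralizes each basis element of $I_G$, hence all of $\ker(\rho^\times)$.

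The main obstacle I anticipate is the bookkeeping in the $*$-product computation — specifically, verifying cleanly that conjugating the Goursat quintuple of $M$ by $\varphi$ only twists the connecting isomorphism by an inner-to-$\mathrm{Aut}(G)$ operation that is trivial because $\mathrm{Aut}(G)$ is abelian, and simultaneously checking that the abelianness of $G$ kills all the $\gamma$-cocycle factors so that no coefficient $\neq 1$ creeps in. Once the conjugation action is shown to be trivial, the semidirect product $\mathrm{im}(\eta)\ltimes\ker(\rho^\times)$ is a direct product $\mathrm{im}(\eta)\times\ker(\rho^\times)$, which is exactly the claim. Alternatively, if the direct computation proves unwieldy, one could instead argue that $\mathrm{im}(\eta)\cong\langle-\Id_{\mathrm{Out}(G)},\mathrm{Out}(G)\rangle$ is central in $B_\circ(G,G)$ by first checking it is central in the bifree part $B_\circ^\Delta(G,G)$ (where Theorem~\ref{bifreeNilpotent} applies, since cyclic groups are nilpotent, giving $B_\circ^\Delta(G,G)\cong B(G)^\times\rtimes\mathrm{Out}(G)$ with $\mathrm{Out}(G)$ abelian and acting on the abelian $B(G)^\times$) and then extending across $\ker(\rho^\times)$ via Lemma~\ref{workhorse}(iii); but I expect the direct $*$-product argument to be the cleanest route.
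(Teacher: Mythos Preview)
Your approach is essentially the paper's: both reduce to a $*$-product computation via Lemma~\ref{explicitStarComputation} (in the abelian model of Proposition~\ref{abelianIsomorphism}) showing that $\Delta_\varphi(G)$ commutes with an arbitrary basis element; the paper in fact proves the slightly stronger statement $\mathrm{im}(\eta)\subset Z(B(G,G))$ directly, rather than restricting to $I_G$ and checking trivial conjugation.

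One small imprecision worth tightening: your justification ``$\mathrm{Aut}(G)$ is abelian, so the relevant restricted/induced automorphisms commute'' is not quite the right reason. In the conjugated quintuple $(P_1,K_1,\overline{\varphi}\circ\psi\circ\overline{\varphi}'^{-1},P_2,K_2)$, the maps $\overline{\varphi}$ and $\overline{\varphi}'$ live in $\mathrm{Aut}(P_1/K_1)$ and $\mathrm{Aut}(P_2/K_2)$ respectively, which are automorphism groups of \emph{different} quotients, so abelianness of a single automorphism group does not by itself give $\overline{\varphi}\circ\psi=\psi\circ\overline{\varphi}'$. What is actually used (and what the paper spells out) is that if $\varphi(x)=x^k$ then both $\overline{\varphi}$ and $\overline{\varphi}'$ are the $k$-th-power maps on their respective quotients, and any group homomorphism $\psi$ satisfies $\psi(y^k)=\psi(y)^k$; hence $\psi\circ\overline{\varphi}'=\overline{\varphi}\circ\psi$ and the quintuple is unchanged.
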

\begin{proof}
This amounts to showing that conjugation by an element of $\mathrm{im}(\eta)$ is trivial on elements of $\ker(\rho^\times)$. We can actually say more. 
In fact, we show that $\mathrm{im}(\eta)\subset Z(B(G,G))$. Every element of $\mathrm{im}(\eta)$ is of the form $\epsilon\Delta_\varphi(G)$ where $\epsilon\in \{\pm 1\}$
and $\varphi$ is an automorphism of $G$. Write $G=\langle x\rangle$, then $\varphi(x)=x^k$ where $k$ relatively prime with the order of $G$. Suppose
$L=(P_1,K_1,\psi,P_2,K_2)\in \sS_{G\times G}$. By Proposition~\ref{abelianIsomorphism} and Lemma~\ref{explicitStarComputation} we have
\[\Delta_\varphi(G)*'L=(G,1,\varphi,G,1)*'(P_1,K_1,\psi,P_2,K_2)=(P_1,K_1,\overline{\varphi}\circ\psi,P_2,K_2)\]
where $\overline{\varphi}:P_1/K_1\to P_1/K_1$ is the map that takes $xK_1$ to $x^kK_1$. However, $\overline{\varphi}\circ\psi=\psi\circ\overline{\varphi}'$ where
$\overline{\varphi}':P_2/K_2\to P_2/K_2$ that takes $xK_2\mapsto x^kK_2$. Thus
\[\Delta_\varphi(G)*'L=(P_1,K_1,\overline{\varphi}\circ\psi,P_2,K_2)=(P_1,K_2,\psi\circ\overline{\varphi}',P_2,K_2)=L*'\Delta_\varphi(G),\]
where the last equality, again, comes from Lemma~\ref{explicitStarComputation}. The result follows.
\end{proof}

We now specialize to the case where $p$ is a prime and $G$ is a cyclic $p$-group.

Theorem~\ref{main2} will be a consequence of the next two propositions. It is proved by induction. The first proposition encompasses the base case, with the next proposition essentially
being the inductive step when $p$ is odd.  We refer the reader to Remark~\ref{encodeRmk} for a recap on the notation used for the following propositions.

\begin{prop}\label{baseCase}
Let $G=C_p$ where $p$ is a prime.
\begin{enumerate}[label=(\roman*)]
\item If $p=2$, then $B_\circ(G,G)\cong C_2\times D_8$.
\item If $p=3$, then $B_\circ(G,G)\cong C_{p-1}\times C_2^3$
\item If $p>3$, then $B_\circ(G,G)\cong C_{p-1}\times C_2^2$
\end{enumerate}
\end{prop}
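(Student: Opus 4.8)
The plan is to use the decomposition $B_\circ(G,G) = \mathrm{im}(\eta) \times \ker(\rho^\times)$ supplied by Lemma~\ref{trivialAction} (valid since $G = C_p$ is cyclic), and then to compute each factor separately. The factor $\mathrm{im}(\eta)$ is, by Lemma~\ref{workhorse}(ii), isomorphic to $\langle -\Id_{\mathrm{Out}(G)}, \mathrm{Out}(G)\rangle$; since $\mathrm{Out}(C_p) = \mathrm{Aut}(C_p) \cong C_{p-1}$, and $p-1$ is even for $p$ odd, the element $-\Id$ already lies inside $C_{p-1}$ when... no, wait — $-\Id_{\mathrm{Out}(G)}$ is the negative in $\bZ\mathrm{Out}(G)$, not an element of $\mathrm{Out}(G)$, so $\langle -\Id, \mathrm{Out}(G)\rangle \cong C_2 \times C_{p-1}$. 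For $p=2$ this collapses to $C_2$. So $\mathrm{im}(\eta) \cong C_2 \times C_{p-1}$ for $p$ odd (and $C_2$ for $p=2$, with the extra $C_2$ that appears in the $p=2$ count coming from $\ker(\rho^\times)$).

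The main work is therefore to determine $\ker(\rho^\times)$, i.e.\ the set of $u = \Id_G - a$ with $a \in I_G$ satisfying $aa^\circ = a^\circ a = a + a^\circ$ (Lemma~\ref{workhorse}(iii)). For $G = C_p$, the ideal $I_G$ is spanned by the basis subgroups $M \le C_p \times C_p$ with $|P_1(M)/K_1(M)| < p$, i.e.\ $|P_1(M)/K_1(M)| = 1$. Using the notation $(i,j;k,l)_\varphi$ from Remark~\ref{encodeRmk} with $0 \le i,j,k,l \le 1$ and $i - j = k - l$, the relevant subgroups with trivial quotient are $(0,0;0,0)$, $(1,1;0,0)$, $(0,0;1,1)$, and $(1,1;1,1)$; these correspond (via $j_N^G$-type elements and their "off-diagonal" analogues) to a small explicit list. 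I would write a general element $a = \alpha_1 (0,0;0,0) + \alpha_2 (1,1;0,0) + \alpha_3 (0,0;1,1) + \alpha_4 (1,1;1,1)$ with $\alpha_i \in \bZ$, compute $a^\circ$ (which swaps $(1,1;0,0) \leftrightarrow (0,0;1,1)$ and fixes the other two), and use the $*'$-multiplication formula from Proposition~\ref{abelianIsomorphism} together with Lemma~\ref{explicitStarComputation} to expand $aa^\circ$, $a^\circ a$, $a + a^\circ$ in the basis. Matching coefficients yields a finite system of Diophantine equations in $\alpha_1,\dots,\alpha_4$; solving it (the $\gamma$-factors $|G|/|P_2(L)P_1(M)| = p$ or $1$ enter here) should cut the solution set down to an explicit finite group.

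I expect the solution set of that Diophantine system to be the genuine obstacle: the equations are quadratic, and the factor-of-$p$ coefficients mean the analysis differs for $p=2$, $p=3$, and $p>3$ (for instance a term like $p\alpha^2 = 2\alpha$ forces $\alpha \in \{0,1\}$ only when $p=2$, behaves differently for $p=3$ where $3\alpha^2 = \alpha + \text{something}$ can pick up an extra solution, and is even more restrictive for $p > 3$) — this is exactly where the three cases and the mysterious extra $C_2$ at $p=3$ and the $D_8$ at $p=2$ must emerge. Once $\ker(\rho^\times)$ is pinned down as an explicit group (I anticipate $C_2^2$ for $p > 3$, $C_2^3$ for $p = 3$, and something of order $8$ isomorphic to $D_8$ — or combining with the $C_2$ from $\mathrm{im}(\eta)$ to give $C_2 \times D_8$ — for $p = 2$, the non-abelianness in the $p=2$ case coming from the interaction of these reflections with the order-$2$ automorphism group being trivial so that $\ker(\rho^\times)$ itself must already be nonabelian), the three isomorphism statements follow by taking the direct product with $\mathrm{im}(\eta) \cong C_{p-1} \times C_2$ (resp.\ $C_2$ when $p=2$), using Lemma~\ref{trivialAction} to guarantee the product is direct. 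Finally I would double-check the orders against Theorem~\ref{mainThm1}: $C_p$ has exactly two normal subgroups, so $H_{dB}$ has order $4$ and must sit inside $B_\circ(G,G)$, which is a useful sanity check on the count in each case.
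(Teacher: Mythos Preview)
Your approach is essentially identical to the paper's: decompose via Lemma~\ref{trivialAction}, identify $\mathrm{im}(\eta)\cong C_2\times C_{p-1}$, write $u=\Id_G-\alpha$ with $\alpha$ in the four-dimensional ideal $I_G$, compute the multiplication table of the four basis elements, and reduce $\alpha\alpha^\circ=\alpha^\circ\alpha=\alpha+\alpha^\circ$ to a system of quadratic Diophantine equations in the four coefficients whose analysis splits on $p=2$, $p=3$, $p>3$.

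One bookkeeping slip to flag: your anticipated sizes of $\ker(\rho^\times)$ are each too large by a factor of $C_2$. The actual computation (which the paper carries out explicitly) gives $\ker(\rho^\times)\cong C_2$ for $p>3$, $C_2^2$ for $p=3$, and $D_8$ for $p=2$; combined with $\mathrm{im}(\eta)\cong C_2\times C_{p-1}$ (respectively $C_2$ for $p=2$) this yields exactly the statement. With your anticipated $C_2^2$ and $C_2^3$ you would overshoot the target by one $C_2$ factor in cases (ii) and (iii). This is not an error in method --- the Diophantine system itself will correct you once you solve it --- but it is worth recalibrating your expectation: for $p>3$ the leading inequality $p a_1^2\le 2a_1$ forces $a_1=0$ immediately, collapsing the system and leaving only two solutions.
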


\begin{proof}

Referring to Lemma~\ref{workhorse}, we have that $\mathrm{im}(\eta)\cong C_{p-1}\times C_2$ and Lemma~\ref{trivialAction} shows that $\mathrm{im}(\eta)$ is in the center of $B(G,G)$. 
What is left is to determine $\ker(\rho^\times)$.

Suppose $u\in \ker(\rho^\times)$. By Lemma~\ref{workhorse} $(iii)$, we can write $u=\Id_G-\alpha$ where $\alpha\in I_G$ and $\alpha\alpha^\circ=\alpha^\circ \alpha=\alpha+\alpha^\circ$. 
Since $G$ has exactly two
subgroups, it follows by Goursat's Lemma that $I_G$ is spanned by exactly four elements, namely $w=(1,1,\Id,1,1), x=(1,1,\Id,G,G), y=(G,G,\Id,1,1),$ and $z=(G,G,\Id,G,G)$.

Notice that $w^\circ=w$ and $z^\circ=z$ and $x^\circ=y$ and $y^\circ=x$. Given integers $a_1,a_2,a_3,a_4\in \bZ$, we can write
\[\alpha=a_1w+a_2x+a_3y+a_4z\]
and
\[\alpha^\circ=a_1w+a_3x+a_2y+a_4z.\]
It is straightforward to verify, using Proposition~\ref{abelianIsomorphism} and Lemma~\ref{explicitStarComputation}, that 
\[ww=pw, wx=px, wy= w, wz=x,\]
\[xw=w, xx=x, xy= w, xz=x,\]
\[yw=py, yx=pz, yy= y, yz=z,\]
\[zw=y, zx=z, zy= y, zz=z.\]
If we write
\[\alpha\alpha^\circ=c_1w+c_2x+c_3y+c_4z,\]
with
\[c_1=pa_1^2+2a_1a_2+a_2^2=(p-1)a_1^2+(a_1+a_2)^2\]
\[c_2=pa_1a_3+a_1a_4+a_2a_3+a_2a_4\]
\[c_3=pa_1a_2+a_2a_3+a_1a_4+a_3a_4\]
\[c_4=pa_3^2+2a_3a_4+a_4^2=(p-1)a_3^2+(a_3+a_4)^2,\]
and note that $\alpha+\alpha^\circ=2a_1x+(a_2+a_3)y+(a_2+a_3)w+2a_4z$, we have

\[c_1=(p-1)a_1^2+(a_1+a_2)^2=2a_1\]
\[c_2=pa_1a_3+a_1a_4+a_2a_3+a_2a_4=a_2+a_3\]
\[c_3=pa_1a_2+a_2a_3+a_1a_4+a_3a_4=a_2+a_3\]
\[c_4=(p-1)a_3^2+(a_3+a_4)^2=2a_4.\]
Dually, we have
\[\alpha^\circ \alpha=d_1w+d_2x+d_3y+d_4z=\alpha+\alpha^\circ,\]
which implies
\[d_1=(p-1)a_1^2+(a_1+a_3)^2=2a_1\]
\[d_2=pa_1a_2+a_1a_4+a_2a_3+a_3a_4=a_2+a_3\]
\[d_3=pa_1a_3+a_2a_3+a_1a_4+a_2a_4=a_2+a_3\]
\[d_4=(p-1)a_2^2+(a_2+a_4)^2=2a_4.\]
Thus, our search boils down to finding quadruples $(a_1,a_2,a_3,a_4)$ of integers that satisfy the above quadratic equations.

Notice that $c_1, d_1\geq0$, this implies that $a_1\geq 0$. Moreover, from $c_1$ and $d_1$ we also have that 
\[(a_1+a_2)^2=(a_1+a_3)^2=(2-(p-1)a_1)a_1\geq 0.\]
If $a_1=0$, then $a_2=a_3=0$. Looking at $c_4$ and $d_4$, this leave $a_4=0$ or $a_4=2$. Note that both $(0,0,0,0)$ and $(0,0,0,2)$ satisfy our system of equations.

If $a_1\neq0$, then this implies that $2-(p-1)a_1\geq0\implies \frac{2}{p-1}\geq a_1$. Since $p$ is a prime, this forces $p=2$ or $p=3$. We note that this implies that if
that in the case $p>3$, $a_1=0$ and $\ker(\rho^\times)$ has
exactly $2$ elements, thus it is isomorphic to $C_2$ and this proves $(iii)$. We split the rest of the proof up into the two obvious cases.\\

{\bf Case $p=3$:}

We continue with the assumption that $a_1>0$. The inequality $\frac{2}{p-1}\geq a_1$ forces $a_1=1$. The coefficients $c_1$ and $d_1$ then imply that
\[(1+a_2)^2=(1+a_3)^2=0\implies a_2=a_3=-1\]
Looking at the coefficients $c_4$ and $d_4$, we can conclude that $a_4$ must satisfy the quadratic equation
\[2+(a_4-1)^2=2a_4.\]
Thus $a_4=1$ or $a_4=3$. Checking that the quadruples $(1,-1,-1,1)$ and $(1,-1,-1,-3)$ both satisfy the equations given by the coefficients $c_2$ and $c_3$ (note that $d_2$ and $d_3$ are the same).
We see that $|\ker(\rho^\times)|=4$. That it is isomorphic to $C_2^2$ comes form the fact that every element is self dual, thus has order $2$. This proves $(ii)$.\\

{\bf Case $p=2$:}
We again assume that $a_1>0$ and call upon the inequality $\frac{2}{p-1}\geq a_1$. There are two cases, either $a_1=1$ or $a_1=2$. If $a_1=2$,
then $c_1$ and $d_1$ imply that $(2+a_2)^2=(2+a_3)^2=0$, which forces $a_2=a_3=-2$. Using the coefficients $c_4$ and $d_4$, this means that $a_4$ must satisfy the quadratic
$4+(a_4-2)^2=2a_4$ and this implies $a_4=2$ or $a_4=4$. Note that both the quadruples $(2,-2,-2,2)$ and $(2,-2,-2,4)$ satisfy the equations given by the coefficients $c_2$ and $c_3$.

If $a_1=1$, then $(1+a_2)^2=(1+a_3)^2=1\implies a_2\in\{0,-2\}$ and $a_3\in\{0,-2\}$. If $a_2=a_3=0$ then any of the equations provided by $c_2,c_3,d_2,d_3$ imply that $a_4=0$. Clearly $(1,0,0,0)$
satisfies $c_4$ and $d_4$. 

If $a_2=-2$ (respectively $a_3=-2$), then $d_4$ (respectively $c_4$) implies $a_4=2$ or $a_4=4$. Which narrows the other quadruples down to $(1,-2,0,2), (1,-2,0,4),$ $(1,0,-2,2),$ 
$(1,0,-2,4), (1,-2,-2,2),$ and $(1,-2,-2,4)$. Notice that the only quadruples that satisfy the coefficients $c_2$ and $c_2$, are $(1,-2,0,2)$, $(1,0,-2,2)$ and $(1,-2,-2,4)$.
Thus, $\ker(\rho^\times)$ is a group of order $8$, parametrized by the quadruples
\[(0,0,0,0),(0,0,0,2),(2,-2,-2,2), (2,-2,-2,4),\]
\[(1,0,0,0),(1,-2,0,2),(1,0,-2,2),(1,-2,-2,4).\]
Notice that exactly $6$ elements are self dual. This implies that $\ker(\rho\times)$ has $5$ elements of order $2$ and $2$ elements of order $4$. 
Thus, $\ker(\rho^\times)\cong D_8$
proving $(i)$.
\end{proof}

We note that the above proposition gives an outlines for how to find orthogonal units of double Burnside rings for a general finite group $G$. 
Boiling the process down to solving a system of (several) quadratic equations.
However, as can already be seen, this process is rather tedious and not particularly insightful. 
For cyclic groups of order $p$, it showcases that $p=2$ and $p=3$ are exceptional cases. Yet, in the inductive step, we will see that $p=3$ falls in line with the rest of the odd cases. 
However, the case where $p=2$ remains exceptional. 
We leave the $p=2$ open to further research for now.\\

\begin{prop}\label{inductiveCase}
Let $p$ be an odd prime. Let $G=C_{p^n}$ with $n>1$. Then
\[B_\circ(G,G)=\mathrm{im}(\eta)\times \mathrm{im}(\dBInf_{C_{p^n}/C_p}^{C_{p^n}}).\]
\end{prop}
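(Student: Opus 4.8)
The plan is to collapse the statement, using the machinery already assembled, to one concrete claim about the support of an orthogonal unit, and then to settle that claim by a filtered computation in $\bZ_\gamma\sS_{G\times G}$.

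\textbf{Step 1 (reduction to $\ker(\rho^\times)=\mathrm{im}(\dBInf_{G/C_p}^G)$).} By Lemma~\ref{trivialAction}, $B_\circ(G,G)=\mathrm{im}(\eta)\times\ker(\rho^\times)$ with $\mathrm{im}(\eta)$ central, and by Lemma~\ref{workhorse}(iv) applied to $N=C_p\neq 1$ we have $\mathrm{im}(\dBInf_{G/C_p}^G)\leqslant\ker(\rho^\times)$. Since every element of $\mathrm{im}(\eta)$ has the form $\pm\Delta_\varphi(G)$ it lies in $B^\Delta(G,G)$, whereas $\mathrm{im}(\dBInf_{G/C_p}^G)\cap B^\Delta(G,G)=\{\Id_G\}$ by the corollary to Lemma~\ref{kernelArgument}; hence $\mathrm{im}(\eta)\cap\mathrm{im}(\dBInf_{G/C_p}^G)=\{\Id_G\}$. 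Thus, once $\ker(\rho^\times)=\mathrm{im}(\dBInf_{G/C_p}^G)$ is proved, the claimed internal direct product $B_\circ(G,G)=\mathrm{im}(\eta)\times\mathrm{im}(\dBInf_{G/C_p}^G)$ follows (recall $G/C_p\cong C_{p^{n-1}}$).

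\textbf{Step 2 (reduction to a support claim).} Let $\Phi\colon B(G/C_p,G/C_p)\hookrightarrow B(G,G)$, $\Phi(c)=\Inf_{G/C_p}^G\circ c\circ\Def_{G/C_p}^G$, be the injective rng morphism of Lemma~\ref{DBRrng}; it commutes with $(-)^\circ$, and by Lemma~\ref{kernelArgument} its image is the $\bZ$-span of the basis elements $[G\times G/L]$ with $C_p\leqslant K_1(L)$ and $C_p\leqslant K_2(L)$. Given $u\in\ker(\rho^\times)$, Lemma~\ref{workhorse}(iii) lets us write $u=\Id_G-a$ with $a\in I_G$ and $aa^\circ=a^\circ a=a+a^\circ$. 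The crux is then the claim ($\star$): $a$ is supported on subgroups $L$ with $C_p\leqslant K_1(L)$ and $C_p\leqslant K_2(L)$. Granting ($\star$), write $a=\Phi(b)$; applying $\Phi$ to $aa^\circ=a^\circ a=a+a^\circ$ and using injectivity yields $bb^\circ=b^\circ b=b+b^\circ$, so $v:=\Id_{G/C_p}-b$ satisfies $vv^\circ=v^\circ v=\Id_{G/C_p}$, i.e. $v\in B_\circ(G/C_p,G/C_p)$, and $\dBInf_{G/C_p}^G(v)=\Id_G+\Phi(v-\Id_{G/C_p})=\Id_G-a=u$. Combined with Step~1, this proves the proposition.

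\textbf{Step 3 (proof of ($\star$) --- the main obstacle).} I would filter $I_G$ by the two-sided, $(-)^\circ$-stable ideals $I_G^{[m]}:=\langle\,[G\times G/L]\;:\;|P_1(L)/K_1(L)|\leqslant p^m\,\rangle$ for $0\leqslant m\leqslant n-1$ (ideals by the discussion preceding Proposition~\ref{outGIdeal}), and write $a=\sum_{m}a^{(m)}$ for the corresponding exact-layer decomposition. Identifying $B(G,G)$ with $\bZ_\gamma\sS_{G\times G}$ (Proposition~\ref{abelianIsomorphism}) and computing $*'$ via Lemma~\ref{explicitStarComputation} among the at-most-four ``types'' $(i,j;k,l)_\varphi$ occurring in layer $m$, the decisive fact is that a layer-$m$ basis element with \emph{both} kernels nontrivial, multiplied by any layer-$m$ basis element having \emph{some} kernel trivial, lands in $I_G^{[m-1]}$ --- a genuine drop, available precisely because $n>1$. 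Reading $aa^\circ=a^\circ a=a+a^\circ$ modulo $I_G^{[m-1]}$ from the top layer downward, the layer-$m$ part yields a quadratic system in the coefficients of $a^{(m)}$ which is the $\mathrm{Aut}(C_{p^m})$-twisted analogue of the $C_p$-system solved in Proposition~\ref{baseCase}: for $p>3$ this twisted system already has only ``good'' solutions, while for $p=3$ the drop identities --- whose layer-$(m-1)$ output has nothing in $a$ below layer $m-1$ left to absorb it --- are needed to eliminate the extra solution and force the ``bad'' coefficients of $a^{(m)}$ to vanish. This is exactly where oddness of $p$ enters: it rules out the sporadic solutions that the $p=2$ and (at $n=1$) $p=3$ computations in Proposition~\ref{baseCase} exhibit. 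Once every $a^{(m)}$ is supported on ``good'' types, ($\star$) holds, and I expect this computation, together with bookkeeping of the various $\varphi\in\mathrm{Aut}(C_{p^m})$ in the cross-terms, to be the technically heaviest part of the argument.
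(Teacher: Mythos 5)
Your Steps 1 and 2 are correct and coincide with the paper's reduction: everything comes down to showing that any $u=\Id_G-a\in\ker(\rho^\times)$, with $a\in I_G$ satisfying $aa^\circ=a^\circ a=a+a^\circ$, is supported on subgroups $L$ with $C_p\leqslant K_1(L)$ and $C_p\leqslant K_2(L)$. The gap is in Step 3, which is a plan rather than a proof, and whose specific assertions do not hold up. First, layer $m$ of your filtration contains $(n-m+1)^2$ Goursat types $(a,a-m;c,c-m)_\varphi$, not ``at most four''; four occurs only at the top layer $m=n-1$. Second, your ``decisive fact'' is false as stated: for $X=(m+1,1;m+1,1)$ (both kernels nontrivial) and $Y=(m+1,1;m,0)$ (second kernel trivial), the Butterfly Lemma gives $X*Y=(m+1,1;m,0)$, which remains in layer $m$; a drop is guaranteed only when the trivial kernel of $Y$ is the one facing the nontrivial kernel of $X$. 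Third, the claimed $p=3$ versus $p>3$ dichotomy at each layer, and the need to descend to layer $m-1$ to eliminate extra solutions when $p=3$, is unsubstantiated and in fact does not occur.

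The mechanism that actually closes the argument, and which your sketch misses, is this: for $b-1\geqslant 1$, a product $X*Y=\Delta(C_{p^{b-1}})$ with $X,Y$ in the remaining support of $a$ forces (by the Butterfly Lemma together with the vanishing already established at higher layers) $X=(b-1,0;c,d)_\varphi$ and $Y=X^\circ$. Hence the coefficient of $\Delta(C_{p^{b-1}})$ in $aa^\circ$ is the \emph{diagonal} positive form $\sum_X\gamma(X,X^\circ)a_X^2$ with leading term $p^{\,n-b+1}a_{\Delta(C_{p^{b-1}})}^2$; equating it to $2a_{\Delta(C_{p^{b-1}})}$ gives $0\leqslant a_{\Delta(C_{p^{b-1}})}\leqslant 2/p^{\,n-b+1}<1$ for every odd $p$ (including $3$), which kills the entire layer at once --- no cross-terms and no descent are needed. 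Your plan also omits the genuinely non-diagonal step, namely the elements $(c,c;0,0)$ and $(0,0;d,d)$ with trivial section: there one must compute the coefficient of $\Delta(\{1\})$, complete squares to obtain $\bigl(\sum_{k}a_k\bigr)^2+\sum_{i}(p^{n-i}-p^{n-1-i})\bigl(\sum_{j\leqslant i}a_j\bigr)^2=2a_0$, and use $p^n-p^{n-1}>2$; this is precisely where the hypothesis $n>1$ enters for $p=3$. Without these two computations (or a correct substitute for them) the support claim $(\star)$, and hence the proposition, is not established.
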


\begin{proof}
Our strategy starts off similarly to how we approached Proposition~\ref{baseCase}. Using Lemma~\ref{workhorse} $(ii)$ and Lemma~\ref{trivialAction}, we want to show that 
$\mathrm{im}(\rho^\times)=\mathrm{im}(\dBInf_{C_{p^n}/C_p}^{C_{p^n}})$. To accomplish this let $u=\Id_G-\alpha\in \mathrm{im}(\rho^\times)$
with $\alpha\in I_G$ and $\alpha\alpha^\circ=\alpha^\circ \alpha=\alpha+\alpha^\circ$. Moreover, if $\sS_{G\times G}$ is the set of subgroups of $G\times G$, we can write
\[\alpha=\sum_{X\in \sS_{G\times G}}a_XX\]
with $a_X\in \bZ$. By Goursat's Lemma, each $X$ can be encoded as a quintuple $(C_{p^a},C_{p^b},\varphi, C_{p^c},C_{p^d})$ where $0\leq a,c\leq n$, $a-b=c-d>0$ and $\varphi$
is an isomorphism $C_{p^c}/C_{p^d}\cong C_{p^a}/C_{p^b}$. We abbreviate this by $(a,b;c,d)_\varphi$. Our goal is to show that if $b=0$ or $d=0$, then $a_X=0$ for all $X\in \sS_{G\times G}$.
By Lemma~\ref{kernelArgument}, this will imply that there is some $\alpha'\in B(C_{p^n}/C_p,C_{p^n}/C_p)$ such that 
$\alpha=\Inf_{C_{p^n}/C_p}^{C_{p^n}}\circ \alpha'\circ \Def_{C_{p^n}/C_p}^{C_{p^n}}$, where $\alpha'\alpha'^\circ=\alpha'^\circ \alpha'=\alpha'+\alpha'^\circ$.
In other words, $u\in \mathrm{im}(\dBInf_{C_{p^n}/C_p}^{C_{p^n}})$. The result then follows from Lemma~\ref{workhorse} $(iv)$.\\

We begin by writing
\[\alpha\alpha^\circ=\sum_{X\in \sS_{G\times G}}c_XX\]
We will work inductively, first by considering the coefficient $c_{\Delta(C_{p^{n-1}})}$. 
Recall that $\Delta(C_{p^{i}})=(i,0;i,0)\in \sS_{G\times G}$.
Notice that for any $X,Y\in \sS_{G\times G}$, with $X,Y\in I_G$ and $X*Y=\Delta(C_{p^{n-1}})$, 
Lemma~\ref{explicitStarComputation} implies that $X$ is encoded as $(n-1,0;c,d)_\varphi$ and $Y$ is encoded as $(c,d;,n-1,0)_{\varphi^{-1}}$, where $c-d=n-1$. 
In other words, $X^\circ=Y$. This implies that the coefficient $c_{\Delta(C_{p^{n-1}})}$ of $\alpha\alpha^\circ$, is equal to
\[pa_{\Delta(C_{p^{n-1}})}^2+p\sum_{Y}a_Y^2+\sum_{Z}a_Z^2\]
where $Y$ runs over all the elements of $\sS_G$ encoded as $(n-1,0;n-1,0)_\varphi$ with $\varphi$ nontrivial, and $Z$ runs over the elements of $\sS_G$ encoded as $(n-1,0;n,1)_{\psi}$. Since 
$\alpha \alpha^\circ=\alpha+\alpha^\circ$, we have $c_{\Delta(C_{p^{n-1}})}=2a_{\Delta(C_{p^{n-1}})}$. Thus
\[pa_{\Delta(C_{p^{n-1}})}^2+p\sum_{Y}a_Y^2+\sum_{Z}a_Z^2=2a_{\Delta(C_{p^{n-1}})}\geq 0,\]
which implies that $a_{\Delta(C_{p^{n-1}})}\geq 0$. However, this further implies that $\frac{2}{p}\geq a_{\Delta(C_{p^{n-1}})}$. Since $p$ is an odd prime, this forces $a_{\Delta(C_{p^{n-1}})}=0$.
Which forces $a_X=0$ as $X$ runs over all elements of $\sS_{G\times G}$ that can be encoded as $(n-1,0;c,d)_\varphi$.

Dually, since $\alpha\alpha^\circ=\alpha^\circ \alpha$, we also get that $a_X=0$ as $X$ runs over elements $\sS_{G\times G}$ that can be encoded as $(c,d;n-1,0)_\varphi$.

Now we assume that for $X$ encoded as $(b,0;c,d)_\varphi$ or $X$ encoded as $(c,d;b,0)_\psi$, for $1<b\leq n-1$, we have $a_X=0$. Consider now the coefficient $c_{\Delta(C_{p^{b-1}})}$.
As before Proposition~\ref{explicitStarComputation} implies that if $X*Y=(b-1,0;b-1,0)=\Delta(C_{p^{b-1}})$ such that $X$ is encoded as $(b-1,0;c,d)_\varphi$ and $Y$ as $(c,d;,b-1,0)_{\psi}$, then
$X^\circ=Y$. Thus if we compute the coefficient $c_{\Delta(C_{p^{b-1}})}$ using the computation $\alpha\alpha^\circ$, we have
\[p^{n-b+1}a_{\Delta(C_{p^{b-1}})}^2+p^{n-b+1}\sum_{Z_{n-b+1}}a_{Z_{n-b+1}}^2+\cdots+p\sum_{Z_{1}}a_{Z_{1}}^2+\sum_{Z_{0}}a_{Z_{0}}^2=2a_{\Delta(C_{p^{b-1}})}\geq0,\]
where $Z_{n-b+1}$ runs over elements of $\sS_{G\times G}$ which can be encoded by $(b-1,0;b-1,0)_\varphi$ with $\varphi$ nontrivial, and $Z_{i}$ runs through all elements of $\sS_{G\times G}$
which can be encoded as $(b-1,0;c,d)_\psi$, with $c=i$, for $i=0,\dots, n-b$. As in the base case, we must have $\frac{2}{p^{n-b+1}}\geq a_{\Delta(C_{p^{b-1}})}\geq 0$.
This forces $a_{\Delta(C_{p^{b-1}})}=0$ and thus $a_{Z_i}=0$ as we run over all possible $Z_i$ and $i=0,\cdots,n-b+1$.

Considering $\alpha\alpha^\circ=\alpha^\circ \alpha$, inductively speaking we have that $a_X=0$ as $X$ runs through elements of $\sS_{G\times G}$ that can be encoded as $(i,0;c,d)_\varphi$
or $(c,d;i,0)_\psi$ for $i=1,\cdots, n$.

The final step is to consider the coefficients $a_X$ where $X$ is encoded as $(0,0;c,c)$ or $(d,d;0,0)$ for some $0\leq c,d\leq n$ (note we leave off the isomorphism, since it is trivial in this case). 
We show these coefficients are all $0$ as well. To do this, we compute the coefficient $a_{\Delta(\{1\})}$. However, there is a catch! We have proven so far that $a_X=0$ if $P_i(X)\neq K_i(X)=\{1\}$ for $i=1$ or $i=2$,
so if we consider elements $X,Y\in \sS_{G\times G}$ encoded as $(0,0;c,c)$ or $(d,d;,0,0)$, then $X*Y=(0,0;0,0)=\Delta(\{1\})$, as long as $X$ encoded as $(0,0;c,c)$ and $Y$ encoded
as $(d,d;0,0)$ for any $0\leq c,d\leq n$. Moreover, if $X$ is encoded as $(0,0;c,c)$ then $X^\circ$ is encoded as $(c,c;0,0)$. We abbreviate the coefficient
for $(0,0;c,c)$ in $\alpha$ by $a_c$, for all $0\leq c\leq n$. Hence, computing the coefficient $c_{\Delta(\{1\})}$
in $\alpha\alpha^\circ$ gives us
\[p^na_0^2+p^{n-1}a_1^2+\cdots+a_n^2+2\sum_{(i,j)}p^{n-i}a_ia_j\]
as $(i,j)$ runs over pairs $0\leq j<i\leq n$. Thus
\[p^na_0^2+p^{n-1}a_1^2+\cdots+a_n^2+2\sum_{(i,j)}p^{n-i}a_ia_j\]
\[=(a_0+\cdots+a_n)^2+(p^n-1)a_0^2+(p^{n-1}-1)a_1^2+\cdots+(p-1)a_{n-1}^2\]
\[+2\sum_{(l,k)}(p^{n-l}-1)a_la_k\]
\[=(a_0+\cdots+a_n)^2+(p-1)(a_0+\cdots+a_{n-1})^2\]
\[(p^n-p)a_0^2+(p^{n-1}-p)a_1^2+\cdots+(p-p)a_{n-2}^2\]
\[+2\sum_{(r,s)}(p^{n-r}-p)a_ra_s,\]
as $(l,k)$ runs over pairs $0\leq k<l<n$ and $(r,s)$ runs over pairs $0\leq s<r<n-1$

Continuing in this fashion, we get
\[c_{\Delta(\{1\})}=\left(\sum_{k=0}^na_k\right)^2+\sum_{i=0}^{n-1}(p^{n-i}-p^{n-1-i})\left(\sum_{j=0}^ia_j\right)^2.\]
However, we still have that $c_{\Delta(\{1\})}=2a_{\Delta(\{1\})}$. This implies that $a_{\Delta(\{1\})}\geq0$. Moreover, subtracting $(p^n-p^{n-1})a_{\Delta(\{1\})}^2$ from both sides of the equation
\[\left(\sum_{k=0}^na_k\right)^2+\sum_{i=0}^{n-1}(p^{n-i}-p^{n-1-i})\left(\sum_{j=0}^ia_j\right)^2=2a_{\Delta(\{1\})}\]
still leaves the left hand side nonnegative. Thus $2a_{\Delta(\{1\})}-(p^n-p^{n-1})a_{\Delta(\{1\})}^2\geq0\implies \frac{2}{p^{n}-p^{n-1}}\geq a_{\Delta(\{1\})}\geq0$. Since
$p$ is a prime number, we have $a_{\Delta(\{1\})}=0$. Thus the we have $\sum_{j=0}^ia_j=0$ for all $i=0,\dots n$, which implies $a_0=a_1=\cdots=a_n=0$.

Finally, if we repeat the symmetric argument for the product $\alpha^\circ \alpha$, we get that all the coefficients $a_X=0$ for $X\in\sS_{G\times G}$ encoded as $(0,0;c,c)$ or
$(d,d;0,0)$ for any $0\leq c,d\leq n$.  This proves that $\ker(\rho^\times)=\mathrm{im}(\dBInf^{C^{p^n}}_{C^{p^n}/C^{p}})$.

\end{proof}

\begin{proof}[Proof of Theorem~\ref{main2}]
Assume $G$ is cyclic of order $p^n$, where $p$ is a prime.

If $G$ is trivial, then $B(G,G)\cong B(G)\cong \bZ$, and $B(G,G)^\times=B_\circ(G,G)=\{\pm1\}$. If $n=1$, we are done by Proposition~\ref{baseCase}. 

Assume now that $p$ is an odd prime. For $n=k+1$, with $k\geq 1$, Proposition~\ref{inductiveCase} tells us that
\[B_\circ(G,G)=\mathrm{im}(\eta)\times\mathrm{im}(\dBInf_{C_{p^n}/C_p}^{C_{p^n}}).\]
By Lemma~\ref{workhorse} $(ii)$, $\mathrm{im}(\eta)\cong C_2\times\mathrm{Out}(G)$ and by induction we have,
\[\mathrm{im}(\dBInf_{C_{p^n}/C_p}^{C_{p^n}})\cong B_\circ(C_{p^{n-1}},C_{p^{n-1}})\cong \left\{\begin{array}{lll}
	C_2^{n+1}\times\prod_{i=1}^{n-1}\mathrm{Out}(C_{p^i}) &\mathrm{if}& p=3\\\\
	C_2^{n}\times\prod_{i=1}^{n-1}\mathrm{Out}(C_{p^i}) &\mathrm{if}& p>3
	\end{array}\right..\]
This finishes the proof.
\end{proof}


\begin{thebibliography}{10}

\bibitem{Barsotti}
Jamison Barsotti.
\newblock On the unit group of the {Burnside} ring as a biset functor for some
  solvable groups.
\newblock {\em Journal of Algebra}, 508:219--255, 2018.

\bibitem{boltje2012ghost}
Robert Boltje and Susanne Danz.
\newblock A ghost ring for the left-free double burnside ring and an
  application to fusion systems.
\newblock {\em Advances in mathematics}, 229(3):1688--1733, 2012.

\bibitem{boltje2013ghost}
Robert Boltje and Susanne Danz.
\newblock A ghost algebra of the double {B}urnside algebra in characteristic
  zero.
\newblock {\em Journal of pure and applied algebra}, 217(4):608--635, 2013.

\bibitem{boltje2015orthogonal}
Robert Boltje and Philipp Perepelitsky.
\newblock Orthogonal units of the bifree double {B}urnside ring.
\newblock {\em Journal of Pure and Applied Algebra}, 219(1):47--58, 2015.

\bibitem{BoucUnits}
Serge Bouc.
\newblock The functor of units of {Burnside} rings for $p$-groups.
\newblock {\em Comm. Math. Helv.}, 82(3):583--616, 2007.

\bibitem{BoucBook}
Serge Bouc.
\newblock {\em Biset functors for finite groups}.
\newblock Springer, 2010.

\bibitem{masterson2018table}
Brendan Masterson and G{\"o}tz Pfeiffer.
\newblock On the table of marks of a direct product of finite groups.
\newblock {\em Journal of Algebra}, 499:610--644, 2018.

\bibitem{perepelitsky2014p}
Philipp~Naum Perepelitsky.
\newblock {\em p-Permutation equivalences between blocks of finite groups}.
\newblock PhD thesis, UC Santa Cruz, 2014.

\bibitem{ragnarsson2013saturated}
K{\'a}ri Ragnarsson and Radu Stancu.
\newblock Saturated fusion systems as idempotents in the double {B}urnside
  ring.
\newblock {\em Geometry \& Topology}, 17(2):839--904, 2013.

\bibitem{YoshidaUnits}
Tomoyuki Yoshida.
\newblock On the unit groups of {Burnside} rings.
\newblock {\em Journal of the Mathematical Society of Japan}, 42(1):31--64,
  1990.

\end{thebibliography}
\end{document}